\documentclass[12pt]{article}
\title{Bulk scaling limit of the Laguerre ensemble}
\date{}
\author{St\' ephanie Jacquot\footnote{
University of Cambridge, Statistical Laboratory, Centre for Mathematical Sciences, Wilberforce Road, Cambridge, CB3 0WB, UK.   S.M.Jacquot@statslab.cam.ac.uk}\qquad Benedek Valk\'o\footnote{Department of Mathematics, University of Wisconsin - Madison, WI 53705, USA.  valko@math.wisc.edu B.Valk\'o was partially supported by the NSF Grant DMS-09-05820.}}

\oddsidemargin 0in \topmargin 0in \headheight 0in \headsep 0in
\textheight 9in \textwidth 6.5in

\usepackage{units}
\usepackage{amsfonts}
\usepackage{graphicx}
\usepackage{amsmath}
\usepackage{amsthm}
\usepackage{color}
    \newtheorem{theorem}{Theorem}
\newtheorem*{theorem*}{Theorem}
    \newtheorem{lemma}[theorem]{Lemma}
    \newtheorem{proposition}[theorem]{Proposition}
    \newtheorem{corollary}[theorem]{Corollary}

\theoremstyle{definition} 
    
    \newtheorem{remark}[theorem]{Remark}

\newcommand{\eps}{\varepsilon}
\newcommand{\Z}{{\mathbb Z}}
\newcommand{\ZZ}{{\mathbb Z}}
\newcommand{\FF}{{\mathcal F}}
\newcommand{\UU}{{\mathbb U}}
\newcommand{\R}{{\mathbb R}}
\newcommand{\CC}{{\mathbb C}}

\newcommand{\HH}{{\mathbb H} }
\newcommand{\ev}{{\mathbb   E}}
\newcommand{\pr}{\mbox{\rm P}}
\newcommand{\lstar}{{\raise-0.15ex\hbox{$\scriptstyle \ast$}}}
\newcommand{\lcirc}{{\raise-0.15ex\hbox{$\scriptscriptstyle \circ$}}}
\newcommand{\ldot}{.}
\newcommand{\vfi}{\varphi}
\theoremstyle{remark} 
\newcommand{\Sineb}{\operatorname{Sine}_{\beta}}
\newcommand{\Airyb}{\operatorname{Airy}_{\beta}}

\newcommand{\half}{\nicefrac{1}{2}\,}
\newcommand{\qt}{\nicefrac{1}{4}\,}
\newcommand{\tqt}{\nicefrac{3}{4}\,}
\newcommand{\gl}{\lambda}
\newcommand{\cp}{\stackrel{P}{\longrightarrow}}

\renewcommand{\O}{\mathcal{O}}
\definecolor{violet}{rgb}{0.8,0,0.2}
\newcommand{\cd}{\stackrel{d}{\Longrightarrow}}

\newcommand{\mbf}[1]{\mathbf{#1}}
\newcommand{\ash}{\operatorname{ash}}
\bibliographystyle{plain}
\begin{document}
\maketitle
\begin{abstract}
We consider the $\beta$-Laguerre ensemble, a family of distributions generalizing the joint eigenvalue distribution of the Wishart random matrices. We show that  the  bulk scaling limit of  these ensembles  exists for all $\beta>0$ for a general family of parameters and it  is the same as the bulk scaling limit of the corresponding  $\beta$-Hermite ensemble.
\end{abstract}
\definecolor{Red}{rgb}{1,0,0}

\section{Introduction}
The Wishart-ensemble is one of the first studied random matrix models, introduced by Wishart in 1928 \cite{Wishart}. It describes the joint eigenvalue distribution of the $n\times n$ random symmetric matrix $M=A A^*$ where $A$ is an $n\times (m+1)$  matrix with i.i.d.~standard normal entries. One can also define versions with i.i.d.~complex or real quaternion standard normal random variables. Since we are only interested in the eigenvalues, we can assume $m+1\ge n$. Then the joint eigenvalue density on $\R_+^n$ exists and it is given by the following formula:
\begin{equation}\label{eq:Lag}
\frac{1}{Z_{n,m+1}^\beta}\prod_{j<k}
|\lambda_j-\lambda_k|^{\beta}\, \prod_{k=1}^n \lambda_k^{\frac{\beta}{2}(m-n)-1}  e^{-\frac{\beta}{2} \lambda_k}
\end{equation}
where $\beta=1,2$ and 4 correspond to the real, complex and quaternion cases respectively and $Z_{n,m+1}^\beta$ is an explicitly computable constant.

 The density (\ref{eq:Lag}) defines a distribution on $\R_+^n$  for any $\beta>0$, $n\in \mathbb{N}$ and $m>n$. The resulting family of distributions is called the $\beta$-Laguerre ensemble. Note that we intentionally shifted the parameter $m$  by one as this will result in slightly cleaner expressions later on.

Another important family of distributions in random matrix theory is the Hermite (or Gaussian) $\beta$-ensemble. It is described by the density function
\begin{eqnarray}\label{eq:Herm}
\frac{1}{\tilde Z_n^\beta}\prod_{1\le j<k \le n}
|\lambda_j-\lambda_k|^{\beta}\, \prod_{k=1}^n e^{-\frac{\beta}{4} \lambda_k^2}.
\end{eqnarray}
on $\R^n$.
For $\beta=1,2$ and $4$ this gives the joint eigenvalue density of the Gaussian orthogonal, unitary and symplectic ensembles. It is known that if we rescale the ensemble by $\sqrt{n}$ then the empirical spectral density converges to the Wigner semicircle distribution $\frac{1}{2\pi}\sqrt{4-x^2} 1_{[-2,2]}$.
 In \cite{carousel} the authors  derive the bulk scaling limit of the $\beta$-Hermite ensemble, i.e.~the point process limit of the spectrum it is  scaled around a sequence of points away from the edges. 
\begin{theorem}[Valk\'o and Vir\' ag \cite{carousel}]\label{thm:carousel}
 If $\mu_n$ satisfies $n^{1/6}(2\sqrt{n}-|\mu_n|)\to \infty$ as $n\to \infty$  and $\Lambda_n^{H}$ is a sequence of random vectors with density (\ref{eq:Herm}) then
\begin{equation}
\sqrt{4n-\mu_n^2} (\Lambda_n^H-\mu_n)\Rightarrow \Sineb
\end{equation}
where $\Sineb$ is a discrete point process with density $(2\pi)^{-1}$.
\end{theorem}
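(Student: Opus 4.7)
The plan is to reduce the problem to a tractable matrix representation via the Dumitriu--Edelman tridiagonalization, which realizes the density \eqref{eq:Herm} as the spectrum of a symmetric tridiagonal matrix $H_n^\beta$ with independent Gaussian diagonal entries and independent chi-distributed subdiagonal entries (with parameters decreasing along the diagonal). The eigenvalue equation $H_n^\beta \psi = \lambda \psi$ is then a three-term recursion for the components of $\psi$, which converts, via a Prüfer-type angular change of variable, into a first-order stochastic recursion for phase variables $\varphi_k^{(x)}$ indexed by the row $k$ and parametrized by the shifted spectral point $\lambda = \mu_n + x/\sqrt{4n-\mu_n^2}$.

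First I would write the recursion for $\varphi_k^{(x)}$ in the upper half-plane and expand the one-step increment as a deterministic drift (driven by the local mean of the chi entries and by the spectral parameter relative to the semicircle) plus a martingale increment of order $n^{-1/2}$. Under the time rescaling $k = \lfloor tn\rfloor$ with $t\in [0,1]$, I would then show that the processes $t\mapsto \varphi_{\lfloor tn\rfloor}^{(x)}$ converge jointly in $x$ over compact sets to a coupled diffusion on the hyperbolic plane -- the Brownian carousel -- driven by a single complex Brownian motion whose drift depends linearly on $x$. The convergence follows from a standard martingale functional central limit theorem, once tightness and the drift/quadratic variation expansions have been established uniformly in $x$ and in $t$ bounded away from $1$.

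Second, I would extract the point process limit via oscillation theory. The number of eigenvalues of $H_n^\beta$ below $\lambda$ equals, up to a deterministic integer shift, the total winding number of $k\mapsto \varphi_k^{(x)}$ as $k$ runs from $0$ to $n$. Passing this identity to the limit and using continuity of the winding number under uniform convergence of paths, the rescaled eigenvalue point process converges in distribution to the zero set of the limiting phase at time $t=1$ considered as a function of $x$. By construction, this is the definition of $\Sineb$, and the assertion that its mean density is $(2\pi)^{-1}$ follows from computing the mean instantaneous rotation speed of the carousel at $t=1$.

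The main obstacle is controlling the Prüfer recursion uniformly in $x$ near the two ends of the matrix, namely for the first few and last few rows, where the chi-distributed entries have either very small or very large parameter and the one-step Gaussian expansion becomes inaccurate. This is exactly where the hypothesis $n^{1/6}(2\sqrt n - |\mu_n|)\to \infty$ enters: it guarantees that $\mu_n$ sits sufficiently far inside the bulk so that the natural scale $\sqrt{4n-\mu_n^2}$ dominates the edge contributions and the oscillation count survives the limit. Without it, the correct scaling limit would instead be the edge $\Airyb$ process rather than $\Sineb$.
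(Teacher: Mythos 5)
This theorem is cited from \cite{carousel}, not reproved here; the present paper only adapts its proof strategy to the Laguerre case (Theorem \ref{thm:second}). Your sketch captures the right high-level plan --- tridiagonalization, a Pr\"ufer-type phase recursion, an SDE/Brownian carousel limit after rescaling time by $n$, and eigenvalue counting via oscillation theory --- and this is indeed the method of \cite{carousel} as well as of Section~\ref{s:proof} of this paper. But two structural points are wrong or missing, and they matter.

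First, the SDE limit does \emph{not} hold on all of $[0,1]$: the diffusion coefficient blows up like $(1-t)^{-1}$ as $t\to 1$, so one only obtains process convergence on $[0,1-\eps]$, and your appeal to ``continuity of the winding number under uniform convergence of paths'' cannot close the argument, since the winding is not determined at time $1-\eps$. Second, and relatedly, the eigenvalue count is not a winding number of a single forward phase function run from $0$ to $n$. One needs a second, independent \emph{target} phase function $\vfi^{\odot}_{\ell,\lambda}$ obtained by running the recursion backwards from $\ell=n$, and the counting identity is in terms of the difference $\vfi_{\ell,\lambda}-\vfi^{\odot}_{\ell,\lambda}$ evaluated at a single interior index $\ell$ (cf.\ (\ref{evcount}), (\ref{evcount1})). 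The proof then splits the index range into three regimes --- an initial stretch where the forward phase converges to the carousel SDE, a middle stretch where a Gronwall-type estimate shows the \emph{relative} phase $\alpha_{\ell,\lambda}=\vfi_{\ell,\lambda}-\vfi_{\ell,0}$ barely moves and $\vfi_{\ell,0}$ equidistributes mod $2\pi$, and a last stretch controlled from the other end where $\alpha^{\odot}$ loses its $\lambda$-dependence. Without this decomposition and the bidirectional recursion, the argument has a genuine gap: you cannot pass the count to the limit. You also omit the regularization step (conjugating away the rotation part of the transfer operators) which is needed to make the one-step increments genuinely $O(n^{-1/2})$ and the phase function asymptotically continuous --- without it the drift/quadratic-variation expansion you invoke does not hold.

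Your reading of the hypothesis $n^{1/6}(2\sqrt n-|\mu_n|)\to\infty$ is essentially correct: it keeps $\mu_n$ far enough from the edge that the bulk scaling dominates, and it is exactly what makes the middle-stretch/last-stretch error terms vanish.
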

Note that the condition on $\mu_n$  means that we are in the bulk of the spectrum, not too close to the edge.
 The limiting point process $\Sineb$ can be described as a functional  of the Brownian motion in the hyperbolic plane or equivalently via a system of stochastic differential equations (see Subsection \ref{subs:sineb} for details).

The main result of the present paper provides the point process limit of the Laguerre ensemble in the bulk. In order to understand the order of the scaling parameters, we first recall the classical results about the limit of the empirical spectral measure for the Wishart matrices.  If  $m/n\to \gamma \in[1,\infty)$ then with probability one the scaled empirical spectral measures $\nu_n=\frac1n \sum_{k=1}^n \delta_{\lambda_k/n}$ converge weakly to the Marchenko-Pastur distribution which is a deterministic measure with density
\begin{equation}\label{eq:MP}
\tilde \sigma^\gamma(x)=\frac{\sqrt{(x-a^2)(b^2-x)}}{2\pi x } 1_{[a^2,b^2]}(x), \qquad a=a(\gamma)=\gamma^{1/2}-1, \,\,b=b(\gamma)=1+\gamma^{1/2}.
\end{equation}
This can be proved  by the moment method or using Stieltjes-transform. (See \cite{MP} for the original proof and \cite{ForBook} for the general $\beta$ case). 

Now we are ready to state our main theorem:
\begin{theorem}[Bulk limit of the Laguerre ensemble]\label{thm:main}
Fix $\beta>0$, assume that $m/n\to \gamma \in [1,\infty)$ and let $ c \in (a^2, b^2)$. Let $\Lambda_n^L$ denote the point process given by (\ref{eq:Lag}). Then
\begin{equation}
2\pi \tilde \sigma^\gamma(c) \left(\Lambda_n^L-c n\right)\Rightarrow \Sineb
\end{equation}
where $\Sineb$ is the bulk scaling limit of the $\beta$-Hermite ensemble.
\end{theorem}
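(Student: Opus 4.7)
The plan is to follow the strategy that Valk\'o and Vir\'ag used to prove Theorem \ref{thm:carousel}, namely replace the Laguerre ensemble with a tridiagonal matrix model, encode its eigenvalue equation via a hyperbolic Pr\"ufer phase, and show that the phase process converges to the same stochastic dynamics that defines $\Sineb$. The key preliminary is the Dumitriu--Edelman tridiagonalization: (\ref{eq:Lag}) is the joint law of the eigenvalues of $J_{n,m} = B_{n,m} B_{n,m}^*$, where $B_{n,m}$ is an explicit $n\times n$ bidiagonal matrix with independent chi-distributed diagonal and subdiagonal entries whose degrees of freedom depend on $\beta$, $m$, and the row index. The resulting Jacobi matrix $J_{n,m}$ has entries that are independent (across non-adjacent rows) explicit functions of these chi variables.

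Next, I would fix a rescaled spectral parameter $\lambda = cn + x/(2\pi\tilde\sigma^\gamma(c))$ and analyze the second-order recursion $(J_{n,m}-\lambda) v = 0$ via a transfer matrix acting on the upper half plane. Writing $\varphi_k(\lambda)$ for the associated hyperbolic Pr\"ufer angle at step $k$, the eigenvalues of $J_{n,m}$ below $\lambda$ are counted by $\lfloor \varphi_n(\lambda)/\pi\rfloor$, so point process convergence of $2\pi\tilde\sigma^\gamma(c)(\Lambda_n^L-cn)$ reduces to joint weak convergence of the continuous phase field $x \mapsto \varphi_n(\lambda(x))$ on compacts. After time-changing the index $k = \lfloor nt\rfloor$ with $t\in[0,1]$ and a first-order expansion of the chi entries around their means, one gets a discrete stochastic recursion for $\varphi_{\lfloor nt\rfloor}$ whose drift and noise coefficients should match, in the scaling limit, the Brownian carousel SDE for $\Sineb$ recalled in Subsection \ref{subs:sineb}. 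The factor $2\pi\tilde\sigma^\gamma(c)$ is dictated by the requirement that the local eigenvalue spacing be normalized to the universal $\Sineb$ intensity $1/(2\pi)$; concretely, it is the derivative of the classical position with respect to the time parameter $t$ at the bulk point $c$, evaluated from the Marchenko--Pastur density (\ref{eq:MP}).

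The main obstacle will be the drift/diffusion matching. Unlike the Hermite tridiagonal, where the diagonal and off-diagonal entries are a single centered Gaussian and a single chi variable per row, the entries of $J_{n,m}=B_{n,m}B_{n,m}^*$ are sums and products of two adjacent bidiagonal entries, so their means and variances involve more delicate expansions in the row index. One must Taylor-expand these moments around the bulk point $cn$, verify that the Marchenko--Pastur density is the precise combination under which all ensemble-specific corrections cancel, and check that no anomalous drift survives in the limit. The hypotheses $m/n\to\gamma\in[1,\infty)$ and $c\in(a^2,b^2)$ keep the coefficients bounded and non-degenerate throughout $[0,1]$, which should allow the tightness, oscillation, and martingale CLT arguments of \cite{carousel} to be transplanted essentially verbatim once the coefficient matching has been carried out.
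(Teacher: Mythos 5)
Your outline of the overall strategy—tridiagonalize, introduce a hyperbolic Pr\"ufer phase, rescale to an SDE matching the carousel dynamics—is the right template, and your heuristic for the scaling constant $2\pi\tilde\sigma^\gamma(c)$ is correct. But the specific route you propose, working directly with the Jacobi matrix $J_{n,m}=B_{n,m}B_{n,m}^*$ and a single-index phase recursion, hits an obstruction that the paper is explicitly designed to avoid, and your proposal does not supply the idea needed to get around it.

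The entries of $J_{n,m}$ are \emph{dependent} across adjacent rows: writing $B$ with diagonal entries $a_i$ and subdiagonal entries $b_i$, the diagonal of $J$ is $a_i^2+b_{i-1}^2$ and the off-diagonal is $a_i b_i$, so the $\ell$-th and $(\ell+1)$-th steps of the three-term recursion share the chi variables $a_\ell,b_\ell,b_{\ell-1}$. Consequently the ratio/phase process $\varphi_\ell$ is \emph{not} a Markov chain with respect to the filtration $\sigma(\varphi_1,\dots,\varphi_\ell)$, and the single-step conditional drift/variance expansions, the averaging lemma, and the diffusion limit theorem (Proposition~\ref{p_turboEK}) all lose their footing. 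Your remark that the moments ``involve more delicate expansions'' understates the problem: it is not merely a harder Taylor expansion but a breakdown of the Markov structure on which the whole machine rests. The paper's solution is the doubling trick (Lemma~1 and the representation~(\ref{eq:matrix})): instead of $J_{n,m}$ it passes to the $2n\times 2n$ zero-diagonal tridiagonal matrix $\tilde A_{n,m}$ whose off-diagonal entries are the \emph{individual} chi variables, hence independent modulo symmetry. The bulk scaling of the singular values around $\mu_n\approx\sqrt{cn}$ is then squared at the end to recover the statement for $\Lambda_n^L$. This is not a cosmetic reformulation; it is the key structural step, and without it (or an equivalent state-space enlargement such as tracking a pair of consecutive ratios) your plan does not go through as described.

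Two further points you pass over, both of which require real work in the paper: (i) the natural time horizon for the elliptic/rotational regime of the recursion is not $\ell\in[0,n]$ but $\ell\in[0,n_0)$ with $n_0\approx\frac{\pi^2}{4}n\,\sigma^{m/n}(\mu_n/\sqrt n)^2$; past $n_0$ the one-step conjugating fixed point $\rho_\ell$ leaves the upper half plane and the ``rotation'' picture fails, so the recursion must be split and the tail handled separately; and (ii) counting eigenvalues by $\lfloor\varphi_n(\lambda)/\pi\rfloor$ alone is not what the argument does—one needs both the forward phase and an independent \emph{target} phase run from the far end, together with a uniform-modulo-$2\pi$ statement for the left endpoint and a proof that the target phase loses its $\lambda$-dependence. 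That last step in the paper actually invokes the soft-edge limit of \cite{RRV} and is not a verbatim transplant of anything in \cite{carousel}.
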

We will actually prove a more general version of this theorem: we will also allow the cases when $m/n\to \infty$ or when the center of the scaling  gets close to the spectral edge. See Theorem \ref{thm:second} in Subsection \ref{subs:thm} for the details.

Although this statement has been known for the classical cases ($\beta=1,2$ and 4) \cite{mehta}, this is the first proof for general $\beta$.  Our approach relies on the tridiagonal matrix representation of the Laguerre ensemble introduced by Dumitriu and Edelman \cite{DE} and the techniques introduced in \cite{carousel}.  

There are various other ways one can generalize the classical Wishart ensembles. One possibility is that instead of  normal random variables one uses  more general distributions
 in the construction described at the beginning of this section. The recent papers of Tao and Vu \cite{TV_wishart} and Erd\H{o}s et al.~\cite{ESYY_wishart} provide the bulk scaling limit in these cases.

Our theorem completes the picture about the point process scaling limits of the Laguerre ensemble. The scaling limit at the soft edge has been proved in \cite{RRV}, where the edge limit of the Hermite ensemble was also treated.
\begin{theorem}[Ram\'\i rez, Rider and Vir\'ag \cite{RRV}]\label{thm:softedge}
If $m>n\to \infty$
then
\[
\frac{(mn)^{1/6}}{(\sqrt{m}+\sqrt{n})^{4/3}}(\Lambda_n^L-(\sqrt{n}+\sqrt{m})^2)\Rightarrow \Airyb
\]
where $\Airyb$ is a discrete simple point process given by the eigenvalues of the stochastic Airy operator
\[
\mathcal H_\beta=-\frac{d^2}{dx^2}+x+\frac{2}{\sqrt{\beta}} b_x'.
\]
Here $b'_x$ is white noise and the eigenvalue problem is set up on the positive half line with initial conditions $f(0)=0, f'(0)=1$.

A similar limit holds at the lower edge: if $\liminf m/n>1$ then
\[
\frac{(mn)^{1/6}}{(\sqrt{m}-\sqrt{n})^{4/3}}((\sqrt{m}-\sqrt{n})^2-\Lambda_n^L)\Rightarrow \Airyb.
\]
\end{theorem}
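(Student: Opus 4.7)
The plan is to follow the variational strategy of Ram\'\i rez, Rider and Vir\'ag: realize the ensemble through its Dumitriu-Edelman tridiagonal model, zoom in at the appropriate corner of the matrix, and prove convergence of the rescaled discrete operator to the stochastic Airy operator $\mathcal H_\beta$ in a sense strong enough to yield convergence of any fixed number of extremal eigenvalues.

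Concretely, $\Lambda_n^L$ is the joint law of the eigenvalues of $J_n = B_n B_n^T$, where $B_n$ is lower bidiagonal with independent entries $(B_n)_{ii} = \beta^{-1/2}\chi_{\beta(m-i+1)}$ and $(B_n)_{i+1,i} = \beta^{-1/2}\chi_{\beta(n-i)}$. For the upper edge, the extremal eigenvalues come from singular vectors of $B_n$ localised near $i = 1$. Using the expansion $\chi_k^2 = k + \sqrt{2k}\, g_k + O(1)$ with $g_k$ approximately standard normal, one finds that under the scaling
\[
i - 1 = t\cdot (mn)^{-1/3}(\sqrt{m}+\sqrt{n})^{4/3}, \qquad \lambda = (\sqrt{m}+\sqrt{n})^2 + \tau\cdot (mn)^{-1/6}(\sqrt{m}+\sqrt{n})^{4/3},
\]
the eigenvalue equation for $J_n$, after conjugation by slowly-varying weights designed to symmetrise the off-diagonal coefficients, becomes a discrete approximation of
\[
-f''(t) + t\, f(t) + \tfrac{2}{\sqrt{\beta}}\, b'_t\, f(t) = \tau\, f(t)
\]
on $[0,\infty)$ with Dirichlet boundary at the origin. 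The linear-in-$t$ potential comes from the deterministic linear decrease of the two chi parameters as $i$ increases, while the coefficient $\tfrac{2}{\sqrt{\beta}}$ of the white-noise potential arises by aggregating the centred chi fluctuations of the diagonal and subdiagonal entries of each row, via a functional CLT.

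To upgrade this formal limit to genuine eigenvalue convergence, I would use a Rayleigh-Ritz comparison between the quadratic forms of the rescaled $J_n$ and of $\mathcal H_\beta$: the upper bound on the $\limsup$ of the top $k$ rescaled eigenvalues is obtained by discretising the $k$-th eigenfunction of $\mathcal H_\beta$ and plugging it into the discrete form, while the matching lower bound requires a priori exponential-decay estimates for approximate eigenvectors of $J_n$ (so that no mass escapes to $t = +\infty$), followed by extraction of a weak subsequential limit in a suitable Sobolev-type space and identification of this limit with a bona fide $\mathcal H_\beta$-eigenfunction. The lower-edge statement is handled analogously, by expanding around the bottom corner of $B_n$; the hypothesis $\liminf m/n > 1$ is what guarantees that the chi parameters at the bottom still have order $n$, so that the edge is soft (Airy-type) rather than hard (Bessel-type).

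The main obstacle is the rigorous construction and spectral theory of $\mathcal H_\beta$. Since $b'_t$ is only a distribution, the operator must be defined through its quadratic form by integrating the white noise once by parts against Brownian motion $b_t$, and one has to show it is self-adjoint with a discrete spectrum bounded below that admits Weyl-type asymptotics. Once these analytic foundations are in place, the remaining ingredients --- concentration of the chi variables, decay estimates for near-ground-state eigenvectors, tightness of the rescaled spectra, and joint convergence of finitely many extremal eigenvalues --- are comparatively standard extensions of the Hermite edge argument to the two-parameter Laguerre setting.
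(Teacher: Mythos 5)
This theorem is not proved in the paper: it is quoted directly from Ram\'\i rez, Rider and Vir\'ag \cite{RRV}, and the paper's only original contribution to it is the remark that the lower-edge statement follows from the upper-edge argument of \cite{RRV} by a straightforward modification. Your sketch is a faithful outline of the \cite{RRV} strategy (tridiagonal Dumitriu--Edelman model, corner zoom, convergence of quadratic forms to the stochastic Airy operator defined variationally via an integration by parts against Brownian motion, Rayleigh--Ritz plus a priori decay estimates to turn form convergence into eigenvalue convergence), and your explanation of why $\liminf m/n>1$ keeps the lower edge soft rather than hard is exactly the point the paper's remark is making.

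There is, however, one concrete error in your sketch that would derail a literal execution: the spatial scaling. You write $i-1 = t\cdot (mn)^{-1/3}(\sqrt m+\sqrt n)^{4/3}$, but for $m/n\to\gamma\in[1,\infty)$ this quantity is $O(1)$, not $O(n^{1/3})$, so no nontrivial continuum limit can emerge from it. The correct spatial scale is $\frac{(mn)^{1/3}}{(\sqrt m+\sqrt n)^{2/3}}\sim n^{1/3}$, as recorded in Section~\ref{s:last} of this paper (citing \cite{RRV}, Section~5); it is determined by balancing the second-difference term $\sqrt{mn}\cdot L^{-2}$ against both the linear drift of the diagonal in $i$ and the stated eigenvalue fluctuation scale $(mn)^{-1/6}(\sqrt m+\sqrt n)^{4/3}$. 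Once the spatial scale is corrected, the rest of your plan is indeed the route taken by \cite{RRV}.
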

\begin{remark}
The lower edge result is not stated explicitly in \cite{RRV}, but it follows by a straightforward modification of the proof of the upper edge statement. Note that the condition $\liminf m/n>1$ is not optimal, the statement is expected to hold with $m-n\to \infty$. This has been known for the classical cases $\beta=1,2,4$ \cite{mehta}.
\end{remark}
If $m-n\to a\in (0,\infty)$ then the lower edge of the spectrum is pushed to 0 and it becomes a `hard' edge. The scaling limit in this case was proved in \cite{RR}.
\begin{theorem}[Ram\'\i rez and Rider \cite{RR}]
If $m-n\to a \in (0,\infty)$ then
\[
n \Lambda_n^L\Rightarrow \Theta_{\beta,a}
\]
where $\Theta_{\beta,a}$ is a simple point process that can be described as the sequence  of eigenvalues of a certain random operator.
\end{theorem}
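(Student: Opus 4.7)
The strategy is operator-theoretic: use the Dumitriu--Edelman bidiagonal representation to realize $\Lambda_n^L$ as the spectrum of an explicit random matrix, and show that after scaling by $n$, the \emph{inverse} of this matrix converges in Hilbert--Schmidt norm to the inverse of a limiting random Sturm--Liouville operator whose spectrum is, by definition, the point process $\Theta_{\beta,a}$.

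\textbf{Bidiagonal model and reordering.} The Dumitriu--Edelman construction gives $\Lambda_n^L\stackrel{d}{=}\operatorname{spec}(B_n B_n^T)$, where $B_n$ is lower bidiagonal with independent diagonal entries $\chi_{\beta(m-k+1)}/\sqrt{\beta}$ and subdiagonal entries $\chi_{\beta(n-k)}/\sqrt{\beta}$, for $k=1,\dots,n$. Since the hard edge concerns the smallest eigenvalues, we reverse indices so that the smallest $\chi$'s sit near the top of the matrix: after reversal, for an index $k$ bounded as $n\to\infty$ the subdiagonal parameter is $\beta k$ and the diagonal parameter is $\beta(a+k)+o(1)$, using $m-n\to a$.

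\textbf{Continuum limit of the Green's function.} Setting $y=k/n\in(0,1]$, the matrix $nB_nB_n^T$ is a discrete analogue of a second-order differential operator, but it is unbounded, so we work instead with its compact inverse. The bidiagonal form makes $B_n^{-1}$ explicit: its kernel is a lower-triangular array of cumulative products of ratios $\chi_{\beta(n-j)}/\chi_{\beta(m-j+1)}$. Decomposing each $\log(\chi^2_{\beta N}/(\beta N))$ into a deterministic correction and a centered martingale, and applying a functional CLT, these cumulative products converge to stochastic exponentials of Brownian motions with drift. Embedding $\ell^2_n$ into $L^2((0,1])$ via step functions, the discrete Green's kernel of $nB_nB_n^T$ then converges in Hilbert--Schmidt norm to the Green's kernel of a random operator of the schematic form
\[
\mathcal{G}_{\beta,a}f \;=\; -\frac{1}{m(y)}\Bigl(\frac{f'(y)}{s(y)}\Bigr)'
\]
with a Dirichlet condition at $y=0$ and coefficients $m,s$ built from two independent Brownian motions with drifts determined by $a$ and $\beta$. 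Hilbert--Schmidt convergence of compact nonnegative self-adjoint inverses yields joint convergence of all eigenvalues with multiplicity, which is precisely the conclusion $n\Lambda_n^L\Rightarrow\Theta_{\beta,a}$.

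\textbf{Main obstacle.} The principal difficulty is the uniform tail control needed to upgrade a finite-dimensional martingale CLT to Hilbert--Schmidt convergence of Green's kernels: the kernel involves a product of a growing number of near-unit independent factors, and one must prove uniform-in-$n$ square-integrability of these cumulative products globally on $(0,1]$, together with a corresponding square-integrability of the limiting stochastic exponentials. The strictly positive hard-edge parameter $a>0$ enters precisely here, providing the drift that makes these products integrable near $y=0$; the marginal case $a=0$ would require a separate, more delicate treatment because there the lower spectral edge ceases to be strictly hard.
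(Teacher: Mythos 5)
This theorem is quoted in the paper purely as background, citing Ram\'\i rez and Rider \cite{RR}; the present paper gives no proof of it, and the hard-edge case plays no role in the bulk-limit arguments that follow. Your sketch correctly reproduces the strategy of the cited reference: invert the Dumitriu--Edelman bidiagonal factor to obtain an explicit lower-triangular Green's kernel built from cumulative $\chi$-ratio products, prove a functional CLT for those products after centering the logarithms, upgrade to Hilbert--Schmidt convergence of the (compact, nonnegative) inverse operators embedded in $L^2$, and read off the limiting random Sturm--Liouville (stochastic Bessel) operator whose spectrum defines $\Theta_{\beta,a}$. Your identification of uniform-in-$n$ square-integrability of the cumulative products as the technical crux, and of $a>0$ as the source of the stabilizing drift near the hard edge, is also apt; the one small caveat is that after taking logs the two independent $\chi$-sequences combine into a \emph{single} effective Brownian motion with drift in the limiting coefficient, rather than two independent ones, but this does not affect the structure of the argument.
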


%

In the next section we discuss the tridiagonal representation of the Laguerre ensemble, recall how to count eigenvalues of a tridiagonal matrix and state a more general version of our theorem. Section \ref{s:proof} will contain the outline of the proof while the rest of the paper deals with the details of the proof.

\section{Preparatory steps}

\subsection{Tridiagonal representation}

In \cite{DE} Dumitriu and Edelman proved that the $\beta$-Laguerre ensemble can be represented as joint eigenvalue distributions for certain random tridiagonal matrices.
Let $A_{n,m}$ be the following $n\times n$ bidiagonal matrix:
\[
A_{n,m}=\frac{1}{\sqrt{\beta}}\left[
\begin{array}{ccccc}
\tilde \chi_{\beta (m-1)}&&&&\\
\chi_{\beta(n-1)}&\tilde \chi_{\beta(m-2)}&&&\\
&\ddots&\ddots&&\\
&&\chi_{\beta\cdot 2}&\tilde \chi_{\beta (m-n+1)}&\\
&&&\chi_\beta&\tilde \chi_{\beta(m-n)}
\end{array}
     \right].
\]
where $\chi_{\beta a}, \tilde \chi_{\beta b}$  are independent chi-distributed random variables with the appropriate parameters ($1\le a\le n-1, m-1\le b\le m-n$).
Then the eigenvalues of the tridiagonal matrix $A_{n,m} A_{n,m}^T$ are distributed according to the density (\ref{eq:Lag}).

If we want to find the bulk scaling limit of the eigenvalues of $A_{n,m} A_{n,m}^T$ then it is sufficient to understand the scaling limit of the singular values of $A_{n,m}$.The   following simple lemma will be a useful tool for this.
\begin{lemma}
Suppose that $B$ is an $n \times n$ bidiagonal matrix with $a_1, a_2, \dots, a_n$ in the diagonal and $b_1, b_2, \dots, b_{n-1}$ below the diagonal. Consider the $2n\times 2n$ symmetric tridiagonal matrix
$M$ which has zeros in the main diagonal and $a_1, b_1, a_2, b_2, \dots, a_n$ in the off-diagonal. If the singular values of $B$ are $\lambda_1, \lambda_2, \dots, \lambda_n$ then the eigenvalues of $M$ are $\pm \lambda_i, i=1\dots n$.
\end{lemma}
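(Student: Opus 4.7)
The plan is to realize $M$ as a permutation-similarity of the standard block dilation
\[
N=\begin{pmatrix} 0 & B \\ B^T & 0 \end{pmatrix},
\]
whose spectrum is well known to be $\{\pm \lambda_i : i=1,\dots,n\}$. Step one: write the SVD $B=U\Sigma V^T$ with singular values $\lambda_i$, left singular vectors $u_i$, and right singular vectors $v_i$. A direct computation gives
\[
N\begin{pmatrix} u_i \\ \pm v_i\end{pmatrix}
=\begin{pmatrix} \pm B v_i \\ B^T u_i\end{pmatrix}
=\pm\lambda_i\begin{pmatrix} u_i \\ \pm v_i\end{pmatrix},
\]
and the $2n$ vectors so constructed are orthogonal and span $\R^{2n}$, hence they exhaust the spectrum of $N$.

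Step two: exhibit the permutation matrix $P$ that interlaces the two blocks, namely the one sending the $k$-th standard basis vector of the top block to position $2k-1$ and the $k$-th standard basis vector of the bottom block to position $2k$, for $k=1,\dots,n$. Setting $M' = PNP^T$, I would check the nonzero entries by cases. Since $B$ is lower bidiagonal with $B_{i,i}=a_i$ and $B_{i+1,i}=b_i$, the only nonzero entries of $N$ sit at positions $(i, n+i)$ and $(i+1, n+i)$ (plus their symmetric counterparts). Under $P$ these get mapped to $(2i-1, 2i)$ and $(2i+1, 2i)$, with values $a_i$ and $b_i$ respectively. This is precisely the off-diagonal pattern $a_1, b_1, a_2, b_2, \dots, a_n$ defining $M$, so $M=M'$.

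Step three: conclude. Since $P$ is a permutation matrix, $P^T=P^{-1}$ and so $M=PNP^{-1}$ is similar to $N$, giving $\operatorname{spec}(M)=\operatorname{spec}(N)=\{\pm\lambda_i\}$ as claimed. The entire argument is elementary; there is no genuine obstacle, only the bookkeeping in step two, where one must be careful that the interlacing permutation indeed converts the block-antidiagonal pattern of $N$ into the promised subdiagonal sequence of $M$. A quick sanity check on small cases ($n=1$ giving a $2\times 2$ matrix with off-diagonal $a_1$ and eigenvalues $\pm a_1$, and $n=2$) confirms the pattern before proceeding to general $n$.
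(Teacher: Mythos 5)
Your proposal is correct and follows essentially the same route as the paper's proof: form the block anti-diagonal dilation of $B$, identify its spectrum via the singular vectors, and conjugate by an interleaving permutation to obtain the tridiagonal $M$. The only superficial difference is that the paper places $B$ and $B^T$ in the opposite blocks and phrases the permutation as the list $(2,4,\dots,2n,1,3,\dots,2n-1)$, but these are the same argument.
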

\noindent We learned about this trick from  \cite{DF}, we reproduce the simple proof for the sake of the reader.
\begin{proof}
Consider the matrix $\tilde B=\left[\begin{array}{cc}
0&B^T\\B&0
\end{array}\right]$. If $Au=\lambda_i v$ and $A^T v=\lambda_i u$ then $[u,\pm v]^T$ is an eigenvector of $\tilde B$ with eigenvalue $\pm \lambda_i$.
 Let $C$ be the permutation matrix corresponding to $(2,4,\dots, 2n, 1,3,\dots,2n-1)$. Then  $C^T \tilde B C$ is exactly the tridiagonal matrix described in the lemma and its eigenvalues are exactly $\pm \lambda_i, i=1\dots n$.
\end{proof}
Because of the previous lemma it is enough to study the eigenvalues of the $(2n)\times(2n)$ tridiagonal matrix
\begin{equation}\label{eq:matrix}
\tilde A_{n,m}=\frac{1}{\sqrt{\beta}}\left[
\begin{array}{ccccccc}
0&\tilde \chi_{\beta (m-1)}&&&&\\
\tilde \chi_{\beta (m-1)}&0&\chi_{\beta(n-1)}&&&\\
&\chi_{\beta(n-1)}&0&\tilde \chi_{\beta (m-2)}&&\\
&&\ddots&\ddots&\ddots&\\
&&&\tilde \chi_{\beta (m-n+1)}&0&\chi_{\beta}\\
&&&&\chi_{\beta}&0&{\tilde \chi_{\beta (m-n)}}\\
&&&&&{\tilde \chi_{\beta (m-n)}}&{0}
\end{array}
     \right]
\end{equation}
The main advantage of this representation, as opposed to studying the tridiagonal matrix $A_{n,m} A_{n,m}^T$, is that here the entries are independent modulo symmetry.
\begin{remark}\label{rem:sing}
Assume that $[u_1,v_1,u_2,v_2,\dots, u_n,v_n]^T$ is an eigenvector for $\tilde A_{n,m}$ with eigenvalue $\lambda$. Then $[u_1,u_2,\dots,u_n]^T$ is and eigenvector for $A_{n,m}^T A_{n,m}$ with eigenvalue $\lambda^2$ and $[v_1,v_2,\dots,v_n]^T$ is an eigenvector for $A_{n,m} A_{n,m}^T$ with eigenvalue $\lambda^2$.
\end{remark}

\subsection{Bulk limit of the singular values}\label{subs:thm}

We can compute the asymptotic spectral density of $\tilde A_{n,m}$ from the Marchenko-Pastur distribution. If  $m/n\to \gamma\in [1,\infty)$ then the asymptotic density (when scaled with $\sqrt{n}$) is
\begin{eqnarray}\notag
\sigma^\gamma(x)&=&2|x| \tilde \sigma^\gamma (x^2)= \frac{\sqrt{(x^2-a^2)(b^2-x^2)}}{\pi x } 1_{[a,b]}(|x|)\\&=& \frac{\sqrt{(x-a)(x+a)(b-x)(b+x)}}{\pi x } 1_{[a,b]}(|x|).\label{eq:sp_dens}
\end{eqnarray}
This means that the spectrum of $\tilde A_{n,m}$ in $\R^+$ is asymptotically concentrated on the interval $[\sqrt{m}-\sqrt{n}, \sqrt{m}+\sqrt{n}]$. We will  scale around $\mu_n\in (\sqrt{m}-\sqrt{n}, \sqrt{m}+\sqrt{n})$ where  $\mu_n$ is chosen  in a way  that it is not too close to the edges. Near $\mu_n$ the asymptotic eigenvalue density should be close to $\sigma^{m/n}(\mu_n/\sqrt{n})$ which explains the choice of the scaling parameters in the following theorem.
\begin{theorem}\label{thm:second}
Fix $\beta>0$ and suppose that $m=m(n)> n$. Let $\Lambda_n$ denote the set of eigenvalues of $\tilde A_{n,m}$ and set
\begin{eqnarray}\label{eq:n0}
n_0&=&\frac{\pi^2}{4} n \,\sigma^{m/n}\!\!\left({\mu_n} n^{-1/2}\right)^2-\frac12, \qquad n_1=n-\frac{\pi^2}{4} n \,\sigma^{m/n}\!\!\left({\mu_n} n^{-1/2}\right)^2.
\end{eqnarray}
Assume that as $n\to \infty$ we have
\begin{equation}\label{eq:n1_cond}
n_1^{1/3} n_0^{-1}\to 0
\end{equation}
and
\begin{equation}\label{eq:mu_cond}
\liminf_{n\to \infty} m/n>1 \qquad \textup{or} \qquad \lim_{n\to \infty} m/n=1 \textup{ and } \liminf \mu_n/\sqrt{n}>0.
\end{equation}
Then 
\begin{eqnarray}\label{eq:limit1}
 4\sqrt{n_0}  (\Lambda_n-\mu_n)\Rightarrow \Sineb.
\end{eqnarray}
\end{theorem}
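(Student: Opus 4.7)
The plan is to adapt the phase-function approach of \cite{carousel} to the Laguerre tridiagonal matrix $\tilde A_{n,m}$ of (\ref{eq:matrix}). For a real spectral parameter $\lambda$, the eigenvalue equation $\tilde A_{n,m}v=\lambda v$ is a three-term linear recursion for $v_1,\dots,v_{2n}$; encode it in Prüfer-type variables by writing each consecutive pair $(v_k,v_{k+1})$ in the form $r_k(\sin(\vfi^\lambda_k/2),\cos(\vfi^\lambda_k/2))$. Then $\vfi^\lambda$ evolves as an explicit $\R/(2\pi\Z)$-valued random walk whose one-step law is determined by the two chi entries in the current pair of rows. A Sturm-type oscillation identity for symmetric tridiagonal matrices identifies the number of eigenvalues of $\tilde A_{n,m}$ strictly below $\lambda$ with the total winding of $\vfi^\lambda$ from $k=1$ to $k=2n$. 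Consequently (\ref{eq:limit1}) reduces to showing that the family
\[
s\ \longmapsto\ \vfi^{\mu_n+s/(4\sqrt{n_0})}-\vfi^{\mu_n}
\]
converges in a suitable functional sense to the phase process whose winding defines $\Sineb$ in \cite{carousel}.

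To identify the scaling limit, decompose each $\chi_{\beta(n-k)}/\sqrt{\beta}$ and $\tilde\chi_{\beta(m-k)}/\sqrt{\beta}$ as its mean (of order $\sqrt{n-k}$ and $\sqrt{m-k}$ respectively) plus an asymptotically Gaussian, mean-zero fluctuation of size $O(1/\sqrt{\beta})$. Substituting into the recursion and Taylor-expanding the one-step phase increment $\Dfi^\lambda_k$ around $\lambda=\mu_n$, the leading deterministic contribution should produce a rotation at the classical local rate $\pi\,\sigma^{m/n}(\mu_n/\sqrt{n})/\sqrt{n}$ per pair of rows, which is precisely the quantity defining $n_0$ in (\ref{eq:n0}). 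The linear term of the Taylor expansion in $\lambda$, paired with the scaling $4\sqrt{n_0}$, should yield the identity drift $s\,dt$ of the $\Sineb$ SDE, while the Gaussian fluctuations of the entries should supply the hyperbolic Brownian driving noise. After a suitable reparameterization of the matrix index, Donsker-type convergence of the noise combined with continuity of the SDE coefficients (as in \cite{carousel}) delivers the functional convergence. The alternation of the two row types in (\ref{eq:matrix}) is absorbed by grouping adjacent rows in pairs and averaging their contributions; in the continuum limit this averaging matches the Hermite step of \cite{carousel} up to a rescaling of time.

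The principal obstacle, and the point at which genuinely new estimates are needed beyond \cite{carousel}, is that the chi parameters in $\tilde A_{n,m}$ are far from uniform along the matrix: $\chi_{\beta(n-k)}$ degenerates at the lower end, and $\tilde\chi_{\beta(m-k)}$ degenerates at the upper end precisely in the regime $m/n\to 1$. The two hypotheses (\ref{eq:n1_cond}) and (\ref{eq:mu_cond}) are in place to tame exactly these two degenerations: (\ref{eq:n1_cond}) says that, after the natural reparameterization, the boundary strip of size $n_1$ near the soft edge contributes negligibly to the total winding of the phase, while (\ref{eq:mu_cond}) keeps $\mu_n$ a macroscopic distance from the potentially hard lower edge. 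Producing a priori bounds on $\vfi^\lambda$ through these degenerating strips, together with tightness of the phase process uniformly in $\lambda$ over compact windows, is the main work on top of \cite{carousel}. Finally, by Remark \ref{rem:sing} the negative half of the spectrum of $\tilde A_{n,m}$ sits a macroscopic distance from $\mu_n>0$ under (\ref{eq:mu_cond}), so it does not interfere with the local limit near $\mu_n$.
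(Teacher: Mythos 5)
Your high-level strategy — phase functions on the doubled tridiagonal matrix plus an oscillation-theoretic eigenvalue count — is indeed the paper's framework, but as written the proposal would break down at several specific points where genuinely new machinery is required and which you either omit or gloss over.

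First, the raw Pr\"ufer substitution $(v_k,v_{k+1})=r_k(\sin(\vfi_k/2),\cos(\vfi_k/2))$ does not give a phase with asymptotically small increments. Each step of the recursion is dominated by a deterministic hyperbolic rotation of order one (the composition $\mbf{J}_\ell\hat{\mbf{J}}_\ell$), so the unregularized phase jumps by an $O(1)$ amount at every index and cannot possibly converge to an SDE. The paper first conjugates $\tilde A_{n,m}$ by a diagonal matrix to center the off-diagonal entries, and then regularizes the phase by conjugating by index-dependent affine maps $\mbf{T}_\ell$ (which move the fixed point of the rotation to the center of the disk) and by cumulative rotations $\mbf{Q}_\ell$ (which undo the deterministic spinning). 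Only after this regularization are the increments $O(n_0^{-1/2})$ and susceptible to the diffusion-approximation machinery of \cite{carousel}. Your proposal has no analogue of this step.

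Second, you cannot count eigenvalues by winding the forward phase all the way from $k=1$ to $k=2n$. The transformations $\mbf{T}_\ell$ become singular as $\ell\to n_0$ (since $\Im\rho_\ell\to 0$), and beyond $n_0$ the map $\mbf{J}_\ell\hat{\mbf{J}}_\ell$ is no longer a rotation, so the forward phase is not controllable on the tail of the matrix at this scale. The paper instead runs a second, \emph{target} phase function backward from the other end, meets it at an intermediate index $n_2 = \lfloor n_0 - \mathcal{K}(n_1^{1/3}\vee 1)\rfloor$, and counts eigenvalues by the identity (\ref{evcount1}). The argument then has three genuinely distinct regimes: (i) the SDE limit on $[0,n_0(1-\eps)]$, (ii) a ``middle stretch'' $[n_0(1-\eps),n_2]$ where one shows the relative phase stays near multiples of $2\pi$ via a Gronwall-type estimate and that the absolute phase becomes uniform mod $2\pi$, and (iii) a ``last stretch'' where one must show the backward relative phase $\alpha^\odot_{n_2,\lambda}\to 0$.

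Third, and most substantively, your claim that condition (\ref{eq:n1_cond}) means the boundary strip ``contributes negligibly to the total winding'' is not how the argument works. The absolute winding contribution of the last $O(n_1)$ rows is not small; what is needed is that its \emph{$\lambda$-derivative} vanishes, i.e.\ that the backward target phase loses its dependence on $\lambda$. Proving $\alpha^\odot_{n_2,\lambda}\to 0$ when $n_1\to\infty$ is done in the paper by reinterpreting the backward recursion as a soft-edge eigenvalue problem for a smaller Laguerre matrix with a generalized boundary condition and invoking the edge scaling limits of Ram\'\i rez--Rider--Vir\'ag \cite{RRV}, together with the non-atomicity of the $\Airyb$ process. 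This reliance on the edge results is a genuinely new ingredient beyond \cite{carousel} and is precisely the step where (\ref{eq:n1_cond}) and (\ref{eq:mu_cond}) enter; your proposal gives no mechanism for proving it. Until these three components — phase regularization, the bidirectional count with a middle-stretch/uniformization argument, and the last-stretch reduction to \cite{RRV} — are supplied, the proposal is an outline of the right approach but not a proof.
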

The extra 1/2 in the definition of $n_0$ is introduced to make some of the forthcoming formulas nicer.
We also note that the following identities hold:
\begin{align}
n_0+\frac12=\frac{2 (m+n) \mu_n^2-(m-n)^2-\mu_n^4}{4 \mu_n^2},
\qquad n_1=\frac{\left(m-n-\mu_n^2\right)^2}{4 \mu_n ^2}.\label{eq:n0n1}
\end{align}


Note that we did not assume that $m/n$ converges to a constant or that $\mu_n=c \sqrt{n}$. By the discussions at the beginning of this section $(\Lambda_n\cap \R^+)^2$ is distributed according to the Laguerre ensemble. If we assume that $m/n\to \gamma$ and $\mu_n=\sqrt{c} \sqrt{n}$ with $c\in (a(\gamma)^2,b(\gamma)^2)$ then both (\ref{eq:n1_cond}) and (\ref{eq:mu_cond}) are satisfied. Since in this case $n_0 n^{-1}\to \tilde \sigma^\gamma(c)$ the
 result of  Theorem \ref{thm:second} implies Theorem \ref{thm:main}.
%
%

\begin{remark}\label{rem:subseq}
We want prove that the weak limit of $4\sqrt{n_0}(\Lambda_n-\mu_n)$ is $\Sineb$, thus it is sufficient to prove that for any subsequence of $n$ there is a further subsequence so that the  limit in distribution holds. Because of this by taking an appropriate subsequence we may assume that
\begin{eqnarray}\label{extracond}
m/n\to \gamma \in [1,\infty],\qquad \textup{and if} \quad m/n\to 1 \quad \textup{then } \quad \mu_n/\sqrt{n}\to c\in(0,2].
\end{eqnarray}
These assumptions imply that for $m_1=m-n+n_1$ we have
\begin{eqnarray}\label{m1}
\liminf m_1/n>0.
\end{eqnarray}
One only needs to check this in the $m/n\to 1$ case, when from (\ref{extracond}) and the definition of $n_1$ we get $n_1/n\to c>0$.
\end{remark}

\begin{remark}
The conditions of Theorem \ref{thm:second} are optimal if $\liminf m/n>1$ and the theorem
provides a complete description of the possible point process scaling limits of $\Lambda^L_n$. To see this first note that using $\Lambda_n^L=(\Lambda_n\cap \R^+)^2$ we can translate the edge scaling limit of Theorem \ref{thm:softedge} to  get
\begin{equation}\label{eq:edge}
\frac{2(mn)^{1/6}}{(\sqrt{m}\pm\sqrt{n})^{1/3}}(\Lambda_n-(\sqrt{m}\pm\sqrt{n}))\Rightarrow \pm \Airyb.
\end{equation}
If $\liminf m/n>1$ then by the previous remark we may assume $\lim m/n=\gamma\in (1,\infty]$. Then the previous statement can be transformed into $n^{1/6}(\Lambda_n-(\sqrt{m}\pm\sqrt{n}))\cd \Xi$ where $\Xi$ is a 
 a linear transformation of $\Airyb$. From this it is easy to check that if $n_1^{1/3} n_0^{-1}\to c \in (0,\infty]$ then we need to scale $\Lambda_n-\mu_n$ with $n^{1/6}$ to get a meaningful limit (and the limit is a linear transformation of $\Airyb$) and if $n_1^{1/3} n_0^{-1}\to 0$ then we get the bulk case.

If $m/n\to 1$ then the condition (\ref{eq:mu_cond}) is suboptimal, this is partly  due to the fact that the lower soft edge limit in this case is not available.
Here the statement should be true with the following condition instead of (\ref{eq:mu_cond}): 
\begin{eqnarray*}
&&\mu_n{\sqrt{n}}{(m-n)^{-1/3}}-\frac12 (m-n)^{2/3}\to \infty.
\end{eqnarray*}
\end{remark}

%
%
%
%

\subsection{Counting eigenvalues of tridiagonal matrices}\label{subs:evcount}

Assume that the tridiagonal $k\times k$ matrix $M$ has positive off-diagonal entries.
\[
M=\left[
\begin{array}{ccccc}
a_1&b_1&&\\
c_1&a_2&b_2&\\
&\ddots&\ddots&\\
&&c_{k-2}&a_{k-1}&b_{k-1}\\
&&&c_{k-1}&a_k
\end{array}\right], \qquad b_i>0, c_i>0.
\]
If $u=\left[u_1,\dots,u_k\right]^T$ is an eigenvector corresponding to $\lambda$ then we have
\begin{equation}
c_{\ell-1} u_{\ell-1}+a_{\ell} u_{\ell}+b_{\ell} u_{\ell+1}=\lambda u_\ell, \qquad \ell=1,\dots k\label{ev}
\end{equation}
where we can we set $u_0=u_{k+1}=0$ (with $c_0, b_k$ defined arbitrarily). This gives a single term recursion on $\R\cup\{\infty\}$ for the ratios $r_\ell=\frac{u_{\ell+1}}{u_\ell}$:
\begin{equation}\label{r_rec}
r_0=\infty, \quad r_{\ell}=\frac{1}{b_\ell}\left(-\frac{c_{\ell-1}}{r_{\ell-1}}+\lambda-a_\ell
\right), \qquad \ell=1,\dots k.
\end{equation}
This recursion can be solved for any parameter $\lambda$, and $\lambda$ is an eigenvalue if and only if $r_k=r_{k,\lambda}=0$.

We can turn $r_{\ell,\lambda}$ into an angle $\phi_{\ell,\lambda}$ with $\lambda\to \phi_{\ell,\lambda}$ being continuous, monotone increasing and $2\tan(\phi)=r$.
Then  the values $\phi_{k,\lambda_0}$ and $\phi_{k,\lambda_1}$ will determine the number of eigenvalues of $M$ in $[\lambda_0,\lambda_1]$:
\[
\# \left\{ (\phi_{k,\lambda_0},\phi_{k, \lambda_1}]\cap 2\pi \Z \right\}=\#\{\textup{eigenvalues in $(\lambda_0, \lambda_1]$}\}
\]
This is basically a discrete version of the Sturm-Liouville oscillation theory.

We do not need to fully solve the recursion  (\ref{r_rec}) in order to count eigenvalues. If we consider the reversed version of (\ref{r_rec}) started from index $k$ with initial condition $0$:
\begin{equation}
r^{\odot}_k=0, \quad r^{\odot}_{\ell-1}=-c_{\ell}\left({a_{\ell}}-\lambda+b_\ell r^{\odot}_\ell\right)^{-1}, \qquad \ell=1,\dots k.
\end{equation}
then  $\lambda$ is an eigenvalue if and only if $r_{\ell,\lambda}=r^{\odot}_{k-\ell,\lambda}$. Moreover, if we turn $r^{\odot}_{\ell,\lambda}$ into an angle $\phi^{\odot}_{\ell, \lambda}$ (similarly as before for $r$ and $\phi$) we can also count eigenvalues in the interval $[\lambda_0, \lambda_1]$ by the formula
\begin{equation}\label{evcount}
\# \left\{ (\phi_{\ell,\lambda_0}-\phi^{\odot}_{\ell,\lambda_0},\phi_{\ell,\lambda_1}-\phi^{\odot}_{\ell,\lambda_1}]\cap 2\pi \Z \right\}=\#\{\textup{eigenvalues in $(\lambda_0, \lambda_1]$}\}
\end{equation}

In our case, by analyzing the scaling limit of a certain version of the phase function $\phi_{\ell,\lambda}$ we can identify the limiting point process.  This method was used in \cite{carousel} for the bulk scaling limit of the $\beta$ Hermite ensemble. An equivalent approach (via transfer matrices) was used in  \cite{KVV} and \cite{VV3} to analyze the asymptotic behavior of the spectrum for certain discrete random Schr\"odinger operators.

\subsection{The $\Sineb$ process}\label{subs:sineb}


The distribution of the point process $\Sineb$ from Theorem \ref{thm:carousel} was described in \cite{carousel} as a functional of the Brownian motion in the hyperbolic plane (the Brownian carousel) or equivalently via a system of stochastic differential equations. We review the latter description here. Let $Z$ be a complex Brownian motion with i.i.d.~standard real and imaginary parts. Consider the strong solution of the following one parameter system of stochastic differential equations for $t\in [0,1)$, $\lambda\in \R$\,:\begin{equation}\label{sineb}
d\alpha_\lambda= \frac{\lambda}{2 \sqrt{1-t}} dt+\frac{\sqrt{2}}{\sqrt{\beta(1-t)}}\Re\left[(e^{-i \alpha_\lambda}-1)dZ\right], \qquad \alpha_\lambda(0)=0.
\end{equation}
It was proved in \cite{carousel} that for any given $\lambda$ the limit  $N(\lambda)=\frac1{2\pi} \lim_{t\to 1} \alpha_{\lambda}(t)$ exists, it is integer valued a.s.
and $N(\lambda)$ has the same distribution as the counting function  of the point process $\Sineb$ evaluated at $\lambda$. Moreover, this is true for the joint distribution of $(N(\lambda_i), i=1,\dots,d)$ for any fixed vector $(\lambda_i,i=1,\dots,d)$. Recall that the counting function at $\lambda>0$ gives the number of points in the interval $(0,\lambda]$, and negative the number of points in $(\lambda,0]$ for $\lambda<0$.

%
%

%
%
%
%
%
%

\section{The main steps of the proof of Theorem \ref{thm:second}}\label{s:proof}

The proof will be similar to one given for Theorem \ref{thm:carousel} in \cite{carousel}. The basic 
idea is simple to explain: we will define a version of the phase function and the target phase function for the rescaled eigenvalue equation and consider (\ref{evcount}) with a certain $\ell=\ell(n)$. We will then show that the length of the interval in the left hand side of the equation converges to $2\pi(N(\lambda_1)-N(\lambda_0))$ while the left endpoint of that interval becomes uniform modulo $2\pi$. This shows that the scaling limit of the eigenvalue process is given by $\Sineb$.

The actual proof will require several steps. In order to limit the size of this paper and not to make it overly technical, we will recycle some  parts of the proof in \cite{carousel}. Our aim is to give full details whenever there is a major difference between the two proofs and to provide an outline of the proof if one can adapt parts of \cite{carousel} easily.

\begin{proof}[Proof of Theorem \ref{thm:second}]

Recall that $\Lambda_n$ denotes the multi-set of eigenvalues for the matrix $\tilde A_{n,m}$ which is  defined in (\ref{eq:matrix}).  We denote by $N_n(\lambda)$ the counting function of the scaled random multi-sets $4 n_0^{1/2}(\Lambda_n-\mu_n)$, we will prove that for any $(\lambda_1,\cdots,\lambda_d)\in\mathbb{R}^d$ we have 
\begin{equation}\label{Nlim}
\left(N_n(\lambda_1),\cdots,N_n(\lambda_d)\right)
\stackrel{d}{\Longrightarrow}
\left(N(\lambda_1),\cdots,N(\lambda_d)\right).
\end{equation}
where $N(\lambda)=\frac1{2\pi}\lim_{t\to 1} \alpha_{\lambda}(t)$ as defined using the SDE (\ref{sineb}).

We will use the ideas described in Subsection \ref{subs:evcount} to analyze the eigenvalue equation  $\tilde A_{n,m} {x}=\Lambda{x}$, where ${x}\in \R^{2n}$. Following the scaling given in (\ref{eq:limit1}) we set
\[\Lambda=\mu_n+\frac{\lambda}{4\sqrt{n_0}}.\]
In Section \ref{s:phase} we will define the phase function $\vfi_{\ell, \lambda}$ and the target phase function $\vfi^{\odot}_{\ell, \lambda}$ for $\ell\in[0,n_0)$. These will be independent of each other  for a fixed $\ell$ (as functions in $\lambda$) and satisfy the following identity for $\lambda<\lambda'$:
\begin{equation}\label{evcount1}
\# \left\{ (\vfi_{\ell,\lambda}-\vfi^{\odot}_{\ell,\lambda},\vfi_{\ell,\lambda'}-\vfi^{\odot}_{\ell,\lambda'}]\cap 2\pi \Z \right\}=N_n(\lambda')-N_n(\lambda).
\end{equation}
The phase function $\vfi$ will be a regularized version of the phase function obtained from the ratio of the consecutive elements of the eigenvector. The regularization is needed in order to have a phase function which is asymptotically continuous. Indeed, in Proposition \ref{prop:phisde} of Section \ref{s:sde} we will show that for any $0<\eps<1$  the rescaled version of the phase function $\vfi_{\ell, \lambda}$ in $\left[0, n_0(1-\eps)\right]$ converges to a one-parameter family of stochastic differential equations. Moreover we will prove that in the same region the relative phase function $\alpha_{\ell, \lambda}=\vfi_{\ell, \lambda}-\vfi_{\ell, 0}$ will converge to the solution $\alpha_\lambda$ of the SDE (\ref{sineb})\begin{equation}
\alpha_{\lfloor n_0(1-\eps)\rfloor, \lambda}\cd \alpha_{\lambda}(1-\eps), \quad \textup{as $n\to \infty$}
\end{equation}
in the sense of finite dimensional distributions in $\lambda$. This will be the content of Corollary \ref{cor:sineb}.

Next we will describe the asymptotic behavior of the phase functions $\vfi_{\ell, \lambda}, \alpha_{\ell, \lambda}$ and $\vfi^{\odot}_{\ell, \lambda}$ in the stretch $\ell \in [\lfloor n_0(1-\eps)\rfloor, n_2]$ where
\begin{equation}
n_2=\lfloor n_0-\mathcal{K}(n_1^{1/3}\vee 1)\rfloor.
\end{equation}
(The constants $\eps, \mathcal{K}$ will be determined later.)
We will show that if the relative phase function is already close to an integer multiple of $2\pi$ at $\lfloor n_0(1-\eps)\rfloor$ then it will not change too much in the interval $[\lfloor n_0(1-\eps)\rfloor,n_2]$. To be more precise, in Proposition \ref{prop:middle} of Section \ref{s:middle} we will prove that
there exists a constant $c=c(\bar\lambda,\beta)$ so that  we have
\[\mathbb{E}\left[|(\alpha_{\lfloor n_0(1-\eps)\rfloor,\lambda}-\alpha_{n_2,\lambda})\wedge 1\right]\leq c
\left[\textup{dist}(\alpha_{\lfloor n_0(1-\eps)\rfloor,\lambda},2\pi\mathbb{Z})+\sqrt{\epsilon}+n_0^{-1/2}(n_1^{1/6}{\vee} \log n_0)+\mathcal{K}^{-1}\right]\]
for all $\mathcal{K}>0, \epsilon\in(0,1),\lambda\leq |\bar\lambda|$.

We will also show that if  $\mathcal{K}\to \infty$ and $\mathcal{K}(n_1^{1/3}\vee 1) n_0^{-1}\to 0$ then  the random angle $\varphi_{n_2,0}$ becomes uniformly distributed module $2\pi$ as $n\to \infty$ (see Proposition \ref{prop:unifmiddle}).
 
Next we will prove that the target phase function will loose its dependence on $\lambda$: for every $\lambda\in \R$ and $\mathcal{K}>0$ we have
\begin{equation}
\alpha^{\odot}_{\ell, \lambda}=\varphi^{\odot}_{n_2,\lambda}-\varphi^{\odot}_{n_2,0}\cp 0, \quad \textup{as $n\to \infty$}.
\end{equation}
This will be the content of Proposition \ref{prop:end} in Section \ref{s:last}.

The proof now can be finished exactly the same way as in \cite{carousel}. We can choose $\eps=\eps(n)\to 0$ and $\mathcal{K}=\mathcal{K}(n)\to \infty$ so that  the following limits all hold simultaneously:
\begin{eqnarray*}
(\alpha_{\lfloor n_0(1-\eps)\rfloor, \lambda_i}, i=1\dots d)&\cd& (2\pi N(\lambda_i), i=1\dots d),\\
\vfi_{n_2, 0}&\cp& \textup{Uniform}[0,2\pi]\quad \textup{modulo $2\pi$},\\
\alpha_{\lfloor n_0(1-\eps)\rfloor, \lambda_i}-\alpha_{n_2, \lambda_i}&\cp& 0, \quad i=1,\dots,d,\\
\alpha^{\odot}_{n_2,\lambda_i}&\cp& 0, \quad i=1,\dots,d.
\end{eqnarray*}
This means that if we apply the identity (\ref{evcount1}) with $\lambda=0, \lambda'=\lambda_i$ and $\ell=n_2$ then the length of the random intervals 
\[
I_i=
(\vfi_{n_2,0}-\vfi^{\odot}_{n_2,0},\vfi_{n_2,\lambda_i}-\vfi^{\odot}_{n_2,\lambda_i}]\
\]
converge to $2\pi N(\lambda_i)$ in distribution (jointly), while the common left 
endpoint of these intervals becomes uniform modulo $2\pi$. (Since $\vfi_{n_2,0}$ and $\vfi^{\odot}_{n_2,0}$ are independent and $\vfi_{n_2,0}$ converges to a uniform distribution mod $2\pi$.)
This means that $\#\{2 k\pi \in I_i: k\in \Z\}$    converges to $N(\lambda_i)$ which proves (\ref{Nlim}) and Theorem \ref{thm:second}. 
\end{proof}

\section{Phase functions}\label{s:phase}

In this section we introduce the phase functions used to count the eigenvalues.

\subsection{The eigenvalue equations}

Let $s_j=\sqrt{n-j-1/2}$ and $p_j=\sqrt{m-j-1/2}$. Conjugating the matrix $\tilde A_{n,m}$ (\ref{eq:matrix}) with a $(2n)\times(2n)$ diagonal matrix $D=D^{(n)}$ with diagonal elements

\[
D_{2i,2i}=\prod_{\ell=1}^i \frac{\tilde \chi_{\beta (m-\ell)} \chi_{\beta(n-\ell)}}{\beta p_\ell s_\ell}, \qquad D_{2i+1,2i+1}=\frac{\tilde \chi_{\beta (m-i-1)}}{\sqrt{\beta} p_{i+1}}  \prod_{\ell=1}^i \frac{\tilde \chi_{\beta (m-\ell)} \chi_{\beta(n-\ell)}}{\beta p_{\ell} s_\ell}
\]
we get the tridiagonal matrix $\tilde A_{n,m}^D=D^{-1} \tilde A_{n,m} D$:
\begin{equation}\label{eq:tridag}
\tilde A_{n,m}^D=\left[
\begin{array}{ccccccc}
0&p_0+X_0&&&&\\
p_1&0&s_0+Y_0&&&\\
&s_1&0&p_1+X_1&&\\
&&\ddots&\ddots&\ddots&\\
&&&p_{n-1}&0&s_{n-2}+Y_{n-2}\\
&&&&s_{n-1}&0&{p_{n-1}+X_{n-1}}\\
&&&&&{p_{n}}&{0}
\end{array}
     \right]
\end{equation}
where
\[
X_{\ell}=\frac{\tilde \chi_{\beta(m-\ell-1)}^{2}}{\beta p_{\ell+1}}-p_{\ell}, \quad 0\leq \ell\leq n-1
, \qquad
Y_{\ell}=\frac{\chi_{\beta(n-\ell-1)}^{2}}{\beta s_{\ell+1}}-s_{\ell}, \quad 0\leq \ell\leq n-2.
\]
The first couple of moments of these random variables are explicitly computable using the moment generating function of the $\chi^2$-distribution and we get the following asymptotics:
\begin{equation}
\begin{array}{c}
\ev X_{\ell}=\O((m-\ell)^{-3/2}),\quad  \ev X_\ell^2=2/\beta+\O((m-\ell)^{-1}), \quad \ev X_{\ell}^4=\O(1),\\[4pt]
\ev Y_{\ell}=\O((n-\ell)^{-3/2}),\quad  \ev Y_\ell^2=2/\beta+\O((n-\ell)^{-1}), \quad \ev Y_{\ell}^4=\O(1),
\end{array}\label{eq:moments}
\end{equation}
where the constants in the error terms only depend on $\beta$.

We consider the eigenvalue equation for $\tilde A_{n,m}^D$ with a given $\Lambda\in \R$ and denote a nontrivial solution of the  first $2n-1$ components by  $u_1,v_1,u_2, v_2,\dots, u_n, v_n$. Then we have
\begin{eqnarray*}
s_\ell v_\ell+(p_{\ell}+{X_{\ell}})v_{\ell+1}&=&\Lambda u_{\ell+1},\qquad 0\le \ell\le n-1,\\
p_{\ell+1} u_{\ell+1}+(s_{\ell}+{Y_{\ell}}) u_{\ell+2}&=&\Lambda v_{\ell+1}, \qquad 0\le \ell \le n-2,
\end{eqnarray*}
where we set $v_0=0$ and we can assume $u_1=1$ by linearity.
We set  $r_\ell=r_{\ell,\Lambda}=u_{\ell+1}/v_{\ell}$, $0\le \ell\le n-1$ and $\hat r_\ell=\hat r_{\ell,\Lambda}=v_\ell/u_\ell$, $1\le \ell\le n$. These are elements of $\R\cup\{\infty\}$  satisfying the recursion
\begin{eqnarray}\label{eq:recrr}
\hat r_{\ell+1}&=&\left(-\frac1{r_\ell}\cdot \frac{s_\ell}{p_\ell} +\frac{\Lambda}{p_\ell}\right)\left(1+\frac{X_\ell}{p_\ell}\right)^{-1},\qquad 0\le \ell\le n-1\\
 r_{\ell+1}&=&\left(-\frac1{\hat r_{\ell+1}}\cdot \frac{p_{\ell+1}}{s_\ell} +\frac{\Lambda}{s_\ell}\right)\left(1+\frac{Y_\ell}{s_\ell}\right)^{-1},\qquad 0\le \ell\le n-2,\label{eq:recr}
\end{eqnarray}
with initial condition $r_0=\infty$.
We can set $Y_n=0$ and define $r_n$ via (\ref{eq:recr}) with $\ell=n-1$, then $\Lambda$ is an eigenvalue if and only if $r_n=0$.

\subsection{The hyperbolic point of view}

We use the hyperbolic geometric approach of \cite{carousel} to study the evolution of $r$ and $\hat r$. We will view $\R\cup \{\infty\} $ as the boundary of the hyperbolic plane $\HH=\{\Im z>0: z\in \CC\}$ in the Poincar\'e half-plane model. We denote the  group of  linear fractional transformations preserving $\HH\,$ by $\textup{PSL}(2, \R)$.  The recursions for both $r$ and $\hat r$ evolve by elements of this group  of the form $x\mapsto b-a/x$ with $a>0$. 

The Poincar\'e half-plane model
is equivalent to the Poincar\'e disk model $\UU=\{|z|< 1\}$ via the conformal bijection $\mathbf{U}(z)= \frac{i-z}{z-i}$ which is also a bijection between the boundaries $\partial\HH=\R\cup\{\infty\}$ and $\partial\UU=\{|z|=1, z\in \CC\,\}$. Thus elements of $\textup{PSL}(2, \R)$ also act naturally on the unit circle $\partial \UU$. 
By lifting these maps to $\R$, the universal cover of $\partial \UU$, each element $\mathbf{T}$ in $\textup{PSL}(2, \R)$ becomes an $\R\to \R$ function. The lifted versions are uniquely determined up to shifts by $2\pi$ and will also form a group which we denote by $\textup{UPSL}(2, \R)$. For any $\mathbf{T}\in \textup{UPSL}(2, \R)$ we can look at $\mathbf{T}$ as a function acting on $\partial \HH\,$, $\partial \UU$ or $\R$. We will denote these actions by:
\[
\partial \HH\to \partial \HH\,: z\mapsto z\ldot \mathbf{T}, \quad \partial \UU\to \partial \UU\,: z\mapsto z\lcirc \mathbf{T}, \quad \partial \R\to \partial \R\,: z\mapsto z\lstar \mathbf{T}.
\]
For every $\mathbf{T}\in \textup{UPSL}(2, \R)$ the function $x\mapsto f(x)=x\lstar T$ is monotone, analytic and quasiperiodic modulo $2\pi$: $f(x+2\pi)=f(x)+2\pi$. It is clear from the definitions  that $e^{i x}\lcirc T=e^{i f(x)}$ and $(2\tan(x))\ldot \mathbf{T}=2 \tan f(x)$.

Now we will introduce a couple of simple elements of $\textup{UPSL}(2, \R)$. For a given $\alpha\in \R$ we will denote by $\mathbf{Q}(\alpha)$ the rotation by $\alpha$ in $\UU$ about 0. More precisely,
$
\vfi\lstar \mathbf{Q}(\alpha)=\vfi+\alpha
$. For $a>0, b\in \R$ we denote by $\mbf{A}(a,b)$ the affine map $z\to a(z+b)$ in $\HH$\,. This is an element of $\textup{PSL}(2, \R)$ which fixes $\infty$ in $\HH$\, and $-1$ in $\partial \UU$. We specify its lifted version in $\textup{UPSL}(2, \R)$ by making it fix $\pi$, this will uniquely determines it as a $\R\to \R$ function.

Given $\mathbf{T}\in \textup{UPSL}(2, \R)$, $x, y\in \R$ we define the angular shift
\[
\textup{ash}(\mathbf{T}, x, y)=(y\lstar \mathbf{T}-x\lstar \mathbf{T})-(y-x)
\]
which gives the change in the signed distance of $x, y$ under $\mathbf{T}$. This only depends on $v=e^{ix}$, $w=e^{iy}$ and the effect of $\mathbf{T}$ on $\partial U$, so we can also view $\ash(\mathbf{T},\cdot,\cdot)$ as a function on $\partial U\times \partial U$ and the following identity holds:
\[
\ash(\mathbf{T}, v, w)=\arg_{[0,2\pi)}(w\lcirc \mathbf{T}/v\lcirc \mathbf{T})-\arg_{[0,2\pi)}(w/v).
\]
The following lemma appeared as Lemma 16 in \cite{carousel}, it provides a useful estimate for the angular shift.
\begin{lemma}\label{lem:ash}
Suppose  that for a $\mbf{T}\in \textup{UPSL}(2, \R)$ we have $(i+z).\mbf{T}=i$ with
$|z|\le \frac13$. Then
 \begin{equation} \label{eq:ash}
 \begin{array}{rcl}
 \ash(\mbf{T},v,w)&=&\Re\left[(\bar w -\bar v)\left(-z-\frac{i(2+\bar v + \bar w)}{4}\,z^2\right)
 \right]+\eps_3\\[5pt]
  &=&-\Re\left[(\bar w -\bar v) z\right]+\eps_2=\eps_1,
  \end{array}
  \end{equation}
where for $d=1,2,3$ and an absolute constant $c$ we have
\begin{eqnarray}
|\eps_d| \le c |w-v| |z|^d\le 2c |z|^d.\label{eq:asherr}
  \end{eqnarray}
If $v=-1$ then the previous bounds hold even in the case
$|z|>\frac13$.
\end{lemma}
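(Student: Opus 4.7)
The plan is to translate the problem into the Poincar\'e disk model, where the relevant automorphism takes an explicit one-parameter form, then carry out a Taylor expansion around $z=0$ and extract the $|w-v|$ factor from the remainder using the antisymmetry of $\ash$ under $v\leftrightarrow w$. Setting $z_0:=\mathbf{U}(i+z)$, the hypothesis $(i+z).\mathbf{T}=i$ becomes $z_0\lcirc \mathbf{T}=0$, so the induced disk map is a M\"obius automorphism of the form $\zeta\mapsto e^{i\theta}(\zeta-z_0)/(1-\bar{z}_0\zeta)$ for some $\theta\in\R$. Since $\mathbf{U}$ is a biholomorphism with $\mathbf{U}(i)=0$, a direct computation gives $z_0=iz/2-z^2/4+O(|z|^3)$, with $|z_0|\le c|z|$ whenever $|z|\le 1/3$.

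For $v,w\in\partial\UU$ (so $\bar v=1/v$, $\bar w=1/w$), the rotation $e^{i\theta}$ cancels in the ratio, and since all four factors $1-z_0\bar{w}$, $1-\bar{z}_0 v$, etc.\ lie in a half-plane around $1$, the principal branch of $\log$ applies and
\begin{equation*}
\ash(\mathbf{T},v,w)=\Im\log\frac{(1-z_0\bar w)(1-\bar{z}_0 v)}{(1-z_0\bar v)(1-\bar{z}_0 w)}.
\end{equation*}
A direct calculation using $|v|=|w|=1$ shows that the numerator minus the denominator of this fraction equals $(\bar w-\bar v)\bigl[-z_0-\bar z_0\, v w+z_0\bar z_0(v+w)\bigr]$; consequently the logarithm factors cleanly as $(\bar w-\bar v)$ times an analytic function of $(v,w,z_0)$ on $\{|z_0|\le 1/2\}$. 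Expanding this analytic factor in $z_0$ and substituting the expansion of $z_0$ in $z$ reproduces both claimed formulas: the linear part equals $2\Im[(\bar w-\bar v)(-z_0)]=-\Re[(\bar w-\bar v)z]+O(|z|^2)$, and at the next order the $-z^2/4$ correction in $z_0$ combines with the quadratic term of $\log(1-\eta)$ to produce the factor $(2+\bar v+\bar w)$ inside $\eps_2$.

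The remainder bounds then follow by a Cauchy estimate on the analytic factor, which is uniformly bounded on $|z_0|\le 1/2$; this gives $|\eps_d|\le c|\bar w-\bar v||z|^d\le c|w-v||z|^d$ at each order, and trivially $\le 2c|z|^d$ since $|w-v|\le 2$. For the special case $v=-1$ with $|z|>1/3$, the only factors that could in principle leave the principal branch are $1-\bar z_0 v=1+\bar z_0$ and $1-z_0\bar v=1+z_0$, whose arguments stay in $(-\pi/2,\pi/2)$ for all $|z_0|<1$; combined with the lift-fixing convention that sends $\pi\leftrightarrow -1$ to itself, the same algebraic factoring and Cauchy bound carry through without modification, which is how the $|z|>1/3$ regime is recovered.

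The main obstacle is isolating the $|w-v|$ factor in $\eps_d$: a naive term-by-term Taylor bound on $\Im\log(1-\eta)$ yields only $|\eps_d|\le c|z|^d$, and the improvement requires the explicit algebraic factoring of the log-argument as $1+(\bar w-\bar v)\cdot(\text{analytic in } z_0)$ displayed above. This factoring is the concrete manifestation of the antisymmetry $\ash(\mathbf{T},w,v)=-\ash(\mathbf{T},v,w)$ at the level of power series, and is the step that requires some care; the rest of the argument is a routine Cauchy estimate and bookkeeping of the expansion of $z_0$ in $z$.
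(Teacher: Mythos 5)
The paper does not prove this lemma; it cites it as Lemma~16 of \cite{carousel}, so there is no in-paper proof to compare your argument against. (Incidentally, the paper's displayed Cayley map $\mathbf{U}(z)=\frac{i-z}{z-i}$ is a typo — it is identically $-1$ — and your computation $z_0 = iz/2 - z^2/4 + O(z^3)$ corresponds to the intended $\mathbf{U}(z)=\frac{i-z}{i+z}$.)

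Your overall plan for the main regime $|z|\le \tfrac13$ is sound. Transferring to the disk, writing
\begin{equation*}
\ash(\mathbf{T},v,w)=\Im\log\frac{(1-z_0\bar w)(1-\bar z_0 v)}{(1-z_0\bar v)(1-\bar z_0 w)},
\end{equation*}
and extracting the factor $(\bar w-\bar v)$ algebraically from $N-D$ is exactly the right mechanism for producing the $|w-v|$ in the error bound; I checked your identity $N-D = (\bar w - \bar v)\bigl[-z_0 - \bar z_0 vw + z_0\bar z_0(v+w)\bigr]$ and it is correct (using $v\bar v=w\bar w=1$). The Cauchy bound on a polydisc in $(z_0,\bar z_0)$ then controls the remainders, and for $|z|\le\tfrac13$ one has $|z_0|\le\tfrac15$, the four factor-arguments lie in $(-\pi/6,\pi/6)$, and the principal-branch $\log$ of the product agrees with $\ash$, so everything is in order.

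The genuine gap is the case $v=-1$, $|z|>\tfrac13$. There your argument breaks in three places. First, the expansion $z_0 = iz/2 - z^2/4 + O(z^3)$ is useless for large $|z|$ (the map $z\mapsto z_0=-z/(2i+z)$ has a pole at $z=-2i$, and the $O(|z|^d)$ bounds you need must be uniform in $z$). Second, your Cauchy estimate is stated "uniformly on $|z_0|\le 1/2$," but $|z_0|$ tends to $1$ as $|z|\to\infty$, so the analytic factor is not controlled where you need it. Third, and most importantly, knowing that each of the four factors has argument in $(-\pi/2,\pi/2)$ does \emph{not} put the argument of the product in $(-\pi,\pi]$; the sum of the four arguments can reach $\pm 2\pi$, in which case the principal-branch $\Im\log$ of the product differs from $\ash$ by a multiple of $2\pi$. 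To handle this case you must work with the \emph{sum of individual arguments} — for $v=-1$ the fraction collapses to $\bar D/D$ with $D=(1+z_0)(1-\bar z_0 w)$, giving $\ash = -2[\arg(1+z_0)+\arg(1-\bar z_0 w)]$ — and then prove the bound $|\ash|\le c\,|w+1|\,|z|$ (and its refinements) directly from this closed form, rather than from a power series that no longer converges or represents $\ash$. The remark about the lift-fixing convention does not resolve this; $\ash$ is lift-independent by construction. As written, the $v=-1$, $|z|>\tfrac13$ conclusion is asserted but not proved.
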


\subsection{Regularized phase functions}

Because of the scaling in (\ref{eq:limit1}) we will set
\[\Lambda
=\mu_n +\frac{\lambda}{4n_0^{{1/2}}}.\]
We introduce the following operators
\begin{eqnarray*}
\mbf{J}_\ell=\mbf{Q}(\pi) \mbf{A}(s_\ell/p_\ell, \mu_n/s_\ell),& \quad &\mbf{M}_\ell=\mbf{A}( (1+X_\ell/p_\ell)^{-1},\lambda/(4 n_0^{1/2} p_\ell))\mbf{A}(\frac{p_{\ell}}{p_{\ell+1}},0),\\\hat {\mbf{J}}_\ell=\mbf{Q}(\pi) \mbf{A}(p_{\ell}/s_\ell, \mu_n/p_{\ell}),
& \quad &\hat {\mbf{M}}_\ell=\mbf{A}( (1+Y_\ell/s_\ell)^{-1},\lambda/(4 n_0^{1/2}s_{\ell})).
\end{eqnarray*}
Then (\ref{eq:recrr}) and (\ref{eq:recr}) can be rewritten as 
\[
r_{\ell+1}=r_\ell\ldot {\mbf{J}}_\ell {\mbf{M}}_\ell \hat{\mbf{J}}_\ell \hat {\mbf{M}}_\ell, \qquad r_0=\infty.
\]
(We suppressed the $\lambda$ dependence in $r$ and the operators $\mbf{M}, \hat {\mbf{M}}$.) Lifting these recursions from $\partial \HH$ to $\R$ we get the evolution of the corresponding phase angle which we denote by $\phi_{\ell}=\phi_{\ell,\lambda}$.
\begin{equation}
\phi_{\ell+1}=\phi_\ell\lstar {\mbf{J}}_\ell {\mbf{M}}_\ell \hat{\mbf{J}}_\ell \hat {\mbf{M}}_\ell, \qquad \phi_0=-\pi.
\end{equation}
Solving the recursion from the other end, with end condition $0$ we get the target phase function $\phi^{\odot}_{\ell,\lambda}$:
\begin{equation}\label{eq:target1}
\phi^{\odot}_{\ell}=\phi^{\odot}_{\ell+1}\lstar  \hat {\mbf{M}}_\ell^{-1} \hat{\mbf{J}}_\ell^{-1}  {\mbf{M}}_\ell^{-1}  {\mbf{J}}_\ell^{-1}, \qquad \phi^{\odot}_n=0.
\end{equation}
It is clear that $\phi_{\ell, \lambda}$ and $\phi^{\odot}_{\ell, \lambda}$ are independent for a fixed $\ell$ (as functions in $\lambda$), they are monotone and analytic in $\lambda$ and we can count eigenvalues using the formula (\ref{evcount}).

Note that ${\mbf{J}}_\ell$ and $\hat{\mbf{J}}_\ell$ do not depend on $\lambda$ and they are not infinitesimal. The main part of the evolution is ${\mbf{J}}_\ell \hat {\mbf{J}}_{\ell}$. This is a rotation in the hyperbolic plane if it only has one fixed point in $\HH$. The fixed point equation $\rho_\ell=\rho_\ell\ldot {\mbf{J}}_\ell \hat {\mbf{J}}_{\ell}$ can be rewritten as 
\[
\rho_\ell=\frac{\rho_\ell s_\ell (s_\ell-\mu_n )+p_\ell \mu_n }{s_\ell (p_\ell-\rho_\ell s_\ell)}.
\]
This can be solved explicitly, and  one gets the 
 following unique solution in the upper half plane if  $\ell< n_0$:
\begin{equation}\label{def:rho}
\rho_\ell=
\frac{\mu_n^2-m+n}{2\mu_n s_\ell}+i \sqrt{1-\frac{(\mu_n^2-m+n)^2}{4\mu_n^2s_\ell^2}}.
\end{equation}
(One also needs to use the identity $p_\ell^2-s_\ell^2=m-n$.)
This shows that if $\ell< n_0$ then ${\mbf{J}}_\ell\hat{\mbf{J}}_\ell$ is a rotation in the hyperbolic plane. We can move the center of rotation to 0 in $\UU$ by conjugating it with an appropriate affine transformation:
\[
\mbf{J}_\ell\hat {\mbf{J}}_\ell=\mbf{Q}(-2 \arg(\rho_\ell \hat \rho_\ell))^{\mbf{T}_\ell^{-1}}.
\]
Here ${\mbf{T}}_\ell={\mbf{A}}(\Im(\rho_\ell)^{-1}, -\Re \rho_\ell)$, $\mbf{X}^\mbf{Y}=\mbf{Y}^{-1}\mbf{X}\mbf{Y}$ and
\begin{equation}\label{def:rrho}
\hat \rho_\ell=\frac{\mu_n^2+m-n}{2\mu_n p_\ell}+i \sqrt{1-\frac{(\mu_n^2+m-n)^2}{4\mu_n^2p_\ell^2}}.
\end{equation}
In order to regularize the evolution of the phase function we introduce 
\[
\varphi_{\ell, \lambda}:=\phi_{\ell,\lambda} \lstar {\mbf{T}}_{\ell} {\mbf{Q}}_{\ell-1}, \qquad 0 \le \ell<n_0
\]
where ${\mbf{Q}}_\ell={\mbf{Q}}(2 \arg(\rho_0\hat \rho_0))\dots {\mbf{Q}}(2 \arg(\rho_\ell\hat \rho_\ell))$ and  ${\mbf{Q}}_{-1}$ is the identity. It is easy to check that the initial condition remains $\vfi_{0,\lambda}=\pi$. Then
\begin{eqnarray*}
\vfi_{\ell+1}&=&\vfi_\ell\lstar {\mbf{Q}}_{\ell-1}^{-1} {\mbf{T}}_{\ell}^{-1}{\mbf{J}}_\ell {\mbf{M}}_\ell \hat{\mbf{J}}_\ell \hat {\mbf{M}}_\ell {\mbf{T}}_{\ell+1} {\mbf{Q}}_{\ell}\\
&=&\vfi_\ell\lstar {\mbf{Q}}_{\ell-1}^{-1} {\mbf{T}}_{\ell}^{-1}{\mbf{Q}}(-2 \arg(\rho_\ell))^{{\mbf{T}}_\ell^{-1}} {\mbf{M}}_\ell^{\hat{\mbf{J}}_\ell} \hat {\mbf{M}}_\ell {\mbf{T}}_\ell {\mbf{T}}_\ell^{-1}{\mbf{T}}_{\ell+1} {\mbf{Q}}_{\ell}\\
&=&\vfi_\ell\lstar \left(\left({\mbf{M}}_\ell^{\hat{\mbf{J}}_\ell}\right)^{{\mbf{T}}_\ell}\right)^{{\mbf{Q}}_\ell} \left((\hat {\mbf{M}}_\ell)^{{\mbf{T}}_\ell}\right)^{{\mbf{Q}}_\ell} \left({\mbf{T}}_\ell^{-1}{\mbf{T}}_{\ell+1}\right)^{ {\mbf{Q}}_{\ell}}
\end{eqnarray*}
Note that the evolution operator is now infinitesimal: ${\mbf{M}}_\ell, \hat {\mbf{M}}_\ell$ and ${\mbf{T}}_\ell^{-1}{\mbf{T}}_{\ell+1}$ are all asymptotically small, and the various conjugations will not change this.

We can also introduce the corresponding target phase function
\begin{equation}\label{eq:target2}
\vfi^{\odot}_{\ell,\lambda}:=\phi^{\odot}_{\ell,\lambda} \lstar {\mbf{T}}_{\ell} {\mbf{Q}}_{\ell-1}, \qquad 0\le \ell<n_0.
\end{equation}
The new, regularized phase functions $\vfi_{\ell, \lambda}$ and $\vfi^{\odot}_{\ell, \lambda}$ 
have the same properties as $\phi, \phi^{\odot}$, i.e.: they
are independent for a fixed $\ell$ (as functions in $\lambda$), they are monotone and analytic  in $\lambda$ and we can count eigenvalues using the formula (\ref{evcount1}).

We will further simplify the evolution using the following identities:
\[
-\frac{a}{r}+b=\left(\frac{b^2+1}{a}r-b\right){\mbf{Q}}\left(\arg\left(\frac{b-i}{b+i}\right)\right), \qquad r\ldot \hat{\mbf{J}}_\ell {\mbf{T}}_\ell=-\frac1{r} \frac{p_\ell}{s_\ell  \Im \rho_\ell}+\frac{\mu_n}{s_\ell \Im \rho_\ell}-\frac{\Re \rho_\ell}{\Im \rho_\ell}.
\]
From this we get
\[
\hat{\mbf{J}}_\ell {\mbf{T}}_\ell=\hat{\mbf{T}}_\ell  {\mbf{Q}}_\ell(-2 \arg(\hat \rho_\ell))
\]
where
\begin{eqnarray*}
r\ldot \hat{\mbf{T}}_\ell&=&\left(\frac{s_\ell}{\Im \rho_\ell p_\ell}-2 \frac{\Re \rho_\ell}{\Im \rho_\ell} \mu_n+\frac{\mu_n^2}{\Im \rho_\ell p_\ell s_\ell}\right) r-\frac{\mu_n}{s_\ell \Im \rho_\ell}+\frac{\Re \rho_\ell}{\Im \rho_\ell}\\
&=&\frac{1}{\Im \hat \rho_\ell} r-\frac{\Re \hat \rho_\ell}{\Im \hat \rho_\ell}.
\end{eqnarray*}
This allows us to write
\begin{equation}
\left(\left({\mbf{M}}_\ell^{\hat{\mbf{J}}_\ell}\right)^{{\mbf{T}}_\ell}\right)^{{\mbf{Q}}_\ell}=({\mbf{M}}_\ell^{\hat{\mbf{T}}_\ell})^{ {\mbf{Q}}(-2 \arg(\hat \rho_{\ell})) {\mbf{Q}}_\ell}=({\mbf{M}}_\ell^{\hat{\mbf{T}}_\ell})^{ \hat  {\mbf{Q}}_\ell}.
\end{equation}
where
\[
\hat {\mbf{Q}}_\ell={\mbf{Q}}_\ell {\mbf{Q}}(-2 \arg(\hat \rho_\ell)).
\]
Thus
\begin{eqnarray*}
\vfi_{\ell+1}
&=&\vfi_\ell\lstar \left({\mbf{M}}_\ell^{\hat{\mbf{T}}_\ell}\right)^{ \hat {\mbf{Q}}_\ell} \left(\hat {\mbf{M}}_\ell^{{\mbf{T}}_\ell}\right)^{{\mbf{Q}}_\ell} \left({\mbf{T}}_\ell^{-1}{\mbf{T}}_{\ell+1}\right)^{ {\mbf{Q}}_{\ell}}.
\end{eqnarray*}
We will introduce the following operators to break up the evolution into smaller pieces:
\begin{eqnarray*}
&&{\mbf{L}}_{\ell, \lambda}={\mbf{A}}(1,\lambda/(4 n_0^{1/2} p_{\ell})),\qquad \hat {\mbf{L}}_{\ell, \lambda}={\mbf{A}}(1,\lambda/(4 n_0^{1/2} s_{\ell})),\\
&&{\mbf{S}}_{\ell,\lambda}={\mbf{L}}_{\ell,\lambda}^{\hat{\mbf{T}}_\ell} \left( {\hat{\mbf{T}}_\ell}^{-1} {\mbf{A}}(\frac{p_{\ell}}{p_{\ell+1}} (1+X_\ell/p_\ell)^{-1},0)\,  \hat{\mbf{T}}_\ell \right),\\
&& \hat {\mbf{S}}_{\ell,\lambda}=\hat {\mbf{L}}_{\ell,\lambda}^{ {\mbf{T}}_\ell} \left( { {\mbf{T}}_\ell}^{-1} {\mbf{A}}( (1+Y_\ell/s_\ell)^{-1},0)\,   {\mbf{T}}_{\ell+1} \right).
\end{eqnarray*}
Then
\begin{eqnarray*}
\vfi_{\ell+1}
&=&\vfi_\ell\lstar \left({\mbf{L}}_\ell^{\hat{\mbf{T}}_\ell}\right)^{ \hat {\mbf{Q}}_\ell}\left({\mbf{S}}_{\ell,0}\right)^{ \hat {\mbf{Q}}_\ell} \left(\hat {\mbf{L}}_\ell^{{\mbf{T}}_\ell}\right)^{{\mbf{Q}}_\ell} \left(\hat {\mbf{S}}_{\ell,0}\right)^{ {\mbf{Q}}_{\ell}}=\vfi_\ell\lstar \left({\mbf{S}}_{\ell,\lambda}\right)^{ \hat {\mbf{Q}}_\ell} \left(\hat {\mbf{S}}_{\ell,\lambda}\right)^{ {\mbf{Q}}_{\ell}}.
\end{eqnarray*}

\section{SDE limit for the phase function} \label{s:sde}

Let $\FF_\ell$ denote the $\sigma$-field generated by $\vfi_{j,\lambda}, j\le \ell$. Then $\vfi_{\ell, \lambda}$ is a Markov chain in $\ell$ with respect to $\FF_{\ell}$. We will show that this Markov chain converges to a diffusion limit after proper normalization. In order to do this we will estimate $\ev\left[ \vfi_{\ell+1,\lambda}-\vfi_{\ell,\lambda}|\FF_\ell\right]$ and $\ev\left[(\vfi_{\ell+1,\lambda}-\vfi_{\ell,\lambda})(\vfi_{\ell+1,\lambda'}-\vfi_{\ell,\lambda'})| \FF_\ell \right]$ using the angular shift lemma, Lemma \ref{lem:ash}.

\subsection{Single step estimates}
Throughout the rest of the proof we will use the notation $k=n_0-\ell$. We will need to rescale the discrete time $n_0$ in order to get a limit, we will use  $t=\ell/n_0$ and also introduce $\hat s(t)=\sqrt{1-t}$.
We start with the identity
\begin{eqnarray*}
p_\ell \Im \hat \rho_\ell=s_\ell \Im \rho_\ell&=&\sqrt{s_\ell^2-
\frac{(\mu_n^2-m+n)^2}{4\mu_n^2}}=\sqrt{n-\frac{(\mu_n^2-m+n)^2}{4\mu_n^2}-\ell-\frac12}\\
&=&\sqrt{n_0-\ell}=\sqrt{k}=\sqrt{n_0} \hat s(t).
\end{eqnarray*}
Note that this means that
\begin{eqnarray}\label{eq:rhodef}
\rho_\ell&=&\pm \sqrt{\frac{n-n_0-1/2}{n-\ell-1/2}}+i \sqrt{\frac{n_0-\ell}{n-\ell-1/2}}
=\pm \sqrt{\frac{n_1}{n_1+k}}+i \sqrt{\frac{k}{n_1+k}},
\\
\hat \rho_\ell&=& \sqrt{\frac{m-n_0-1/2}{m-\ell-1/2}}+i \sqrt{\frac{n_0-\ell}{m-\ell-1/2}}=\sqrt{\frac{m_1}{m_1+k}}+i \sqrt{\frac{k}{m_1+k}}\label{eq:hrhodef}
\end{eqnarray}
where the sign in $\Re \rho_\ell$ is positive if $\mu_n>\sqrt{m-n}$ and negative otherwise.

For the angular shift estimates we need to consider
\begin{eqnarray}\nonumber
Z_{\ell,\lambda}&=&i.{\mbf{S}}_{\ell,\lambda}^{-1}-i=\frac{\hat \rho_\ell X_\ell}{\sqrt{n_0}\hat s(t)}\cdot \frac{p_{\ell+1}}{p_\ell}+\left(-\frac{\lambda}{4n_0\hat s(t)}
+\frac{\hat \rho_\ell(p_{\ell+1}-p_{\ell})}{p_{\ell} \Im\hat\rho_\ell}
\right)=V_\ell+v_\ell,\\
\hat Z_{\ell,\lambda}&=&i. \hat {\mbf{S}}_{\ell,\lambda}^{-1} -i=\frac{\rho_\ell Y_\ell}{\sqrt{n_0}\hat s(t)}+\left(-\frac{\lambda}{4n_0\hat s(t)}+\frac{\rho_{\ell+1}-\rho_{\ell}}{\Im \rho_\ell}\right)=\hat V_\ell+\hat v_\ell.\label{eq:ZZ}
\end{eqnarray}
We have the following estimates for the deterministic part (by Taylor expansion):
\begin{eqnarray*}
v_{\ell,\lambda}&=&\frac{v_{\lambda}(t)}{n_0}+O(k^{-2}), \quad v_{\lambda}(t)=-\frac{\lambda}{4  \hat s(t)}-\frac{\hat \rho(t)}{2 p(t) \hat s(t)}, \quad |v_{\ell,\lambda}|\le  \frac{c}{k},\\
\hat v_{\ell,\lambda}&=&\frac{\hat v_{\lambda}(t)}{n_0}+O(k^{-2}), \quad \hat v_{\lambda}(t)=-\frac{\lambda}{4  \hat s(t)}+\frac{\frac{d}{dt}\rho(t)}{\Im \rho(t)}, \quad |\hat v_{\ell,\lambda}|\le \frac{c}{k},
\end{eqnarray*}
where $p(t)=p^{(n)}(t)=\sqrt{m/n_0-t}$ and
$\rho(t)=\rho^{(n)}(t)$,  $\hat \rho(t)=\hat \rho^n(t)$ are defined by equations (\ref{eq:rhodef}) and (\ref{eq:hrhodef}) with $\ell=n_0 t$.
For the random terms from (\ref{eq:moments}) we get
\begin{eqnarray*}
\ev V_\ell&=&\ev\hat V_\ell=\mathcal{O}(k^{-1/2}(n-\ell)^{-3/2}), \\
\ev V_\ell^{2}&=&\frac1{n_0}q^{(1)}(t)+\mathcal{O}(k^{-1}(n-\ell)^{-1}),   \qquad  \ev \hat V_\ell^{2}=\frac1{n_0}q^{(2)}(t)+\mathcal{O}(k^{-1}(n-\ell)^{-1}),\\
\ev|V_\ell^{2}|&=&\ev|\hat V_\ell^{2}|=\frac1{n_0}q^{(3)}(t)+\mathcal{O}(k^{-1}(n-\ell)^{-1}),\quad \ev|V_\ell^{d}|, \ev|\hat V_\ell^{2}|=\mathcal{O}(k^{-d/2}), \, d=3,4,
\end{eqnarray*}
where the constants in the error term only depend on $\beta$ and 
\begin{equation}
\hskip-30ptq^{(1)}(t)=\frac{2\hat{\rho}(t)^{2}}{\beta \hat s(t)^{2} }, \qquad q^{(2)}(t)=\frac{2\rho(t)^{2}}{\beta  \hat s(t)^{2} },\qquad q^{(3)}(t)=\frac{2}{\beta  \hat s(t)^{2} }.
\end{equation}
We introduce the notations
\[
\vfi_{\ell+\nicefrac{1}{2}\, ,\lambda}=\vfi_\ell\lstar \left({\mbf{S}}_{\ell,\lambda}\right)^{ \hat {\mbf{Q}}_\ell},\qquad \FF_{\ell+\half}=\sigma(\FF_\ell \cup \{\vfi_{\ell+\half,\lambda}\})
\]
and $\Delta_{\half} f_{x,\lambda}=f_{x+\nicefrac{1}{2}\, ,\lambda}-f_{x,\lambda}$, $\Delta f_{x,\lambda}=f_{x+1 ,\lambda}-f_{x,\lambda}$. 
We also set for $\ell \in \Z_+$
\[
\eta_\ell=\rho_0^2 \hat\rho_0^2\rho_1^2\hat\rho_1^2\dots \rho_\ell^2 \hat\rho_\ell^2.
\]
\begin{remark}
We would like to note that the `half-step' evolution rules $\vfi_{\ell,\lambda}\to \vfi_{\ell+\half,\lambda}$, $\vfi_{\ell+\half,\lambda}\to \vfi_{\ell+1,\lambda}$ are \emph{very similar} to the one-step evolution of the phase function $\vfi$ in \cite{carousel}. Besides the fact that the $\ell\to \ell+\half$ and $\ell+\half\to \ell+1$ steps are quite different from each other the other big difference between our case and \cite{carousel} is that here the oscillating terms $\mbf{Q}_\ell, \hat{\mbf{Q}}_\ell$ are  more complicated.

\end{remark}

The following proposition is the analogue of Proposition 22 in \cite{carousel}.
\begin{proposition}\label{prop:onestepfi}
For $\ell\le n_0$ we have
\begin{eqnarray*}
&&\ev\left[\Delta_{\half} \vfi_{\ell,\lambda}\big| \vfi_{\ell, \lambda}=x \right]=
\frac1{n_0}b^{(1)}_\lambda(t)+\frac1{n_0} osc^{(1)}+\mathcal{O}(k^{-3/2})=\mathcal{O}(k^{-1})
\\
&&\ev\left[\Delta_{\half}\vfi_{\ell,\lambda} \Delta_{\half} \vfi_{\ell,\lambda'}\big| \vfi_{\ell, \lambda}=x, \vfi_{\ell, \lambda'}=y \right]=\frac1{n_0} a^{(1)}(t,x,y)+\frac1{n_0} osc^{(2)}+\mathcal{O}(k^{-3/2})\\
&&\ev\left[\Delta_{\half} \vfi_{\ell+\half,\lambda}\big| \vfi_{\ell+\half, \lambda}=x \right]=
\frac1{n_0}b^{(2)}_\lambda(t)+\frac1{n_0} osc^{(3)}+\mathcal{O}(k^{-3/2})=\mathcal{O}(k^{-1})
\\
&&\ev\left[\Delta_{\half}\vfi_{\ell+\half,\lambda} \Delta_{\half} \vfi_{\ell+\half,\lambda'}\big| \vfi_{\ell+\half, \lambda}=x, \vfi_{\ell+\half, \lambda'}=y \right]=\frac1{n_0} a^{(2)}(t,x,y)+\frac1{n_0} osc^{(4)}+\mathcal{O}(k^{-3/2}),\\
&&\ev \left[|\Delta_{\half} \vfi_{\ell,\lambda}|^d\big|
\vfi_{\ell, \lambda}=x
\right]
, \ev \left[ |\Delta_{\half} \vfi_{\ell+\half,\lambda}|^d\big|
\vfi_{\ell, \lambda}=x
\right]=\mathcal{O}(k^{-d/2}), \qquad d=2,3
\end{eqnarray*}
where $t=\ell/n_0$, 
\begin{eqnarray*}
&b_\lambda^{(1)}=\frac{\lambda}{4 \hat s}+\frac{\Re \hat \rho}{2p \hat s}+\frac{\Im \hat \rho^2}{2 \beta \hat s^2},
\qquad &b_\lambda^{(2)}=\frac{\lambda}{4 \hat s}-\frac{\Re{\frac{d}{dt} \rho}}{\Im \rho}+\frac{\Im  \rho^2}{2 \beta \hat s^2},\\
&a^{(1)}=\frac{1}{\beta \hat s^2} \Re\left[e^{i (x-y)}\right]+\frac{1+\Re \hat \rho^2}{\beta \hat s^2},\qquad &a^{(2)}=\frac{1}{\beta \hat s^2} \Re\left[e^{i (x-y)}\right]+\frac{1+\Re   \rho^2}{\beta \hat s^2}.
\end{eqnarray*}
The oscillatory terms are
\begin{eqnarray*}
osc^{(1)}&=&\Re((- v_\ell- i q^{(1)}/2)e^{-ix} \hat \rho_\ell^{-2} \eta_\ell)+\Re(i e^{-2ix} \hat \rho_\ell^{-4}\eta_\ell^2 q^{(1)})/4, \\
osc^{(2)}&=&q^{(3)} \Re(e^{-ix} \hat \rho_\ell^{-2}\eta_\ell+e^{-i y} \hat \rho_\ell^{-2}\eta_\ell)/2+\Re(q^{(1)}(e^{-ix} \hat \rho_\ell^{-2}\eta_\ell+e^{-i y} \hat \rho_\ell^{-2}\eta_\ell+e^{-i(x+y)}\hat \rho_\ell^{-4}\eta_\ell^2))/2,\\
osc^{(3)}&=&\Re((- \hat v_{\ell}- i q^{(2)}/2)e^{-ix} \eta_{\ell})+\Re(i e^{-2ix} \eta_{\ell}^2 q^{(2)})/4, \\
osc^{(4)}&=&q^{(3)} \Re(e^{-ix} \eta_{\ell}+e^{-i y} \eta_{\ell})/2+\Re(q^{(2)}(e^{-ix} \eta_{\ell}+e^{-i y} \eta_{\ell}+e^{-i(x+y)}\eta_{\ell}^2))/2.
\end{eqnarray*}
\end{proposition}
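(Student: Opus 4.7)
The plan is to apply the angular shift lemma (Lemma \ref{lem:ash}) directly to the two half-step evolution operators and separate the result into its smooth ($x$-independent) and oscillatory ($x$-dependent) parts. The starting structural observation is that each of $\mbf{S}_{\ell,\lambda}$, $\hat{\mbf{S}}_{\ell,\lambda}$ is an affine map on $\HH$, being a product of maps of the form $\mbf{A}(\cdot,\cdot)$ and of conjugates of such by the affine $\hat{\mbf{T}}_\ell,\mbf{T}_\ell$. By the lifting convention for $\mbf{A}(a,b)$, these compositions fix $\pi\in\R$ (not merely modulo $2\pi$). Using $|\rho_j|=|\hat\rho_j|=1$, the rotation $\hat{\mbf{Q}}_\ell$ has total angle $\alpha_\ell$ with $e^{i\alpha_\ell}=\eta_\ell/\hat\rho_\ell^2$ and $\mbf{Q}_\ell$ has angle $\beta_\ell$ with $e^{i\beta_\ell}=\eta_\ell$. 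Unwrapping the conjugations $(\mbf{S}_{\ell,\lambda})^{\hat{\mbf{Q}}_\ell}$ and $(\hat{\mbf{S}}_{\ell,\lambda})^{\mbf{Q}_\ell}$ and using $\pi\lstar\mbf{S}_{\ell,\lambda}=\pi$ yields the clean identities
\[
\Delta_{\half}\vfi_{\ell,\lambda}=\ash(\mbf{S}_{\ell,\lambda},\pi,\vfi_{\ell,\lambda}-\alpha_\ell),\qquad \Delta_{\half}\vfi_{\ell+\half,\lambda}=\ash(\hat{\mbf{S}}_{\ell,\lambda},\pi,\vfi_{\ell+\half,\lambda}-\beta_\ell).
\]

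Next I would apply Lemma \ref{lem:ash} with $v=e^{i\pi}=-1$ and $z=Z_{\ell,\lambda}=V_\ell+v_\ell$ as in (\ref{eq:ZZ}); the $v=-1$ exemption in the lemma is essential, because $|Z_{\ell,\lambda}|$ is typically of order $k^{-1/2}$ with heavy-tailed fluctuations from $X_\ell$ and we cannot truncate to $|Z_{\ell,\lambda}|\le\tfrac13$. Writing $\bar w=e^{-i(\vfi_{\ell,\lambda}-\alpha_\ell)}=e^{-ix}\eta_\ell/\hat\rho_\ell^2$, the lemma gives
\[
\ash(\mbf{S}_{\ell,\lambda},-1,w)=-\Re[(\bar w+1)Z_{\ell,\lambda}]+\tfrac14\Im[(1+\bar w)^2 Z_{\ell,\lambda}^2]+\eps_3.
\]
Taking conditional expectation given $\vfi_{\ell,\lambda}=x$ and plugging in $\ev Z_{\ell,\lambda}=v_\ell+\mathcal{O}(k^{-1/2}(n-\ell)^{-3/2})$ and $\ev Z_{\ell,\lambda}^2=q^{(1)}(t)/n_0+\mathcal{O}(k^{-1}(n-\ell)^{-1})$ from (\ref{eq:moments}) splits the answer by dependence on $x$. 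The $x$-independent part, using $\tfrac14\Im q^{(1)}=\Im(\hat\rho^2)/(2\beta\hat s^2)$, collapses to $b^{(1)}_\lambda(t)/n_0+\mathcal{O}(k^{-3/2})$; the $x$-dependent part, upon substituting $\bar w=e^{-ix}\eta_\ell/\hat\rho_\ell^2$ and $\bar w^2=e^{-2ix}\eta_\ell^2/\hat\rho_\ell^4$, matches $osc^{(1)}/n_0$ term-by-term. The second half-step is handled identically after replacing $(\mbf{S}_{\ell,\lambda},Z_{\ell,\lambda},\alpha_\ell,\hat\rho,q^{(1)})$ by $(\hat{\mbf{S}}_{\ell,\lambda},\hat Z_{\ell,\lambda},\beta_\ell,\rho,q^{(2)})$, yielding $b^{(2)}_\lambda/n_0+osc^{(3)}/n_0+\mathcal{O}(k^{-3/2})$.

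For the mixed second moment of the first half-step, the leading stochastic part of $\Delta_{\half}\vfi_{\ell,\lambda}$ is $-\Re[(\bar w+1)V_\ell]$, and the \emph{same} $V_\ell$ (i.e.\ the same $X_\ell$) drives $\Delta_{\half}\vfi_{\ell,\lambda'}$. Multiplying the two and using the identity $\Re[A]\Re[B]=\tfrac12\Re[AB]+\tfrac12\Re[A\bar B]$ reduces the computation to $\ev V_\ell^2\approx q^{(1)}/n_0$ and $\ev|V_\ell|^2\approx q^{(3)}/n_0$ (from (\ref{eq:moments})). The $x,y$-independent piece — in which the cross-conjugate contribution produces $\Re[\bar w\,\overline{\bar w'}]/(\beta\hat s^2)=\Re[e^{i(x-y)}]/(\beta\hat s^2)$ thanks to $|\eta_\ell/\hat\rho_\ell^2|=1$ — collects to $a^{(1)}(t,x,y)/n_0$, while the remaining trigonometric terms make up $osc^{(2)}/n_0$; the second half-step gives $a^{(2)}$ and $osc^{(4)}$ analogously via $\hat V_\ell$. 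The higher moment bounds for $d=2,3$ are then immediate from the crude estimate $|\ash(\mbf{S}_{\ell,\lambda},-1,w)|\le 2c|Z_{\ell,\lambda}|$ together with $\ev|Z_{\ell,\lambda}|^d=\mathcal{O}(k^{-d/2})$.

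The main obstacle is the bookkeeping: aligning the dozen-odd resulting terms with the precise formulas $osc^{(1)},\ldots,osc^{(4)}$ requires careful tracking of complex-arithmetic signs and of the exponential factors $\eta_\ell,\hat\rho_\ell^{-2}$. The one conceptual point that must be verified is that $\mbf{S}_{\ell,\lambda}$ and $\hat{\mbf{S}}_{\ell,\lambda}$ fix $\pi$ as $\R\to\R$ functions, so that the reference point in Lemma \ref{lem:ash} can legitimately be taken as $v=-1$; without this the $|z|\le\tfrac13$ hypothesis would be violated over most of the range of $\ell$ and the whole approach would fail. Beyond these points the argument closely parallels that of Proposition~22 in \cite{carousel}.
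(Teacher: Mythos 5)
Your proof is correct and follows the paper's own route: express $\Delta_{\half}\vfi_{\ell,\lambda}$ as an angular shift with respect to the fixed point $\pi$ of the affine evolution operator, apply Lemma~\ref{lem:ash} (the $v=-1$ exemption being needed exactly as you note, since the proposition is stated down to $k=\mathcal{O}(1)$ and $X_\ell$ is unbounded), and substitute the moment asymptotics~(\ref{eq:moments}) for the random and deterministic parts of $Z_{\ell,\lambda},\hat Z_{\ell,\lambda}$. Your form of the $\ash$ identity, $\Delta_{\half}\vfi_{\ell,\lambda}=\ash\bigl(\mbf{S}_{\ell,\lambda},-1,e^{i\vfi_{\ell,\lambda}}\bar\eta_\ell\hat\rho_\ell^{2}\bigr)$, is also the one consistent with the signs appearing in $b^{(1)}_\lambda$ and $osc^{(1)}$; the paper's displayed identity has the two $\ash$ arguments (and the exponent of $\hat\rho_\ell$) reversed, an apparent typo which your version silently corrects.
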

\begin{proof}
We start with the identity
\[
\vfi_{\ell+\half,\lambda}-\vfi_{\ell,\lambda}=\vfi_{\ell+1,\lambda}\lstar \hat{\mbf{Q}}_{\ell}^{-1}-\vfi_{\ell,\lambda}\lstar \hat{\mbf{Q}}_{\ell}^{-1}=\vfi_{\ell,\lambda}\lstar \hat{\mbf{Q}}_{\ell}^{-1} \mbf{S}_{\ell, \lambda}
-\vfi_{\ell,\lambda}\lstar \hat{\mbf{Q}}_{\ell}^{-1}=\ash( \mbf{S}_{\ell, \lambda}, e^{i \vfi_{\ell,\lambda}} \bar \eta_\ell \hat \rho_\ell^{-2},-1).
\]
Here we used the definition of the angular shift with the fact that $\mbf{S}_{\ell, \lambda}$ (and any affine transformation) will preserve $\infty\in \HH$ which corresponds to $-1$ in $\UU$. A similar identity can be proved for $\Delta_{\half}\vfi_{\ell+\half,\lambda}$.

The proof now follows exactly the same as in \cite{carousel}, it is a straightforward application of Lemma \ref{lem:ash} using the estimates on $v_{\ell, \lambda}, \hat v_{\ell, \lambda}$, $V_{\ell}, \hat V_{\ell}$.
\end{proof}

\subsection{The continuum limit}

In this section we will prove that $\vfi^{(n)}(t,\lambda)=\vfi_{\lfloor t n_0 \rfloor, \lambda}$ converges to the solution of a one-parameter family of stochastic differential equations on $t\in[0,1)$. The main tool is the following proposition, proved in \cite{carousel} (based on \cite{SV} and \cite{EthierKurtz}).

\begin{proposition}\label{p_turboEK}
Fix $T>0$, and for each $n\ge 1$ consider a Markov chain
$
X^n_\ell\in \R^d$ with $\ell =1\ldots  \lfloor nT
\rfloor$.
Let $Y^n_\ell(x)$ be distributed as the increment
$X^n_{\ell+1}-x$ given $X^n_\ell=x$. We define
$$
b^n(t,x)= n E[ Y_{\lfloor nt\rfloor}^n(x)],\qquad
a^n(t,x)=nE[ Y_{\lfloor nt\rfloor}^n(x) Y_{\lfloor
nt\rfloor}^n(x)^\textup{T}].
$$
Suppose that as $n\to \infty $ we have
\begin{eqnarray}
|a^n(t,x)-a^n(t,y)|+|b^n(t,x)-b^n(t,y)|&\le&
c|x-y|+o(1)\label{eq:lip}\\
 \sup_{x,\ell}  E[|Y^n_\ell(x)|^3] &\le& cn^{\nicefrac{-3}{2}} \label{eq:3m},
\end{eqnarray}
and that there are functions $a,b$ from $\R\times[0,T]$ to
$\R^{d^2}, \R^d$ respectively with bounded first and second
derivatives so that
\begin{eqnarray}
\sup_{x\in \R^{d^2},t} \Big|\int_0^t a^n(s,x)\,ds-\int_0^t a(s,x)\,ds
\Big|+\sup_{x\in \R^{d},t} \Big|\int_0^t b^n(s,x)\,ds-\int_0^t b(s,x)\,ds
\Big| &\to& 0. \label{eq:avr}
\end{eqnarray}
Assume also that the initial conditions converge weakly, 
$
X_0^n\cd X_0.
$

Then $(X^n_{\lfloor n t\rfloor}, 0 \le t \le T)$ converges in law
to the unique solution of the SDE
\begin{equation*}
dX = b \,dt + \sigma\, dB, \qquad X(0)=X_0, \quad t\in[0,T],
\end{equation*}
where $B$ is a $d$-dimensional standard Brownian motion and $\sigma: \R^d\times [0,T]$ is a square root of the matrix valued function $a$, i.e.~$a(t,x)=\sigma(t,x)\,\sigma(t,x)^T$.
\end{proposition}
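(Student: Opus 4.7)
The plan is to establish the convergence via the classical Stroock--Varadhan martingale problem approach, in three steps: (i) prove tightness of the rescaled processes $X^n(t) := X^n_{\lfloor nt\rfloor}$ in the Skorokhod space $D([0,T],\R^d)$; (ii) show that every subsequential weak limit solves the martingale problem for the generator of the stated SDE; and (iii) invoke uniqueness of this martingale problem to identify the limit. Combined with the assumed convergence of initial conditions, this gives the claim.

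For (i), the key input is the third-moment bound (\ref{eq:3m}). From $\ev|Y^n_\ell(x)|^3 \le c n^{-3/2}$ together with the uniform bound on $b^n$ inherited from (\ref{eq:lip}) and the convergence (\ref{eq:avr}), one deduces $\ev |X^n_{j+k}-X^n_j|^3 = O((k/n)^{3/2})$, which by the Kolmogorov--Chentsov criterion (or, since the jumps of $X^n(\cdot)$ are of size $O(n^{-1/2})$, by Aldous's criterion) implies tightness and forces every subsequential limit to have continuous sample paths.

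To identify any such limit $X$, fix $f \in C^2_c(\R^d)$ and work with the Doob martingale
\[
M^{f,n}_t = f(X^n(t)) - f(X^n(0)) - \sum_{\ell=0}^{\lfloor nt\rfloor - 1} \ev\bigl[f(X^n_{\ell+1})-f(X^n_\ell)\,\big|\, \FF_\ell\bigr].
\]
A second-order Taylor expansion combined with the definitions of $a^n, b^n$ and the cubic remainder supplied by (\ref{eq:3m}) gives
\[
\ev\bigl[f(X^n_{\ell+1})-f(X^n_\ell)\,\big|\, \FF_\ell\bigr] = \tfrac{1}{n}\Bigl(\nabla f \cdot b^n + \tfrac12 \operatorname{tr}(a^n \nabla^2 f)\Bigr)(\ell/n, X^n_\ell) + O(n^{-3/2}),
\]
so summing over $\ell$ and using (\ref{eq:lip}) to replace $X^n_\ell$ by $X^n(s)$ inside the Riemann sum together with (\ref{eq:avr}) to replace $a^n, b^n$ by their limits, the compensator converges along any weakly converging subsequence to $\int_0^t (\nabla f \cdot b + \tfrac12 \operatorname{tr}(a \nabla^2 f))(s,X_s)\,ds$. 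A uniform $L^2$ bound on $M^{f,n}$ from the second-moment estimate shows that the Skorokhod limit is itself a continuous martingale, so $X$ solves the martingale problem for the generator of the SDE.

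Finally, since $a$ and $b$ have bounded first and second derivatives, there exists a Lipschitz square root $\sigma$ of $a$ (a classical fact for $C^{1,1}$ positive semidefinite matrix-valued functions), so the SDE enjoys pathwise uniqueness; equivalently, uniqueness of the martingale problem follows from the Stroock--Varadhan theorem. Combined with $X_0^n \cd X_0$ this pins down the law of every subsequential limit and upgrades tightness to convergence in law. I expect the main technical obstacle to be the limit identification step, where one must carefully balance the three asymptotic inputs (\ref{eq:lip}), (\ref{eq:3m}), (\ref{eq:avr}) to pass from the discrete compensator to the continuous one while controlling the error uniformly in $t \in [0,T]$.
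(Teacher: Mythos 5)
The paper does not give its own proof of Proposition~\ref{p_turboEK}: it states that the result is ``proved in \cite{carousel} (based on \cite{SV} and \cite{EthierKurtz})''. Those references carry out exactly the route you sketch --- tightness from moment bounds, identification of the limit through the martingale problem, and uniqueness via Stroock--Varadhan/pathwise uniqueness --- so your proposal follows essentially the same approach as the cited proof. Your plan is sound as a sketch, and you correctly identify where the delicacy lies (passing from the discrete compensator to the continuous one using all three of (\ref{eq:lip}), (\ref{eq:3m}), (\ref{eq:avr})).

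Two places worth tightening if you were to fill this out. First, the increment bound $\ev|X^n_{j+k}-X^n_j|^3=O((k/n)^{3/2})$ is not a direct summation of (\ref{eq:3m}); you need to split the increment into its drift and martingale parts and invoke a Rosenthal/Burkholder-type inequality (the drift part contributes $(k/n)^{3}$ and the martingale part contributes $(k/n)^{3/2}+k\,n^{-3/2}$, and one checks $k\,n^{-3/2}\le (k/n)^{3/2}$ for $k\ge 1$). Second, in the identification step the hypothesis (\ref{eq:avr}) controls only time-integrals of $a^n,b^n$, not pointwise convergence; the standard remedy (as in \cite{EthierKurtz}) is to freeze the spatial argument over mesoscopic time blocks using (\ref{eq:lip}) and the path regularity from tightness, apply (\ref{eq:avr}) block by block, and then pass to the limit. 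You flag this as ``the main technical obstacle,'' which is the right diagnosis; a complete write-up would have to make that freezing argument explicit.
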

We will apply this proposition to $\varphi_{\ell,\lambda}$ with $\ell \le n_0(1-\eps)$ and $\ell \in \Z/2$, so the single steps of the proposition correspond to half steps in our setup.

The following lemma shows that the oscillatory terms in the estimates of Proposition \ref{prop:onestepfi} average out in the `long run'. Its proof relies on Proposition \ref{prop:onestepfi} and Lemma \ref{l_sum} of the Appendix.
\begin{lemma}\label{lem:bsum}
Let  $|\lambda|, |\lambda'|\le \bar \lambda$ and $\eps>0$. Then for any $\ell_1\le n_0
(1-\eps)$, $\ell_1 \in \Z$
\begin{align}\label{eq:bsum}
&\frac1{n_0}\sum_{0\le \ell< \ell_1}^{\sim}
\ev\left[\Delta_{\half} \vfi_{\ell,\gl}\,|\,\varphi_{\ell,\gl}=x
\right]
=\frac{1}{n_0}\sum_{\ell=0}^{ \ell_1-1 } b_\gl(t) +\O(n_0^{-\nicefrac{1}{2}}+n_1^{1/2} n_0^{-3/2})\\
&\frac{1}{n_0}\sum_{0\le \ell< \ell_1}^{\sim}
\ev\left[\Delta_{\half}\vfi_{\ell,\gl}\Delta_{\half}
\vfi_{\ell,\gl'}\,|\,\varphi_{\ell,\gl}=x ,\,
\varphi_{\ell,\gl'}=y \right]
=\frac{1}{n_0}\sum_{\ell=0}^{\ell_1-1 }
a(t,x,y)+
\O( n_0^{-\nicefrac{1}{2}}+n_1^{1/2} n_0^{-3/2}) \nonumber
\end{align}
where $t=\ell/n_0$, the functions $b_\gl, a$ are defined as
\begin{eqnarray}\label{eq:coeffs}
b_\lambda&=&\frac{\lambda}{2 \hat s}+\frac{\Re \hat \rho}{2p \hat s}+\frac{\Im (\hat \rho^2+\rho^2)}{2 \beta \hat s^2}-\frac{\Re{\frac{d}{dt} \rho}}{\Im \rho}, \quad
a=\frac{2}{\beta \hat s^2} \Re\left[e^{i (x-y)}\right]+\frac{2+\Re (\hat \rho^2+\rho^2)}{\beta \hat s^2},
\end{eqnarray}
and the implicit constants in $\O$ depend
only on  $\eps,\beta, \bar \lambda$. The indices in the summation $\sum\limits^{\sim}$ run through half integers.
\end{lemma}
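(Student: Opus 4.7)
The plan is to combine Proposition \ref{prop:onestepfi} over consecutive half-steps and then control the oscillatory and remainder terms separately. Since $\sum^{\sim}$ runs through half-integers, each integer $\ell\in[0,\ell_1)$ contributes two summands, one from the $\ell\to\ell+\half$ step (with coefficients $b^{(1)}_\lambda,a^{(1)}, osc^{(1)},osc^{(2)}$) and one from the $\ell+\half\to\ell+1$ step (with coefficients $b^{(2)}_\lambda,a^{(2)},osc^{(3)},osc^{(4)}$). A direct algebraic check against the formulas in Proposition \ref{prop:onestepfi} and (\ref{eq:coeffs}) shows
\[
b^{(1)}_\lambda(t)+b^{(2)}_\lambda(t)=b_\lambda(t), \qquad a^{(1)}(t,x,y)+a^{(2)}(t,x,y)=a(t,x,y),
\]
which reproduces the leading deterministic Riemann sums in (\ref{eq:bsum}).

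Next I would dispose of the $\mathcal O(k^{-3/2})$ remainders. With $k=n_0-\ell\ge n_0\eps$ throughout, the comparison
\[
\sum_{\ell=0}^{\ell_1-1}k^{-3/2}\;\le\;\int_0^{\ell_1}(n_0-u)^{-3/2}\,du=\mathcal O\!\left((n_0\eps)^{-1/2}\right)
\]
together with the $1/n_0$ prefactor gives a contribution of order $n_0^{-3/2}\eps^{-1/2}$, which is absorbed into the $\mathcal O(n_0^{-1/2})$ term (with implicit constant depending on $\eps,\beta,\bar\lambda$).

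The main work is then the oscillatory sums. Each of $osc^{(1)},\ldots,osc^{(4)}$ is a finite linear combination of terms of the form $\Re\!\bigl(g_\ell\,\eta_\ell^{p}\bigr)$ or $\Re\!\bigl(g_\ell\,\hat\rho_\ell^{-2p}\eta_\ell^{p}\bigr)$ with $p\in\{1,2\}$ and slowly varying $g_\ell$. Using (\ref{eq:rhodef})--(\ref{eq:hrhodef}) and writing $\rho_\ell=e^{i\theta_\ell}$, $\hat\rho_\ell=e^{i\hat\theta_\ell}$, we have $\eta_\ell^{p}=\exp\!\bigl(2ip\sum_{j\le\ell}(\theta_j+\hat\theta_j)\bigr)$, and the pair of angles $(\theta_\ell,\hat\theta_\ell)$ stays in a compact subset of $(0,\pi)^2$ provided $\ell\le n_0(1-\eps)$ (which gives $k\ge n_0\eps$) and (\ref{m1}) holds. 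Hence the oscillation frequency of $\eta_\ell^{p}$ is bounded below and Lemma \ref{l_sum} of the Appendix applies, producing a bound of the shape $\max_\ell|g_\ell|+\sum_\ell|g_{\ell+1}-g_\ell|$. The pointwise part, divided by $n_0$, contributes $\mathcal O(n_0^{-1})$. For the total-variation part, differentiating (\ref{eq:rhodef}) in $\ell$ yields $|\partial_\ell\rho_\ell|=\mathcal O\!\bigl(n_1^{1/2}(n_1+k)^{-3/2}\bigr)$ and analogously for $\hat\rho_\ell$; summing and dividing by $n_0$ produces the advertised $\mathcal O(n_1^{1/2}n_0^{-3/2})$ error.

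The main obstacle is this last total variation estimate: the coefficients $g_\ell$ depend on $\rho_\ell,\hat\rho_\ell$, the deterministic drifts $v_\ell,\hat v_\ell$, and the variance proxies $q^{(i)}$, and a naive bound $\sum|g_{\ell+1}-g_\ell|=\mathcal O(1)$ would only give $\mathcal O(n_0^{-1})$. Extracting the sharper $n_1^{1/2}n_0^{-3/2}$ requires tracking the precise $\ell$-derivative of $\rho_\ell$ in the regime where $\Re\rho_\ell=\pm\sqrt{n_1/(n_1+k)}$ may be near $\pm 1$ (i.e.\ $n_1$ large), since this is what governs how fast the oscillation frequency and the amplitude $g_\ell$ themselves vary. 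With that estimate in hand, assembling the four contributions (leading terms, $k^{-3/2}$ remainders, pointwise oscillatory bound, and total-variation oscillatory bound) yields (\ref{eq:bsum}).
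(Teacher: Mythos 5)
Your handling of the $\eta_\ell$ (first-power) oscillatory sums matches the paper and is sound, but there is a genuine gap in the treatment of the $\eta_\ell^2$ sums coming from $osc^{(1)},\dots,osc^{(4)}$. You assert that "the oscillation frequency of $\eta_\ell^{p}$ is bounded below" on $k\ge n_0\eps$ for $p\in\{1,2\}$, and hence that Lemma~\ref{l_sum} gives a uniformly bounded factor in front of $\max|g_\ell|+\sum|g_{\ell+1}-g_\ell|$. This is false for $p=2$: since $\rho_\ell$ and $\hat\rho_\ell$ lie on the unit circle, at $k=\sqrt{m_1 n_1}$ one has $\Re(\rho_\ell\hat\rho_\ell)=0$, hence $\rho_\ell^2\hat\rho_\ell^2=-1$ and $\eta_\ell^2$ stops rotating entirely. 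This resonance point can perfectly well lie inside $[n_0\eps,n_0]$ (e.g.\ whenever $m_1$ and $n_1$ are both of order $n_0$), and Lemma~\ref{l_sum} reflects this honestly: the bound on $|F^{(2)}_{\ell_1,\ell_2}|$ carries the factor $|\rho_\ell^2\hat\rho_\ell^2+1|^{-1}$, which blows up near the resonance. So the abstract partial-summation machinery alone cannot deliver a uniform $\mathcal O(n_0^{-1/2}+n_1^{1/2}n_0^{-3/2})$ estimate for $\zeta_2$.

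The missing ingredient is the compensating vanishing of the \emph{amplitude}. Computing $e_{2,\ell}$ explicitly gives the identity
\[
|e_{2,\ell}|=\frac{1}{2\beta}\,\frac{n_0}{k}\,\bigl|\hat\rho_\ell^2\rho_\ell^2+1\bigr|,
\]
so precisely where the oscillation degenerates the coefficient $e_{2,\ell}$ becomes small. The paper exploits this by cutting the range of summation at the (at most two) indices where $|\hat\rho_\ell^2\rho_\ell^2+1|=n_0^{-1/2}$: on the non-resonant stretches $|F^{(2)}|\le c(n_0^{1/2}+1+n_1^{1/2}n_0^{-1/2})$ and Abel summation via Lemma~\ref{l_sum} applies, while on the resonant stretch the direct bound $\sum k^{-1}|\hat\rho_\ell^2\rho_\ell^2+1|\le c\,n_0^{-1/2}$ (since $k\ge n_0\eps$) finishes the job. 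Without this amplitude--frequency cancellation and the accompanying case split, the $\zeta_2$ bound does not close. Your derivative estimate $|\partial_\ell\rho_\ell|=\mathcal O(n_1^{1/2}(n_1+k)^{-3/2})$ is also only correct for the real part; the imaginary part's derivative is $\mathcal O(n_1k^{-1/2}(n_1+k)^{-3/2})$, which can dominate, though this is a secondary point compared to the resonance issue.
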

\begin{proof}[Proof of Lemma \ref{lem:bsum}]
We will only prove the first statement, the second one being similar. Note that $b_\lambda(t)=b^{(1)}_\lambda(t)+b^{(2)}_\lambda(t)$.

Summing the first and third estimates in Proposition \ref{prop:onestepfi} we get (\ref{eq:bsum}) with an error term
\begin{eqnarray}\label{bsum1}
\frac1{n_0}\sum_{\ell=0}^{\ell_1-1}  \Re (e_{1,\ell}\,
\eta_{\ell})+\frac1{n_0}\sum_{\ell=0}^{\ell_1-1}  \Re
(e_{2,\ell} \,\eta_{\ell}^2)+\O(n_0^{\nicefrac{-1}{2}}),
\end{eqnarray}
where the first two terms will be denoted
$\zeta_1,\zeta_2$.
Here
$$e_{1,\ell}=\left((- v_\lambda- i q^{(1)}/2)\hat \rho_\ell^2+(- \hat v_{\lambda}- i q^{(2)}/2)\right)e^{-i  x},\qquad e_{2,\ell}=i (\hat \rho_\ell^{-4} q^{(1)}+q^{(2)}) e^{- 2i
x}/4$$
where for this proof $c$
denotes varying constants depending on $\eps$. Using the
fact that $v_\gl, \hat v_\lambda,  q^{(1)}, q^{(2)}$ and their first derivatives are
continuous on $[0,1-\eps]$
 we get
\begin{equation}\label{eilbounds}
|e_{i,\ell}|<c,\qquad |e_{i,\ell}-e_{i,\ell+1}|<c
n_0^{-1}.\end{equation}
Applying Lemma \ref{l_sum} of the Appendix to the first sum in (\ref{bsum1}):
\[
|\zeta_1|\le \frac1{n_0} |e_{1,\ell_1}| |F^{(1)}_{1,\ell_1}|+\frac1{n_0}\sum_{\ell=1}^{\ell_1-1}|e_{1,\ell}-e_{1,\ell+1}| |F^{(1)}_{1,\ell} |.
\]
Since $\ell_1\le n_0(1-\eps)$ we have $|F^{(1)}_{1,\ell}|\le c(1+n_1^{1/2} k^{-1/2})\le c(n_1^{1/2} n_0^{-1/2}+1)$ and
\[
|\zeta_1|\le c (n_0^{-3/2} n_1^{1/2}+n_0^{-1}).
\]
(Recall that $k=n_0-\ell$.) For the estimate of $\zeta_2$ we
first note that
\begin{equation}
|e_{2,\ell}|=\frac{1}{2\beta} \frac{n_0}{k}|\hat \rho_\ell^{-2}+\rho_\ell^2|=\frac{1}{2\beta} \frac{n_0}{k}|\hat \rho_\ell^{2}\rho_\ell^2+1|.\label{eq:direct}
\end{equation}
We will use Lemma \ref{l_sum} if $|\hat \rho_\ell^{2}\rho_\ell^2+1|$ is `big', and a direct bound with (\ref{eq:direct}) if it is small.
To be more precise: we divide the sum into three pieces, we cut it at indices $\ell_1^*$ and $\ell_2^*$ so that
\begin{equation}\label{case1a}
|\hat \rho_\ell^{2}\rho_\ell^2+1|\le n_0^{-1/2} \quad \textup{if } k\in[k_2^*,k_1^*]\qquad \textup{and }|\hat \rho_\ell^{2}\rho_\ell^2+1|\ge n_0^{-1/2} \quad \textup{otherwise}.
\end{equation}
Note that one or two of the resulting partial sums may be empty.  We can always find such indices because $\arg \hat \rho_\ell^{2}\rho_\ell^{2}$ is monotone if $\mu_n\ge \sqrt{m-n}$ and if  $\mu_n< \sqrt{m-n}$
then $\arg \hat \rho_\ell^{2}\rho_\ell^{2}$ decreases if $k>\sqrt{m_1 n_1}$ then it {increases}. (See the proof of Lemma \ref{l_sum}.)

We denote the three pieces by $\zeta_{2,i}, i=1,2,3$ and bound them separately. Since $k\ge \eps n_0$, Lemma \ref{l_sum} gives
\[
|\zeta_{2,1}|\le c (n_1^{1/2} n_0^{-3/2}+n_0^{-1/2}).
\] 
The term $|\zeta_{2,3}|$ can be bounded exactly the same way, so we only need to deal with $\zeta_{2,2}$.  Here we use (\ref{eq:direct}) to get a  direct estimate:
\begin{eqnarray*}
|\zeta_{2,2}|&\le& \frac1{2\beta}   \sum_{k\in[k_2^*,k_1^*]\cap[\eps n_0,n_0]} \frac1{k } |\hat \rho_\ell^{-2}+\rho_\ell^2|\le c n_0^{-1/2}.
\end{eqnarray*}
Collecting all our estimates the statement follows.
%
%
%
\end{proof}
Now we have the ingredients to prove the continuum limit.
\begin{proposition}\label{prop:phisde}
Suppose that $m/n_0\to \kappa \in[1,\infty]$, 
$n/n_0\to \nu \in [1,\infty]$ and that eventually $\mu_n>\sqrt{m-n}$ or $\mu_n\le \sqrt{m-n}$.
Then the continuous functions $p^{(n)}(t)^{-1}, \rho^{(n)}(t), \hat \rho^{(n)}(t)$  converge to following limits on $[0,1)$:
\[
p^{-1}(t)=(\kappa-t)^{-1/2},\quad \rho(t)=\pm \sqrt{\frac{\nu-1}{\nu-t}}+i \sqrt{\frac{1-t}{\nu-t}},\quad \hat \rho(t)=\sqrt{\frac{\kappa-1}{\kappa-t}}+i \sqrt{\frac{1-t}{\kappa-t}},
\]
where the sign in $\Re \rho$ depends on the (eventual) sign of $\mu_n-\sqrt{m-n}$.
If $\kappa=\infty$ then $p^{-1}(t)=0$, $\hat \rho(t)=1$ and if $\nu=\infty$ then $\rho(t)=\pm 1$.

Let $B$ and $W$ be  independent real and complex standard Brownian motions,
and for each $\lambda\in \R$ consider the strong solution of
\begin{eqnarray}\label{eq:sdelimfi}
  d \varphi_{\lambda}
  &=&
\left[ \frac{\lambda}{2\hat s}-\frac{\Re \rho'}{\Im \rho}
+\frac{\Im (\rho^2+\hat \rho^2)}{2\beta \hat s ^2}+\frac{\Re \hat \rho}{2p \hat s} \right]dt +
\frac{\sqrt2\Re(e^{-i\varphi_\lambda}dW)}{\sqrt{\beta}\,\hat s}
  +\frac{\sqrt{2+\Re (\rho^2+\hat \rho^2)}}{\sqrt{\beta}\,\hat s}dB, \nonumber
  \\[.4em]
  \varphi_{\lambda}(0)&=&\pi.
\end{eqnarray}
Then we have
$$
 \varphi_{\gl, \lfloor n_0 t\rfloor }\cd \varphi_\lambda(t),\qquad \textup{as  $n\to \infty$},
$$
where the convergence is in the sense of finite dimensional
distributions for $\lambda$ and in path-space $D[0,1)$ for
$t$.
\end{proposition}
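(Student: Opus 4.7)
The strategy is to apply Proposition \ref{p_turboEK} to the $d$-dimensional Markov chain $(\vfi_{\ell/2, \lambda_1}, \dots, \vfi_{\ell/2, \lambda_d})$ indexed by half-integers $\ell/2 \in \{0, \half, 1, \dots, n_0(1-\eps)\}$, so that each single step of the chain corresponds to one of the half-steps $\ell\to \ell+\half$ or $\ell+\half\to \ell+1$ whose increments were analyzed in Proposition \ref{prop:onestepfi}. Since each half-step produces drift and covariance of order $1/n_0$, the correct choice in Proposition \ref{p_turboEK} is to take $2n_0$ steps per unit of $t$ with horizon $T = 1-\eps$. The convergence on $D[0,1-\eps]$ for every $\eps>0$ then upgrades to convergence on $D[0,1)$.

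For the deterministic inputs, a direct computation from (\ref{eq:rhodef})--(\ref{eq:hrhodef}) with $\ell = n_0 t$ under the assumptions $m/n_0\to \kappa$, $n/n_0\to \nu$ produces the claimed limits $p^{-1}(t), \rho(t), \hat\rho(t)$ uniformly on compact subsets of $[0,1)$. The sign of $\Re\rho$ is inherited from the eventual sign of $\mu_n - \sqrt{m-n}$, and the degenerate cases $\kappa=\infty$ or $\nu=\infty$ are immediate. As a consequence, the functions $b_\lambda$ and $a$ of Lemma \ref{lem:bsum} converge to the drift and diffusion coefficients appearing in (\ref{eq:sdelimfi}), which are smooth on $[0,1-\eps]$ with bounded first and second derivatives and Lipschitz in $(x,y)$ since they depend on $(x,y)$ only through $\cos(x-y)$.

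The three hypotheses of Proposition \ref{p_turboEK} are then checked in turn. The integrated drift and covariance condition (\ref{eq:avr}) is precisely the content of Lemma \ref{lem:bsum}: the error $\O(n_0^{-1/2} + n_1^{1/2}n_0^{-3/2})$ vanishes because the standing hypothesis $n_1^{1/3} n_0^{-1}\to 0$ forces $n_1 = o(n_0^3)$ and so $n_1^{1/2} n_0^{-3/2} = o(1)$. The third-moment bound (\ref{eq:3m}) with the required $n_0^{-3/2}$ scaling is part of the last line of Proposition \ref{prop:onestepfi}, while (\ref{eq:lip}) is immediate from the explicit form of $a, b_\lambda$. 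The initial condition $\vfi_{0,\lambda} = \pi$ is deterministic and $\lambda$-independent, so it converges trivially.

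It remains to match the limit SDE. The joint covariance $a(t,x,y) = \frac{2}{\beta\hat s^2}\cos(x-y) + \frac{2+\Re(\rho^2+\hat\rho^2)}{\beta\hat s^2}$ decomposes as a sum of two contributions: the $\cos(x-y)$ piece is the cross-variation of $\frac{\sqrt 2}{\sqrt\beta\,\hat s}\,\Re(e^{-i\vfi_\lambda}dW)$ across different $\lambda$ (using $\Re(e^{-ix}dW)\cdot\Re(e^{-iy}dW) = \cos(x-y)\,dt$ for a standard complex Brownian motion $W$), while the $\lambda$-independent piece is the self cross-variation of $\frac{\sqrt{2+\Re(\rho^2+\hat\rho^2)}}{\sqrt\beta\,\hat s}\,dB$ for a real Brownian motion $B$ independent of $W$. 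This exhibits a valid square root of the matrix $a$, so the weak limit of the rescaled chain is the strong solution of (\ref{eq:sdelimfi}); finite-dimensional convergence in $\lambda$ is automatic from running the chain jointly in the chosen $\lambda_i$. The main obstacle is the averaging of the oscillatory terms $osc^{(i)}$ appearing in Proposition \ref{prop:onestepfi}, which is handled by the summation-by-parts argument in the proof of Lemma \ref{lem:bsum}; the scaling condition $n_1^{1/3} n_0^{-1}\to 0$ is exactly what keeps the resulting partial sums of $\eta_\ell^k$ under control.
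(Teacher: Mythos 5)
Your proposal is correct and follows essentially the same route as the paper's own proof: reducing to Proposition \ref{p_turboEK} applied to the half-integer-indexed chain $(\vfi_{\ell,\lambda_1},\dots,\vfi_{\ell,\lambda_d})$, checking the Lipschitz and third-moment hypotheses from Proposition \ref{prop:onestepfi}, handling the averaging condition via Lemma \ref{lem:bsum}, and then identifying the limit by decomposing the diffusion matrix $a(t,x,y)$ into the $\cos(x-y)$ part carried by $W$ and the constant-in-$(x,y)$ part carried by $B$, using $\ev\Re(\omega_1 Z)\Re(\omega_2 Z)=\Re(\bar\omega_1\omega_2)$. The only added texture in your write-up is that you make explicit the role of the standing hypothesis $n_1^{1/3}n_0^{-1}\to 0$ in killing the $n_1^{1/2}n_0^{-3/2}$ error term from Lemma \ref{lem:bsum}; the paper leaves this implicit.
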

\begin{proof}
The proof is very similar to the proof of Theorem 25 in \cite{carousel}. One needs to check that for any fixed vector $(\lambda_1,\dots, \lambda_d)$ the Markov chain $(\vfi_{\ell, \lambda_i},1\le i \le d),  \ell\le \lfloor (1-\eps) n_0\rfloor$, $\ell \in \Z/2$ satisfies the conditions of Proposition \ref{p_turboEK} and to identify the variance matrix of the limiting diffusion. 
Note that because our Markov chain lives on the half integers one needs to slightly rephrase the proposition, but this is straightforward.

The Lipshitz condition (\ref{eq:lip}) and the moment condition (\ref{eq:3m}) are easy to check from Proposition \ref{prop:onestepfi}. The averaging condition (\ref{eq:avr}) is satisfied because of Lemma \ref{lem:bsum}, using the fact that because of the conditions of the proposition, the functions $b^\lambda(t), a(t,x,y)$ converge. This proves that the rescaled version of  $(\vfi_{\ell, \lambda_j},1\le j \le d)$ converges in distribution to an SDE in $\R^d$ where the drift term is given by the limit of $(b^\lambda_j, j=1\dots d)$ and the diffusion matrix is given by $a(t,x)_{j,k}=\frac{2}{\beta \hat s^2} \Re\left[e^{i (x_k-x_j)}\right]+\frac{2+\Re (\hat \rho^2+\rho^2)}{\beta \hat s^2}$.

The only step left is to verify that the limiting SDE can be rewritten in the form (\ref{eq:sdelimfi}). This follows easily using the fact that if $Z$ is a complex Gaussian with i.i.d.~standard real and imaginary parts and $\omega_1, \omega_2\in \CC$ then
\[
\ev \Re(\omega_1 Z) \Re(\omega_2 Z)=\Re(\bar \omega_1  \omega_2).\qedhere
\]
\end{proof}

The following corollary describes the scaling limit of the relative phase function $\alpha_{\ell, \lambda}$.
\begin{corollary}\label{cor:sineb}
Let $Z$ be a complex Brownian motion with i.i.d.~standard real and imaginary parts and consider the strong solution $\alpha_{\lambda}(t)$ of the SDE system (\ref{sineb}).
Then $\alpha_{\lfloor n_0 t\rfloor, \lambda}\cd \alpha_{\lambda}(t)$ as $n\to \infty$ where the convergence is in the sense of finite dimensional
distributions for $\lambda$ and in path-space $D[0,1)$ for
$t$.
\end{corollary}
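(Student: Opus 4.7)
The plan is to transfer the convergence in Proposition \ref{prop:phisde} to the differences $\alpha_{\ell,\lambda} = \vfi_{\ell,\lambda} - \vfi_{\ell,0}$ and identify the limit as the solution of (\ref{sineb}). First I would apply Proposition \ref{prop:phisde} to the augmented tuple $(\vfi_{\lfloor n_0 t\rfloor, 0}, \vfi_{\lfloor n_0 t\rfloor, \lambda_1}, \ldots, \vfi_{\lfloor n_0 t\rfloor, \lambda_d})$ to obtain joint convergence to $(\varphi_0, \varphi_{\lambda_1}, \ldots, \varphi_{\lambda_d})$, the corresponding strong solutions of (\ref{eq:sdelimfi}) driven by the same pair $(B,W)$. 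Since subtraction is continuous, the continuous mapping theorem yields $\alpha_{\lfloor n_0 t\rfloor, \lambda_i} \cd \tilde\alpha_{\lambda_i}(t) := \varphi_{\lambda_i}(t) - \varphi_0(t)$ jointly for $i=1,\ldots,d$, in finite-dimensional distributions in $\lambda$ and in $D[0,1)$ for $t$.

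The analytic content is then to verify that $\tilde\alpha_\lambda$ satisfies (\ref{sineb}). Subtracting (\ref{eq:sdelimfi}) for $\lambda=0$ from the equation for general $\lambda$, every $\lambda$-independent drift term cancels, and the $dB$ term cancels as well since its coefficient does not depend on $\lambda$. Using the identity $e^{-i\varphi_\lambda} - e^{-i\varphi_0} = e^{-i\varphi_0}(e^{-i\tilde\alpha_\lambda}-1)$, this leaves
\begin{equation*}
d\tilde\alpha_\lambda = \frac{\lambda}{2\hat s}\,dt + \frac{\sqrt{2}}{\sqrt{\beta}\,\hat s}\Re\left[e^{-i\varphi_0}(e^{-i\tilde\alpha_\lambda}-1)\,dW\right], \qquad \tilde\alpha_\lambda(0)=0,
\end{equation*}
with $\hat s(t)=\sqrt{1-t}$. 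Defining $dZ := e^{-i\varphi_0(t)}\,dW(t)$, L\'evy's characterization shows that $Z$ is a complex Brownian motion with i.i.d.\ standard real and imaginary parts: the unimodular integrand preserves the quadratic covariations $d\langle \Re Z, \Re Z\rangle = d\langle \Im Z, \Im Z\rangle = dt$ and $d\langle \Re Z, \Im Z\rangle = 0$. Since $Z$ does not depend on $\lambda$, the family $\{\tilde\alpha_\lambda\}$ is a strong solution of the one-parameter system (\ref{sineb}), and pathwise uniqueness identifies $(\tilde\alpha_{\lambda_1},\ldots,\tilde\alpha_{\lambda_d})$ in law with $(\alpha_{\lambda_1},\ldots,\alpha_{\lambda_d})$.

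The one mildly delicate point, and essentially the only place where care is needed, is that a single Brownian driver $Z$ must work for all $\lambda$ simultaneously. This is automatic here because $\varphi_0$ is $\lambda$-independent, which is precisely what allows the $\lambda$-dependence of the noise term to factor cleanly through a common unimodular rotation. Beyond this observation, the corollary is a direct consequence of Proposition \ref{prop:phisde}.
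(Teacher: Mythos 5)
Your identification of the limiting SDE is correct and follows the same route the paper takes: subtract the $\lambda=0$ equation from (\ref{eq:sdelimfi}), note that only the $\lambda/(2\hat s)$ drift survives and that the $dB$ term cancels because its coefficient is $\lambda$-free, factor $e^{-i\varphi_\lambda}-e^{-i\varphi_0}=e^{-i\varphi_0}(e^{-i\tilde\alpha_\lambda}-1)$, and recognize $dZ=e^{-i\varphi_0}\,dW$ as a complex Brownian motion by L\'evy's criterion (the paper records $Z_t=\int_0^t e^{i\varphi_0}\,dW$, a harmless sign discrepancy). The observation that a single driver $Z$ works for all $\lambda$ because $\varphi_0$ does not depend on $\lambda$ is exactly the right point to stress, and uniqueness in law for (\ref{sineb}) then pins down the law of the limit.

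There is, however, one step you skip that the paper does not: Proposition \ref{prop:phisde} is stated under the additional hypotheses that $m/n_0\to\kappa\in[1,\infty]$, $n/n_0\to\nu\in[1,\infty]$, and that $\mu_n-\sqrt{m-n}$ has an eventual sign. None of these are guaranteed under the assumptions of Theorem \ref{thm:second} (nor by the reductions already made in Remark \ref{rem:subseq}), so you cannot apply Proposition \ref{prop:phisde} to the full sequence directly. The paper's proof opens by reducing to subsequences: it suffices to show that from any subsequence one can extract a further subsequence along which the claimed convergence holds; along such a subsequence one may assume $m/n_0$ and $n/n_0$ converge and $\mu_n-\sqrt{m-n}$ has constant sign, so Proposition \ref{prop:phisde} applies. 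Because the resulting limit law, namely the solution of (\ref{sineb}), is the same for every such subsequence (it does not depend on $\kappa$, $\nu$, or the sign), convergence of the full sequence follows. You should add this reduction; without it, invoking Proposition \ref{prop:phisde} is not justified. Everything after that point in your argument is sound.
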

\begin{proof}
We just need to show that for any subsequence of $n$ we can choose a further subsequence so that the convergence holds. By choosing an appropriate subsequence we can assume that $m/n_0, n/n_0$ both converge and that $\mu_n-\sqrt{m-n}$ is always positive or nonnegative. Then the conditions of Proposition \ref{prop:phisde} are satisfied and $\alpha_\lambda=\vfi_\lambda-\vfi_0$ will satisfy the SDE (\ref{sineb}) with a complex Brownian motion $Z_t:=\int_0^t e^{i \vfi_0(t)} dW_t$. From this the statement of the corollary follows.
\end{proof}

\section{Middle stretch}\label{s:middle}

In this section we will study the behavior of $\alpha_{\ell, \lambda}$ and $\vfi_{\ell, \lambda}$ in the interval $[ \lfloor (1-\eps) n_0 \rfloor , n_2]$ with $n_2=\left\lfloor n_0-\mathcal{K}(n_1^{1/3}{\vee} 1)\right\rfloor$. The constant $\mathcal{K}$ will eventually go to $\infty$, so we can assume that $\mathcal{K}>C_0>0$ with $C_0$ large enough.

\subsection{The relative phase function}
The objective of this subsection is to show that the relative phase function $\alpha_{\ell, \lambda}$ does not change much in the middle stretch. 
\begin{proposition}\label{prop:middle}
There exists a constant $c=c(\bar\lambda,\beta)$ so that with $y=n_0^{-1/2}(n_1^{1/6}{\vee} \log n_0)$ we have
\begin{equation}\mathbb{E}\left[|(\alpha_{\ell_2,\lambda}-\alpha_{\ell_1,\lambda})\wedge 1|\mathcal{F}_{\ell_1}\right]\leq c
\left(d(\alpha_{\ell_1,\lambda},2\pi\mathbb{Z})+\sqrt{\epsilon}+y+\mathcal{K}^{-1}\right)\label{eq:alpha}
\end{equation}
for all $\mathcal{K}>0, \epsilon\in(0,1),\lambda\leq |\bar\lambda|, n_0(1-\eps) \leq l_1 \leq l_2 \leq n_2$, $\ell \in \Z$.
\end{proposition}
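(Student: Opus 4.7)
The strategy is to adapt Proposition 29 of \cite{carousel} to our Laguerre setting. The crucial feature is that the relative phase $\alpha_{\ell,\lambda}=\vfi_{\ell,\lambda}-\vfi_{\ell,0}$ is attracted to $2\pi\Z$: in the limiting SDE (\ref{sineb}) the noise coefficient is proportional to $|e^{-i\alpha_\lambda}-1|$, so the $2\pi\Z$-valued trajectories are stationary and by continuity nearby trajectories remain close. At the discrete level this manifests as the cancellation of the common $dB$-type noise in $\vfi_{\ell,\lambda}-\vfi_{\ell,0}$.

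The first step is to derive single-step estimates for $\Delta\alpha_\ell=\alpha_{\ell+1,\lambda}-\alpha_{\ell,\lambda}$ by subtracting the $\lambda=0$ version of Proposition \ref{prop:onestepfi}. After cancellation,
$$\ev[(\Delta\alpha_\ell)^2\,|\,\FF_\ell]=\frac{2-2\cos\alpha_\ell}{n_0\beta\hat s^2}+\frac{\text{osc}_\ell}{n_0}+O(k^{-3/2}),$$
with $\ev[\Delta\alpha_\ell\,|\,\FF_\ell]$ containing a $\lambda/\hat s$ drift together with a $\sin\alpha_\ell/\hat s^2$ term and $\eta_\ell$-oscillations.

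The second step is to study the Lyapunov function $g(\alpha)=1-\cos\alpha\asymp d(\alpha,2\pi\Z)^2$. A discrete It\^o expansion yields
$$\ev[g(\alpha_{\ell+1})-g(\alpha_\ell)\,|\,\FF_\ell]=\frac{\lambda\sin\alpha_\ell}{2n_0\hat s}+\frac{2(1-g(\alpha_\ell))g(\alpha_\ell)}{n_0\beta\hat s^2}+\frac{\text{osc}_\ell}{n_0}+O(k^{-3/2}).$$
Summing from $\ell_1$ to $\ell\le\ell_2$, treating the oscillatory sums by Abel summation as in Lemma \ref{lem:bsum}, and invoking the constraint $k\ge\mathcal{K}(n_1^{1/3}\vee 1)$, a Gronwall-type argument gives
$$\sqrt{\ev[g(\alpha_\ell)\,|\,\FF_{\ell_1}]}\le c\bigl(d(\alpha_{\ell_1},2\pi\Z)+\sqrt{\eps}+y+\mathcal{K}^{-1}\bigr),$$
where $\sqrt{\eps}$ comes from integrating the $\lambda$-drift over $[(1-\eps)n_0,n_2]$, $y$ from the accumulated $O(k^{-3/2})$ errors, and $\mathcal{K}^{-1}$ from the Abel boundary term $n_1^{1/2}/k$ evaluated at $k=\mathcal{K}n_1^{1/3}$.

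Finally, to convert this $g$-control to the target bound on $|(\alpha_{\ell_2}-\alpha_{\ell_1})\wedge 1|$, I would bound the displacement pointwise by $d(\alpha_{\ell_1},2\pi\Z)+d(\alpha_{\ell_2},2\pi\Z)+2\pi\,\mathbf{1}\{W\}$, where $W$ is the wrapping event that $\alpha_\ell$ exits the $2\pi$-period containing $\alpha_{\ell_1}$ during $[\ell_1,\ell_2]$; the probability of $W$ is handled by a Doob-type maximal inequality applied to $g(\alpha_\ell)$ (after compensating for its drift). The main technical obstacle I anticipate is the oscillatory accounting: near the upper end of the middle stretch $\arg(\rho_\ell\hat\rho_\ell)$ varies slowly, so Abel summation only yields $O(\mathcal{K}^{-1})$ decay rather than $o(1)$, and this is precisely what produces the $\mathcal{K}^{-1}$ term in (\ref{eq:alpha}).
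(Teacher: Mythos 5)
Your proposal takes a genuinely different route via the Lyapunov function $g(\alpha)=1-\cos\alpha$, and while you correctly identify the sources of the $\sqrt{\eps}$, $y$, and $\mathcal{K}^{-1}$ terms and the role of Abel summation against the $\eta_\ell$-oscillations, the core Gronwall step fails. The obstruction is the It\^o correction: writing $\ev[\Delta g(\alpha_\ell)\,|\,\FF_\ell]=\sin\alpha_\ell\,\ev[\Delta\alpha_\ell]+\tfrac12\cos\alpha_\ell\,\ev[(\Delta\alpha_\ell)^2]+\dots$, the second term equals (up to constants) $\cos\alpha_\ell\cdot\frac{g(\alpha_\ell)}{n_0\beta\hat s^2}\asymp g(\alpha_\ell)\,k^{-1}$, and this piece is \emph{not} oscillatory --- it has no $\eta_\ell$ factor, so Abel summation cannot suppress it. The resulting Gronwall coefficients $b_\ell\asymp k^{-1}$ sum to $\log\bigl(\eps n_0/(\mathcal{K}(n_1^{1/3}\vee1))\bigr)$, which is unbounded, and the multiplicative factor $\exp(\sum b_\ell)$ blows up. The only $\hat\alpha$-dependent part of the single-step estimate that is of size $k^{-1}$ is oscillatory (see (\ref{eq:alpha1})); passing to $g$ destroys this structure by injecting the non-oscillatory quadratic-variation term. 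The paper avoids this entirely by running the Gronwall on the conditional mean of $\alpha$ itself (Lemma~\ref{lem:alphasum}), where the non-oscillatory $\hat\alpha$-dependent error is only $\hat\alpha\, k^{-3/2}$, whose sum is $\O(1)$.

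A second, related gap is in the conversion to the displacement bound. You propose to control the wrapping event $W$ via a Doob-type maximal inequality for $g(\alpha_\ell)$, but $g$ is periodic and (as just noted) has a positive drift near $2\pi\Z$, so the maximal inequality would carry the same divergent compensator. The paper instead uses a monotonicity fact that a periodic Lyapunov function cannot see: by decomposing each step into the quarter-steps (\ref{eq:fievo}), the $\mbf{L}$-maps are monotone in $\lambda$ and the $\mbf{S}$-maps are $2\pi$-quasiperiodic, so $\alpha_\ell$ never drops below an integer multiple of $2\pi$ that it has passed. This gives the one-sided control $\alpha_\ell\ge a_\diamondsuit$ for free, reduces the problem to bounding $\ev|\alpha_{\ell_2}-a_\diamondsuit|$ and $\ev(\alpha_{\ell_2}-a^\diamondsuit)^+$, and the latter is handled by stopping at the first passage of $a^\diamondsuit$ together with the strong Markov property --- no wrapping estimate is needed. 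If you want to salvage a Lyapunov-type argument here, you would need a function that is not $2\pi$-periodic and that encodes this monotone structure; $1-\cos\alpha$ does not.
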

Because of the moment bounds (\ref{eq:moments}) we may assume that 
\begin{equation}
|X_\ell|, |Y_\ell|\le \frac{1}{10} \sqrt{n_0} \hat s(\ell/n_0), \quad \textup{for $\ell\le n_2$}.\label{eq:cutoff}
\end{equation}
Indeed, the probability that (\ref{eq:cutoff}) does not hold is at most $c(n_0-n_2)^{-1}\le c \mathcal{K}^{-1}$ which can be absorbed in the error term of (\ref{eq:alpha}).

We first provide the one-step estimates for the evolution of the relative phase function.
\begin{proposition}\label{prop:alpha1step}
There exists $c=c(\beta,\bar\lambda)$ so that for every $\ell\leq n_2$ and $|\lambda|<\bar\lambda$ we have the following estimates
\begin{align}\nonumber
&\mathbb{E}\left(\Delta\alpha_{\ell,\lambda}|\FF_\ell\right)=
-\frac{1}{n_0}\Re\left\{\eta_\ell\left(e^{-i\vfi_{\ell,\lambda}}-e^{-i\vfi_{\ell,0}}\right)\right[\hat\rho_\ell^{-2}\left(v_{\lambda}+iq^{(1)}/2\right)+\left(\hat v_{\lambda}+iq^{(2)}/2\right)\left]\right\}\\ \nonumber
&\quad\quad\quad-\frac{1}{n_0}\Re\left\{i\eta_\ell^2/4\left(e^{-2i\vfi_{\ell,\lambda}}-e^{-2i\vfi_{\ell,0}}\right)\right[\hat\rho_\ell^{-4}q^{(1)}+q^{(2)}\left]\right\} +\mathcal{O}(\hat\alpha_{\ell,\lambda}k^{-3/2}+k^{-1/2}n_0^{-1/2})\\
&\hskip 48mm =\mathcal{O}(\hat\alpha_{\ell,\lambda}k^{-1}+k^{-1/2}n_0^{-1/2}) \label{eq:alpha1}\\
&\mathbb{E}\left(\Delta\alpha_{\ell,\lambda}^2|\FF_\ell\right)=\mathcal{O}(\hat\alpha_{\ell,\lambda}k^{-1}+k^{-1}
n_0^{-1})\label{eq:alpha2}\\
&\mathbb{E}\left(|\Delta\alpha_{\ell,\lambda}\Delta\varphi_{\ell,\lambda}|\big|\FF_\ell\right)=
\mathcal{O}(\hat\alpha_{\ell,\lambda}k^{-1})\label{eq:alpha3}
\end{align}
where $\hat\alpha_{\ell,\lambda}$ denotes the distance between $\alpha_{\ell,\lambda}$ and $2\pi.$
\end{proposition}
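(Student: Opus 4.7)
The strategy is to write $\Delta\alpha_{\ell,\lambda}=\Delta\vfi_{\ell,\lambda}-\Delta\vfi_{\ell,0}$, split each full step into two half-steps $\Delta=\Delta_{\half}+\Delta_{\half}$ at $\ell+\half$, and apply the single-step expansions of Proposition~\ref{prop:onestepfi} separately to $\vfi_{\ell,\lambda}$ and $\vfi_{\ell,0}$ (both run on the same $X_\ell, Y_\ell$). The cutoff (\ref{eq:cutoff}) is in force by the remark following the statement, so all quantities appearing below are bounded as advertised.

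For (\ref{eq:alpha1}), subtracting the two first-moment expansions produces three contributions. The $\lambda$-independent drifts $b_0^{(1)}+b_0^{(2)}$ cancel exactly; the $\lambda$-dependent remainders $(b_\lambda^{(i)}-b_0^{(i)})/n_0=\lambda/(4n_0\hat s)$ yield the secondary error $\O(k^{-1/2}n_0^{-1/2})$. The oscillatory pieces $osc^{(1)}, osc^{(3)}$ evaluated at $\vfi_{\ell,\lambda}$ versus $\vfi_{\ell,0}$ differ by pulling out $e^{-i\vfi_{\ell,\lambda}}-e^{-i\vfi_{\ell,0}}$ from the $e^{-ix}$ factors and $e^{-2i\vfi_{\ell,\lambda}}-e^{-2i\vfi_{\ell,0}}$ from the $e^{-2ix}$ factors; summing the two half-step contributions (the first carries a $\hat\rho_\ell^{-2}$, the second carries $\eta_\ell$ alone) reproduces the explicit formula displayed in~(\ref{eq:alpha1}). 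The remaining $\O(\hat\alpha_{\ell,\lambda}k^{-3/2})$ error absorbs the $\O(k^{-3/2})$ tail of Proposition~\ref{prop:onestepfi} (multiplied after subtraction by a factor controlled by $\hat\alpha_{\ell,\lambda}$) together with the Taylor error from evaluating the second half-step at $\vfi_{\ell+\half,\lambda}$ rather than $\vfi_{\ell,\lambda}$, where $\vfi_{\ell+\half,\lambda}-\vfi_{\ell,\lambda}=\O(k^{-1/2})$.

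For (\ref{eq:alpha2}) I expand $(\Delta\alpha)^2=(\Delta\vfi_\lambda)^2-2\Delta\vfi_\lambda\Delta\vfi_0+(\Delta\vfi_0)^2$ and invoke the covariance formulas of Proposition~\ref{prop:onestepfi} with $a^{(i)}(t,x,y)=\frac{1}{\beta\hat s^2}\Re e^{i(x-y)}$ plus a $\lambda$-independent non-oscillatory piece; the non-oscillatory constants cancel as $1-2+1=0$, leaving the main term $\frac{2}{\beta k}(1-\cos\alpha_{\ell,\lambda})=\O(\hat\alpha_{\ell,\lambda}/k)$ (using $1-\cos\alpha\le c\hat\alpha$ since $\hat\alpha\le\pi$), plus $\O(\hat\alpha_{\ell,\lambda}/k)$ from $osc^{(2)}, osc^{(4)}$ (which again factor as $e^{-i\vfi_\lambda}-e^{-i\vfi_0}$) and $\O(1/(n_0 k))$ from the drift-squared correction. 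For (\ref{eq:alpha3}) I would bypass the expansion approach and use the pathwise bound $|\Delta_{\half}\alpha_{\ell,\lambda}|\le c\hat\alpha_{\ell,\lambda}|Z_{\ell,\lambda}|+c|Z_{\ell,\lambda}-Z_{\ell,0}|$, obtained by applying Lemma~\ref{lem:ash} twice (first comparing the two $w$ arguments at fixed $Z_{\ell,\lambda}$, then comparing the two $Z$ parameters at the common $w_0$), combined with $|\Delta_{\half}\vfi_\lambda|\le c|Z_{\ell,\lambda}|$, the moment bound $\ev|Z_{\ell,\lambda}|^2=\O(1/k)$, and Cauchy--Schwarz.

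The main obstacle I anticipate is bookkeeping in the second half-step: its conditional expectation is naturally expressed in terms of $\vfi_{\ell+\half,\lambda}$ via $\FF_{\ell+\half}$, whereas the statement is conditioned on $\FF_\ell$ and written in terms of $\vfi_{\ell,\lambda}$. Propagating the phase shift $\vfi_{\ell+\half}-\vfi_\ell$ through the nonlinear oscillatory expressions $osc^{(3)}, osc^{(4)}$ without destroying the cancellations that produce the clean factor $(e^{-i\vfi_\lambda}-e^{-i\vfi_0})$ is the delicate step, and keeping the error at order $\hat\alpha_{\ell,\lambda}k^{-3/2}$ rather than the crude $k^{-1}$ requires combining the Taylor expansion carefully with the conditional variance bounds of Proposition~\ref{prop:onestepfi}.
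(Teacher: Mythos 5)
The overall skeleton---split each step at $\ell+\half$, use single-step expansions, and condition on $\FF_{\ell+\half}$ for the second half---is the right one, and your idea for~(\ref{eq:alpha3}), namely applying Lemma~\ref{lem:ash} to compare the two $w$ arguments under a common operator so that $|w-v|\le\hat\alpha_{\ell,\lambda}$ is picked up, is exactly the mechanism that works. The gap is in~(\ref{eq:alpha1}) and~(\ref{eq:alpha2}): you subtract the two expansions of Proposition~\ref{prop:onestepfi} for $\vfi_\lambda$ and $\vfi_0$ and assert in a parenthesis that the $\O(k^{-3/2})$ tails become, after subtraction, $\O(\hat\alpha_{\ell,\lambda}k^{-3/2})$. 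That does not follow from Proposition~\ref{prop:onestepfi} as stated, which records no $x$-dependence in its error; the difference of the two tails is only $\O(k^{-3/2})$, and since $\hat\alpha_{\ell,\lambda}$ can vanish while $k$ can be as small as $\mathcal{K}(n_1^{1/3}\vee1)\ll n_0^{1/2}$, a bare $\O(k^{-3/2})$ is not dominated by $\O(\hat\alpha_{\ell,\lambda}k^{-3/2}+k^{-1/2}n_0^{-1/2})$.

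The factor $\hat\alpha$ cannot be produced by subtracting two black-box expansions for the operators $\mbf{S}_{\ell,\lambda}$ and $\mbf{S}_{\ell,0}$, because these are different operators. It has to come from the $|w-v|$ factor in~(\ref{eq:asherr}) applied to a \emph{single} operator evaluated at $w=e^{i\vfi_{\ell,\lambda}}\bar\eta_\ell\hat\rho_\ell^{-2}$, $v=e^{i\vfi_{\ell,0}}\bar\eta_\ell\hat\rho_\ell^{-2}$. The paper achieves this by refining the half-steps into quarter steps~(\ref{eq:fievo}): the $\lambda$-dependence is isolated in the deterministic maps $\mbf{L}_\ell^{\hat{\mbf{T}}_\ell\hat{\mbf{Q}}_\ell}$, $\hat{\mbf{L}}_\ell^{\mbf{T}_\ell\mbf{Q}_\ell}$ (which reduce to the identity at $\lambda=0$), leaving the same random operator $\mbf{S}_{\ell,0}^{\hat{\mbf{Q}}_\ell}$ (resp.\ $\hat{\mbf{S}}_{\ell,0}^{\mbf{Q}_\ell}$) for both $\lambda$ and $\lambda=0$. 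This yields the three-term decomposition~(\ref{eq:alphaevo}): the first two terms account for the deterministic $\mbf{L}$-shift of size $\O(n_0^{-1/2}k^{-1/2})$, producing the $k^{-1/2}n_0^{-1/2}$ part of the error, while the third term $\ash\bigl(\mbf{S}_{\ell,0}^{\hat{\mbf{Q}}_\ell},e^{i\vfi_{\ell,\lambda}}\bar\eta_\ell\hat\rho_\ell^{-2},e^{i\vfi_{\ell,0}}\bar\eta_\ell\hat\rho_\ell^{-2}\bigr)$ is a single angular shift to which Lemma~\ref{lem:ash} applies with $|w-v|\le\hat\alpha_{\ell,\lambda}$, pushing $\hat\alpha_{\ell,\lambda}$ through the linear, quadratic and cubic terms at once. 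This is the device needed for the ``delicate step'' you flag at the end; without going below the level of Proposition~\ref{prop:onestepfi}'s statement to Lemma~\ref{lem:ash} and the quarter-step split, the required error bound cannot be reached.
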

\begin{proof} We first prove estimates on $\Delta_{\half} \alpha_{\ell, \lambda}$ and $\Delta_{\half} \alpha_{\ell+\half, \lambda}$. In order to do this, we break up the evolution of $\vfi_{\ell, \lambda}$  into even smaller pieces:
\begin{equation}
\vfi_{\ell,\lambda}\stackrel{{\mbf{L}}_{\ell}^{\hat {\mbf{T}}_{\ell}\hat {\mbf{Q}}_{\ell}}}{\longrightarrow}\vfi_{\ell+\qt,\lambda}\stackrel{{\mbf{S}}_{\ell,0}^{\hat {\mbf{Q}}_{\ell}}}{\longrightarrow}\vfi_{\ell+\half,\lambda}\stackrel{\hat{\mbf{L}}_{\ell}^{ {\mbf{T}}_{\ell} {\mbf{Q}}_{\ell}}}{\longrightarrow}\vfi_{\ell+\tqt,\lambda}\stackrel{\hat {\mbf{S}}_{\ell,0}^{ {\mbf{Q}}_{\ell}}}{\longrightarrow}\vfi_{\ell+1,\lambda} \label{eq:fievo}
\end{equation}
where $\vfi_{\ell+\qt,\lambda}$ and $\vfi_{\ell+\tqt,\lambda}$ are defined accordingly. We also define the relative phase functions for the intermediate steps in the natural way.

By choosing $c(\beta,\bar \lambda)$ large enough we can assume $\frac{\bar \lambda}{4 \sqrt{n_0 k}}\le \frac1{10}$ for $\ell \le n_2\le n- \mathcal{K}$. Using this with the cutoff (\ref{eq:cutoff}) the random variables $Z_{\ell, \lambda}, \hat Z_{\ell, \lambda}$ defined in (\ref{eq:ZZ}) are both less than $1/3$ in absolute value. This means that we are allowed to use Lemma \ref{lem:ash} in the general case for each operator appearing in (\ref{eq:fievo}) (i.e. the condition $|z|\le 1/3$ is always satisfied). From this point the proof is similar to the proof of 
 Proposition 29 in \cite{carousel}. We first write
\begin{eqnarray}\nonumber
\Delta_{\half} \alpha_{\ell,\lambda}&=&\ash({\mbf{L}}_{\ell}^{\hat {\mbf{T}}_{\ell}},-1,e^{i \vfi_{\ell,\lambda}} \bar \eta_\ell \hat \rho_{\ell}^{-2})+\ash({\mbf{S}}_{\ell,0}^{\hat {\mbf{Q}}_{\ell}}, e^{i \vfi_{\ell+\qt,\lambda}} \bar \eta_\ell \hat \rho_{\ell}^{-2},e^{i \vfi_{\ell,\lambda}} \bar \eta_\ell \hat \rho_{\ell}^{-2})\\&&\qquad+\ash({\mbf{S}}_{\ell,0}^{\hat {\mbf{Q}}_{\ell}}, e^{i \vfi_{\ell,\lambda}} \bar \eta_\ell \hat \rho_{\ell}^{-2},e^{i \vfi_{\ell,0}} \bar \eta_\ell \hat \rho_{\ell}^{-2})\label{eq:alphaevo}.
\end{eqnarray}
Using Lemma \ref{lem:ash} one can show that the first two terms in (\ref{eq:alphaevo}) are of $\O(n_0^{-1/2} k^{-1/2})$. Using Lemma \ref{lem:ash} again for the third term together with 
\[
|e^{i \varphi_{\ell,\gl}}-e^{i \varphi_{\ell,0}}|=|e^{i \alpha_{\ell,\gl}}-1|\le \hat \alpha_{\ell,\gl}, \qquad |e^{i 2\varphi_{\ell,\gl}}-e^{i 2\varphi_{\ell,0}}|\le 2\hat \alpha_{\ell,\gl}
\]
we get the analogue of (\ref{eq:alpha1}) for $\Delta_{\half} \alpha_{\ell,\lambda}$:
\begin{align}\nonumber
&\mathbb{E}\left(\Delta_{\half}\alpha_{\ell,\lambda}|\FF_\ell\right)=
-\frac{1}{n_0}\Re\left\{\eta_\ell\left(e^{-i\vfi_{\ell,\lambda}}-e^{-i\vfi_{\ell,0}}\right)\right[\hat\rho_\ell^{-2}\left(v_{\lambda}+iq^{(1)}/2\right)\left]\right\}\\ \nonumber
&\quad\quad\quad-\frac{1}{n_0}\Re\left\{i\eta_\ell^2/4\left(e^{-2i\vfi_{\ell,\lambda}}-e^{-2i\vfi_{\ell,0}}\right)\right[\hat\rho_\ell^{-4}q^{(1)}\left]\right\} +\mathcal{O}(\hat\alpha_{\ell,\lambda}k^{-3/2}+k^{-1/2}n_0^{-1/2})\\
&\hskip 48mm =\mathcal{O}(\hat\alpha_{\ell,\lambda}k^{-1}+k^{-1/2}n_0^{-1/2}) \label{eq:alpha11}
\end{align}
We can prove the analogues of (\ref{eq:alpha2}) and (\ref{eq:alpha3}) and similar
 bounds for $\Delta_{\half} \alpha_{\ell+\half, \lambda}$ the same way. We can also prove
\begin{equation}\label{eq:extra}
\ev\left(\Delta_{\half} e^{i\varphi_{\ell,\lambda}}-\Delta_{\half}  e^{i\varphi_{\ell,0}}|\mathcal{F}_{\ell}\right)\leq ck^{-1}\hat \alpha_\ell+cn_0^{-1/2}k^{-1/2}
\end{equation}
this is the analogue of Lemma 32 from \cite{carousel} and it can be proved exactly the same way.

To get (\ref{eq:alpha1}) we write
\begin{eqnarray}\nonumber
&&\ev\left[\Delta \alpha_{\ell,\lambda}\big| \vfi_{\ell, 0}=x, \vfi_{\ell,\lambda}=y \right]\\
\nonumber &&\hskip20mm =\ev\left[\Delta_{\half} \alpha_{\ell,\lambda}\big|  \vfi_{\ell, 0}=x, \vfi_{\ell,\lambda}=y\right]+\ev\left[E[\Delta_{\half} \alpha_{\ell+\half,\lambda}\big| \FF_{\ell+\half} ]\big| \vfi_{\ell, 0}=x, \vfi_{\ell,\lambda}=y \right],\\
\nonumber  &&\hskip20mm=\ev\left[\Delta_{\half} \alpha_{\ell,\lambda}\big|  \vfi_{\ell, 0}=x, \vfi_{\ell,\lambda}=y\right]+\ev\left[\Delta_{\half} \alpha_{\ell+\half,\lambda}\big| \vfi_{\ell+\half, 0}=x, \vfi_{\ell+\half,\lambda}=y \right]\\&&\hskip30mm+\mathcal{O}(\hat\alpha_{\ell,\lambda}k^{-2}+k^{-3/2}n_0^{-1/2})\nonumber
\end{eqnarray}
where the last line follows from (\ref{eq:extra}) and the just proved half step estimates. Now applying (\ref{eq:alpha11}) and the corresponding estimate for $\Delta_{\half} \alpha_{\ell+\half,\lambda}$  we get (\ref{eq:alpha1}). The other to estimates follow similarly.
\end{proof}

The next lemma provides a Gronwall-type estimate for the relative phase function. This will be the main ingredient in the proof of Proposition \ref{prop:middle}. The proof is based on the single step estimates of Proposition \ref{prop:alpha1step} and the oscillation estimates of Lemma \ref{l_sum}, the latter will be proved in the Appendix.
\begin{lemma}\label{lem:alphasum}
There exist constants $c_0,c_1, c_2$ depending on $\bar \lambda, \beta$ and a finite set $J$
 depending on $n, n_1, m_1$
so that with $y=n_0^{-1/2}(n_1^{1/6}\vee \log n_0)$
 we have
\[\left|\mathbb{E}\left(\alpha_{\ell_2,\lambda}-\alpha_{\ell_1,\lambda}|\mathcal{F}_{\ell_1}\right)\right|\leq c_1(y+\sqrt{\epsilon})+\mathbb{E}(\hat\alpha_{\ell_2-1}|\mathcal{F}_{\ell_1})/2+\sum_{\ell=\ell_1}^{\ell_2-2}b_\ell\mathbb{E}(\hat\alpha_\ell|\mathcal{F}_\ell)\]
\[0\le b_\ell\leq  c_0(n_1^{1/2} k^{-5/2}+k^{-3/2}+\max(n_1^{1/3}, 1)k^{-2}+1_{(\ell\in J)})\]
if $\mathcal{K}>c_2$, $|\lambda|\le \bar \lambda$ and $n_0(1-\eps)\le \ell_1\le \ell_2\le n_2$.
\end{lemma}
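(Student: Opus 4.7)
The plan is to sum the one‑step conditional drift estimate \eqref{eq:alpha1} over $\ell\in[\ell_1,\ell_2)$ and to extract the desired structure via Abel summation against the unimodular factors $\eta_\ell,\,\eta_\ell^2$. Writing
\[
\ev[\alpha_{\ell_2,\lambda}-\alpha_{\ell_1,\lambda}\mid \FF_{\ell_1}]=\sum_{\ell=\ell_1}^{\ell_2-1}\ev\!\left[\ev[\Delta\alpha_{\ell,\lambda}\mid \FF_\ell]\,\big|\,\FF_{\ell_1}\right]
\]
and substituting \eqref{eq:alpha1}, the deterministic error $\O(\hat\alpha_{\ell,\lambda}k^{-3/2})$ contributes a $b_\ell$-coefficient of the claimed type $Ck^{-3/2}$, while $n_0^{-1/2}\sum_{\ell_1\le \ell<\ell_2} k^{-1/2}\le 2\sqrt{\epsilon}$ supplies the $c_1\sqrt{\epsilon}$ piece of the target (since $k=n_0-\ell$ ranges through $[n_0-\ell_2,n_0\epsilon]$). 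What remains are the two oscillatory sums
\[
S_d:=\frac{1}{n_0}\sum_{\ell=\ell_1}^{\ell_2-1}\eta_\ell^{d}\,h_\ell^{(d)},\qquad d=1,2,
\]
where $h_\ell^{(d)}$ is the $\FF_{\ell_1}$-conditional expectation of the $\eta_\ell^{d}$-coefficient in \eqref{eq:alpha1}, a quantity of size $\O(n_0 k^{-1}\ev[\hat\alpha_{\ell,\lambda}\mid \FF_{\ell_1}])$.

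Next I would apply summation-by-parts to each $S_d$ with partial sums $F^{(d)}_\ell=\sum_{j=\ell_1}^{\ell}\eta_j^{d}$, so that
\[
\sum_{\ell=\ell_1}^{\ell_2-1}\eta_\ell^{d}\,h_\ell^{(d)}=F^{(d)}_{\ell_2-1}h^{(d)}_{\ell_2-1}+\sum_{\ell=\ell_1}^{\ell_2-2}F^{(d)}_\ell\bigl(h_\ell^{(d)}-h_{\ell+1}^{(d)}\bigr).
\]
Lemma \ref{l_sum} of the Appendix (the same tool used in Lemma \ref{lem:bsum}) gives $|F^{(1)}_\ell|\le c(1+n_1^{1/2}k^{-1/2})$; at $\ell=\ell_2-1$ where $k\ge \mathcal{K}(n_1^{1/3}\vee 1)$, this forces $|F^{(1)}_{\ell_2-1}h^{(1)}_{\ell_2-1}|/n_0\le C\mathcal{K}^{-1}\ev[\hat\alpha_{\ell_2-1}\mid \FF_{\ell_1}]$ (both $k^{-1}$ and $n_1^{1/2}k^{-3/2}$ are $\O(\mathcal{K}^{-1})$ in that range), and a $c_2$ chosen large enough makes this at most $\tfrac14\ev[\hat\alpha_{\ell_2-1}\mid\FF_{\ell_1}]$; the analogous $d=2$ bound contributes the remaining $\tfrac14$, producing the $\tfrac12\ev[\hat\alpha_{\ell_2-1}\mid \FF_{\ell_1}]$ term in the target. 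For the differences I split $h^{(d)}_\ell-h^{(d)}_{\ell+1}$ into a smooth deterministic-coefficient part (discrete derivative of order $n_0 k^{-2}$, contributing $Cn_0 k^{-2}\hat\alpha_\ell$) and a state-dependent part whose $\FF_\ell$-conditional expectation is controlled by \eqref{eq:extra} as $c(k^{-1}\hat\alpha_\ell+n_0^{-1/2}k^{-1/2})$. Multiplying by $|F^{(d)}_\ell|/n_0$ and summing produces the $b_\ell$-coefficients $Ck^{-2}(1+n_1^{1/2}k^{-1/2})=Ck^{-2}+Cn_1^{1/2}k^{-5/2}$, while the residual deterministic remainder $\sum_\ell n_0^{-1/2}k^{-3/2}(1+n_1^{1/2}k^{-1/2})$ is shown to be $\O(y)$ by using $k\ge\mathcal{K}(n_1^{1/3}\vee 1)$: the first piece sums to $\O(\mathcal{K}^{-1/2}n_0^{-1/2})\le y$, and the second to $\O(n_0^{-1/2}n_1^{1/2}/(\mathcal{K}(n_1^{1/3}\vee 1)))\le\mathcal{K}^{-1} y$.

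The main obstacle is the $d=2$ case: the argument of $\eta_\ell^2$ can develop a stationary point inside $[\ell_1,n_2]$ (precisely the situation $\mu_n<\sqrt{m-n}$ isolated in the proof of Lemma \ref{lem:bsum}), where Lemma \ref{l_sum} degenerates. I would mirror the three-piece decomposition from that proof: on the two ranges where $|\eta_\ell^2+1|\ge n_0^{-1/2}$, Lemma \ref{l_sum} yields a partial-sum bound of order $\max(n_1^{1/3},1)+n_1^{1/2}k^{-1/2}$, which feeds the Abel argument to produce the $\max(n_1^{1/3},1)k^{-2}$ coefficient in $b_\ell$; on the short intermediate range $J$ where $|\eta_\ell^2+1|<n_0^{-1/2}$ (a finite index set determined by $n,n_1,m_1$), oscillation is useless and I would bound the term by $|h^{(2)}_\ell|/n_0\le Ck^{-1}\hat\alpha_\ell$ directly, generating the $1_{\ell\in J}$ correction in $b_\ell$. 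Verifying the precise partial-sum estimates of Lemma \ref{l_sum} in both regimes, and bookkeeping the stationary-phase splitting so that the indicator set $J$ depends only on $n,n_1,m_1$ (and not on $\lambda$ or on the random phase), is the technical heart of the argument; once assembled, these pieces match the announced bound on $b_\ell$ and close the estimate.
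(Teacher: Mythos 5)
Your overall plan—sum the one-step drift from Proposition~\ref{prop:alpha1step}, absorb the non-oscillatory error $\O(\hat\alpha_\ell k^{-3/2})$ into $b_\ell$ and the $n_0^{-1/2}k^{-1/2}$ tail into $c_1\sqrt\eps$, then attack the oscillatory sums via Abel summation against $F^{(d)}$ using Lemma~\ref{l_sum}—is the paper's strategy, and your treatment of the $d=1$ sum is essentially correct. But the $d=2$ endgame, which you yourself flag as ``the technical heart,'' has a genuine gap, and it is not a matter of bookkeeping.

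First, you are partitioning based on $|\eta_\ell^2+1|<n_0^{-1/2}$. The quantity that controls degeneracy of the oscillation is the \emph{step phase} $|\rho_\ell^2\hat\rho_\ell^2+1|$, not the cumulative phase $\eta_\ell^2$, and in this lemma (unlike Lemma~\ref{lem:bsum}, where $k\ge\eps n_0$ makes $k$ and $n_0$ comparable) the threshold must scale with $k$, not with $n_0$: the paper uses $|\rho_\ell^2\hat\rho_\ell^2+1|\gtrless k^{-1/2}$. Second, and more seriously, the stretch where $|\rho_\ell^2\hat\rho_\ell^2+1|\le k^{-1/2}$ is \emph{not} a set of $\O(1)$ indices; it is an interval of $\ell$'s whose length grows with $n$, so identifying it with the set $J$ in the statement (which must have $\#J=\O(1)$ for $\sum_\ell b_\ell$ to be bounded, as required by the Gronwall step in the proof of Proposition~\ref{prop:middle}) is untenable. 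Third, on that degenerate stretch your bound $|h^{(2)}_\ell|/n_0\le C k^{-1}\hat\alpha_\ell$ discards exactly the smallness that saves the estimate: the coefficient $e_{2,\ell}$ carries a factor of $|\rho_\ell^2\hat\rho_\ell^2+1|$ (since $\hat\rho^{-4}q^{(1)}+q^{(2)}\propto k^{-1}(\rho^2\hat\rho^2+1)$), so on this range one has $|e_{2,\ell}|\le c\,k^{-3/2}\hat\alpha_\ell$, which is summable; a bare $k^{-1}$ gives a $\log$-divergent $\sum_\ell b_\ell$. The actual source of the $1_{(\ell\in J)}$ term is different from what you describe: the interval $[\ell_1,\ell_2]$ is cut at the (at most four) solutions of $|\rho_\ell^2\hat\rho_\ell^2+1|=k^{-1/2}$ and at $k=\sqrt{m_1n_1}$ to make the phase monotone on each piece, Lemma~\ref{l_sum} is applied piecewise, and $J$ is the $\O(1)$ set of cut points where the Abel-summation \emph{boundary} terms of the individual pieces land. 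Without this structure the announced bound on $b_\ell$ does not close.
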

\begin{proof} Recall that $k=n_0-\ell$, $k_i=n_0-\ell_i$.
 We denote $x_\ell=\mathbb{E}[\hat\alpha_\ell|\mathcal{F}_{\ell_1}]$ and set
\begin{eqnarray*}e_{1,\ell}&=&\frac{1}{n_0}\left(e^{-i \vfi_{\ell,\lambda}}-e^{-i\vfi_{\ell,0}}\right)\left[(- v_{\lambda}- i q^{(1)}/2)\hat \rho_\ell^{-2}+(- \hat v_{0}- i q^{(2)}/2)\right],\\
e_{2,\ell}&=&-\frac{i}{n_0}( e^{- 2i\vfi_{\ell,\lambda}}- e^{- 2i\vfi_{\ell,0}}) [\hat \rho_\ell^{-4} q^{(1)}+q^{(2)}]/4.\end{eqnarray*}
From Proposition \ref{prop:alpha1step} we can write
\[\left|\mathbb{E}[\alpha_2-\alpha_1|\FF_{\ell_1}]\right|\leq \left|\sum_{\ell=\ell_1}^{\ell_2-1}\Re(\eta_\ell e_{1,\ell})\right|+\left|\sum_{\ell=\ell_1}^{\ell_2-1}\Re(\eta_\ell^2 e_{2,\ell})\right|+
c\sum_{\ell=\ell_1}^{\ell_2-1} x_{\ell}k^{-3/2}+c \sum_{\ell=\ell_1}^{\ell_2-1} k^{-1/2}n_0^{-1/2}\]
whose terms we denote $\zeta_1,\zeta_2, \zeta_3$ and $\zeta_4$ respectively.
Clearly, $\zeta_3$ is of the right form and
\[
|\zeta_4|\le \sum_{k=1}^{n_0 \eps} k^{-1/2} n_0^{-1/2}\le c \sqrt{\eps},
\]
so we only need to bound the first two terms.

We will use
\[\mathbb{E}\left(\Delta e^{i\varphi_{\ell,\lambda}}-\Delta e^{i\varphi_{\ell,0}}|\mathcal{F}_{\ell}\right)\leq ck^{-1}x_\ell+cn_0^{-1/2}k^{-1/2}\]
which is the `one-step' version of (\ref{eq:extra}) and can be proved the same way as Lemma 32 in \cite{carousel}. From this we get the estimates
\[|e_{i,\ell}|\leq cx_\ell/k,\qquad |\Delta e_{i,\ell}|\leq ck^{-2}x_\ell+cn_0^{-1/2}k^{-3/2}.\]
Then by Lemma \ref{l_sum} we have
\[|\zeta_1|\leq C x_{\ell_2-1} k_2^{-1} |F^{(1)}_{\ell_1,\ell_2-1}|+C
\sum_{\ell=\ell_1}^{\ell_2-2} |F_{\ell_1,\ell}^{(1)}|(x_\ell k^{-2}+n_0^{-1/2} k^{-3/2})\]
with $F_{\ell_1,\ell}\le C(n_1^{1/2} k^{-1/2}+1)$. Collecting the estimates and using $k_2\ge \mathcal{K} (n_1^{1/3}{\vee} 1)$ we get
\[
|\zeta_1|\leq c \mathcal{K}^{-1} x_{\ell_2-1}+c n_0^{-1/2} {\max(n_1^{1/6},1)}+
\sum_{\ell=\ell_1}^{\ell_2-2} x_\ell (n_1^{1/2} k^{-5/2}+k^{-2}).
\]
In order to bound $\zeta_2$ we use a similar strategy to the one applied in the proof of Lemma \ref{lem:bsum}. We divide the index set $[\ell_1, \ell_2]$ into finitely many intervals $I_1, I_2, \dots, I_a$  so that for each  $1\le j \le a$  one of the following three statements holds:
\begin{eqnarray}\label{divA}
&&  \textup{for each } \ell\in I_j \textup{ we have } k\ge \sqrt{n_1 m_1} \textup{ and } |\hat \rho_\ell^{2}\rho_\ell^2+1|\ge k^{-1/2},\\[4pt]\label{divB}
&&  \textup{for each } \ell\in I_j \textup{ we have } k\le \sqrt{n_1 m_1} \textup{ and } |\hat \rho_\ell^{2}\rho_\ell^2+1|\ge k^{-1/2},\\[4pt]\label{divC}
&&  \textup{for each } \ell\in I_j \textup{ we have } |\hat \rho_\ell^{2}\rho_\ell^2+1|\le k^{-1/2}.
\end{eqnarray}
It is clear that if we divide $[\ell_1, \ell_2]$ into parts at the $\sqrt{n_1 m_1}$ and the solutions of  $|\hat \rho_\ell^{2}\rho_\ell^2+1|=k^{-1/2}$ then the resulting partition will satisfy the previous conditions.
Since $|\hat \rho_\ell^{2}\rho_\ell^2+1|=k^{-1/2}$ has at most three roots (it is a cubic equation, see the proof of Lemma \ref{l_sum} for details) we can always get a suitable partition with at most five intervals. Moreover the endpoints of these intervals (apart from $\ell_1$ and $\ell_2$) will be the elements of a  set of size at most four with elements only depending on $n, m_1, n_1$.

We will estimate the sums corresponding to the various intervals $I_j$ separately. If $I_j$ satisfies condition (\ref{divC}) then we use
\[
|e_{2,\ell}|=\frac{1}{2\beta}|e^{- 2ix}- e^{- 2iy}| \frac{1}{k}|\hat \rho_\ell^{2}\rho_\ell^2+1|\le c k^{-3/2} x_\ell
\]
to get
\[
|\sum_{\ell\in I_j} \Re(\eta_\ell^2 e_{2,\ell})|\le c \sum_{\ell\in I_j} k^{-3/2} x_\ell
\]
If $I_j=[\ell_1^*, \ell_2^*]$ satisfies condition (\ref{divA}) then we use Lemma \ref{l_sum}
to get
\[|\sum_{\ell\in I_j} \Re(\eta_\ell^2 e_{2,\ell})|\leq c x_{\ell_1^*} (k_1^*)^{-1} |F^{(2)}_{\ell_1^*,\ell_2^*}|+c
\sum_{\ell={\ell_1^*}}^{{\ell_2^*}-1} |F^{(2)}_{{\ell+1},\ell_2^*}|(x_\ell k^{-2}+n_0^{-1/2} k^{-3/2})\]
where we have
\begin{equation}\label{mdl1}
|F^{(2)}_{{\ell+1}, \ell_2^*}|\le {|F^{(2)}_{\ell, \ell_2^*}|+1}\le c( k^{1/2}+n_1^{1/2} (k_2^*)^{-1/2}+1).
\end{equation}
We can bound the first term as
\[
x_{\ell_1^*} (k_1^*)^{-1} ( (k_1^*)^{1/2}+n_1^{1/2} (k_2^*)^{-1/2}+1)\le c \mathcal{K}^{-1/2} x_{\ell_1^*} \]
using $k\ge \max(n_1^{1/3},1)$.
For the general term in the sum we get
\begin{align}\nonumber
&c ( k^{1/2}+n_1^{1/2} (k_2^*)^{-1/2}+1)(x_\ell k^{-2}+n_0^{-1/2} k^{-3/2})\\&\hskip20mm\le c x_\ell(k^{-3/2}+\max(n_1^{1/3}, 1)k^{-2})+c( n_0^{-1/2} k^{-1}+\max(n_1^{1/3}, 1) n_0^{-1/2} k^{-3/2} )\label{err1}
\end{align}
Note that the sum of the error terms (\ref{err1}) is
\[
c \sum_{\ell={\ell_1^*}}^{{\ell_2^*}-1}  ( n_0^{-1/2} k^{-1}+\max(n_1^{1/3}, 1) n_0^{-1/2} k^{-3/2} )\le c( n_0^{-1/2} \log n_0+\max (n_1^{1/6},1) n_0^{-1/2})
\]
where we used $\max(n_1^{1/3}, 1)\le k \le n_0$. Putting our estimates together:
\begin{align}\label{middlebnd}
|\sum_{\ell\in I_j} \Re(\eta_\ell^2 e_{2,\ell})|\le& c \mathcal{K}^{-1/2} x_{\ell_1^*} +c \sum_{\ell=\ell_1}^{\ell_1^*-1} x_\ell(k^{-3/2}+\max(n_1^{1/3}, 1)k^{-2})\\&\hskip20mm+c( n_0^{-1/2} \log n_0+\max (n_1^{1/6},1) n_0^{-1/2}).\nonumber
\end{align}
The only case left is when $I_j=[\ell_1^*, \ell_2^*]$ satisfies condition  (\ref{divB}). If $\mu_n\ge \sqrt{m-n}$ then  we have the same estimate for $F^{(2)}_{\ell, \ell_2^*}$ as in (\ref{mdl1}) so we
get exactly the same bound as in (\ref{middlebnd}). If we have $\mu_n< \sqrt{m-n}$ then we use (\ref{partial1}) of Lemma (\ref{l_sum}) with the bound
\[
|F^{(2)}_{\ell_1^*, \ell}|\le c( k^{1/2}+n_1^{1/2} (k_1^*)^{-1/2}+1).
\]
Copying the previous arguments we get
\begin{eqnarray*}
|\sum_{\ell\in I_j} \Re(\eta_\ell^2 e_{2,\ell})|&\le& c \mathcal{K}^{-1/2} x_{\ell_2^*-1} +c \sum_{\ell={\ell_1^*}}^{{\ell_2^*}-1} x_\ell(k^{-3/2}+\max(n_1^{1/3}, 1)k^{-2})\\&&\hskip20mm+c( n_0^{-1/2} \log n_0+\max (n_1^{1/6},1) n_0^{-1/2}).
\end{eqnarray*}
Collecting our estimates, noting that $\ell_2^*-1$ is the endpoint of one of the intervals $I_j$
 and letting $\mathcal{K}$ be large enough we get the statement of the lemma.
\end{proof}

The proof of Proposition \ref{prop:middle} relies on the single step estimates of Proposition \ref{prop:alpha1step} and the following Gronwall-type lemma which was proved in \cite{carousel}.
\begin{lemma}\label{lem:turbogw}
Suppose that for positive numbers $x_\ell, b_\ell, c$, integers
$\ell_1<\ell\le \ell_2$  we
have
\begin{equation}\label{e_gr1}
x_{\ell}\le \frac{x_{\ell-1}}{2}+c+\sum_{j=\ell_1}^{\ell-1} b_j
x_j.
\end{equation}
Then $
x_{\ell_2}\le 2\,(x_{\ell_1}+c) \exp\left(3
\sum_{j=\ell_1}^{\ell_2-1} b_j\right)$.
\end{lemma}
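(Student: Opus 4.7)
\textbf{Proof proposal for Lemma \ref{lem:turbogw}.} The shape of the hypothesis differs from standard discrete Gronwall only by the extra geometric damping term $x_{\ell-1}/2$. My plan is to first eliminate this damping by an elementary iteration, reducing the situation to a pure Gronwall inequality, and then apply the classical discrete Gronwall argument.

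Set $R_\ell = c + \sum_{j=\ell_1}^{\ell} b_j x_j$, so $R_\ell$ is nondecreasing in $\ell$ (because $b_j, x_j \ge 0$), and the hypothesis reads $x_\ell \le \tfrac{1}{2} x_{\ell-1} + R_{\ell-1}$. Iterating this inequality from $\ell$ down to $\ell_1$ gives
\begin{equation*}
x_\ell \le 2^{-(\ell-\ell_1)} x_{\ell_1} + \sum_{k=0}^{\ell-\ell_1-1} 2^{-k} R_{\ell-1-k}.
\end{equation*}
Using monotonicity $R_{\ell-1-k} \le R_{\ell-1}$ and $\sum_{k\ge 0} 2^{-k} = 2$, this simplifies to
\begin{equation*}
x_\ell \le x_{\ell_1} + 2 R_{\ell-1} = (x_{\ell_1} + 2c) + \sum_{j=\ell_1}^{\ell-1} (2b_j) x_j.
\end{equation*}

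This is the standard form for discrete Gronwall with constant term $A := x_{\ell_1} + 2c$ and coefficients $a_j := 2b_j$. An easy induction on $\ell$, or the standard telescoping identity $a_j \prod_{k<j}(1+a_k) = \prod_{k\le j}(1+a_k) - \prod_{k<j}(1+a_k)$, yields
\begin{equation*}
x_\ell \le A \prod_{j=\ell_1}^{\ell-1}(1+2b_j) \le A \exp\Bigl(2 \sum_{j=\ell_1}^{\ell-1} b_j\Bigr).
\end{equation*}
Taking $\ell = \ell_2$ and bounding $x_{\ell_1}+2c \le 2(x_{\ell_1}+c)$ and $\exp(2\sum b_j) \le \exp(3\sum b_j)$ yields the claim. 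There is no real obstacle here; the only thing to notice is that one must perform the geometric-iteration step \emph{before} invoking Gronwall, since a direct induction with the target bound $2(x_{\ell_1}+c)\exp(3\sum_{j<\ell} b_j)$ loses a factor of order $4/3$ at each step (the $1/2 + 1/3 + 1/2 > 1$ computation) and does not close.
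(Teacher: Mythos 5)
Your argument is correct. The two-step structure is sound: the iteration $x_\ell\le\frac12 x_{\ell-1}+R_{\ell-1}$ unfolds to $x_\ell\le 2^{-(\ell-\ell_1)}x_{\ell_1}+\sum_{k=0}^{\ell-\ell_1-1}2^{-k}R_{\ell-1-k}$, the monotonicity of $R$ and the geometric bound $\sum 2^{-k}\le 2$ reduce the hypothesis to the pure Gronwall form $x_\ell\le (x_{\ell_1}+2c)+\sum_{j=\ell_1}^{\ell-1}2b_j\,x_j$ (also valid trivially at $\ell=\ell_1$, which gives the base case), and the telescoping product then yields $x_{\ell_2}\le (x_{\ell_1}+2c)\exp\bigl(2\sum_{j=\ell_1}^{\ell_2-1}b_j\bigr)$, which is even slightly stronger than the claimed bound. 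Your closing remark is also right: a one-shot induction on the stated target $2(x_{\ell_1}+c)\exp(3\sum_{j<\ell}b_j)$ does not close, because the term $c+B_{\ell-1}\sum_{j<\ell}b_j$ cannot be dominated by $B_{\ell-1}(e^{3b_{\ell-1}}-\tfrac12)$ when $\sum_{j<\ell}b_j$ is large and $b_{\ell-1}$ is small, so the damping really must be absorbed first. Note that the present paper does not contain a proof of this lemma---it is quoted from \cite{carousel}---so there is nothing internal to compare your argument against, but it is a clean and self-contained derivation of the stated estimate.
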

Now we are ready to prove Proposition \ref{prop:middle}.
\begin{proof}[Proof of Proposition \ref{prop:middle}] We will adapt the proof of Proposition 28 from \cite{carousel}. Let $a=\alpha_{\ell_1,\lambda}$ and define $a_{\lozenge},a^{\lozenge}\in 2\pi\mathbb{Z}$ so that $[a_{\lozenge},a^{\lozenge})$ is an interval of length $2\pi$ containing $a$. We can assume that $\lambda\ge 0$, the other case being very similar. We will drop the index $\lambda$ from $\alpha$ and we will write $\mathbb{E}(.)=\mathbb{E}(.|\mathcal{F}_{\ell_1}).$ 

We will show that there exists $c_0$ so that if
$\mathcal{K}>c_0$, then if $\tilde a=a_\diamondsuit$ or $a^\diamondsuit$ then
\begin{eqnarray}\label{eq:tightness}
\ev |\alpha_{\ell_2}-\tilde a |& \le& c_1 (
|a-\tilde a| + \sqrt{\eps}+y).
\end{eqnarray}
The claim of the proposition follows from this by an application
of the triangle inequality, the additional
condition $\kappa>c_0$ is treated via the error term $1/\kappa$.

In order to prove (\ref{eq:tightness}) for $\tilde a=a_\diamondsuit$ we follow the steps described in Proposition 28 from \cite{carousel}. Using the exact same argument we
only need to prove that for the coefficients $b_\ell$ in Lemma \ref{lem:alphasum} are bounded 
by a constant depending only on $\bar \lambda, \beta$ and that $\alpha$ never goes below an integer multiple of $2\pi$ that it passes. 

The first statement is easy to check, we have,
\begin{align*}
\sum_{\ell=\ell_1}^{\ell_2-1}b_{\ell}\leq c_0\left(n_1^{1/2}\min(n_1^{-1/2},1)+\min(n_1^{-1/6},1)+\max(n_1^{1/3}, 1)\min(n_1^{-1/3}, 1)+\#{(J)}\right)<c^{\prime}.
\end{align*}
To prove the other statement first recall the evolution steps (\ref{eq:fievo}) and that $\alpha_{j,\lambda}=\vfi_{j,\lambda}-\vfi_{j,\lambda}$ for $j\in \ZZ/4$.
Using the fact that the maps ${\mbf{L}}_{\ell,\lambda}$, $\hat {\mbf{L}}_{\ell,\lambda}$ and their conjugates are monotone in $\lambda$ (as functions on $\R$) we get that  $\alpha_{\ell,\lambda}\leq \alpha_{\ell+\qt,\lambda}$ and $\alpha_{\ell+\half,\lambda}\leq \alpha_{\ell+\tqt,\lambda}.$ Moreover, since $\left({\mbf{S}}_{\ell,0}\right)^{\hat {\mbf{Q}}_{\ell}}$ and $\left(\hat {\mbf{S}}_{\ell,0}\right)^{{\mbf{Q}}_{\ell}}$ are $2\pi-$quasiperiodic functions on $\R$ we have $\lfloor\alpha_{\ell+\qt,\lambda}\rfloor_{2\pi}=\lfloor\alpha_{\ell+\half,\lambda}\rfloor_{2\pi}$ and $\lfloor\alpha_{\ell+\tqt,\lambda}\rfloor_{2\pi}=\lfloor\alpha_{\ell+1,\lambda}\rfloor_{2\pi}$. Hence, we get the following inequality:
\[\lfloor\alpha_{\ell,\lambda}\rfloor_{2\pi}\leq\lfloor\alpha_{\ell+\qt,\lambda}\rfloor_{2\pi}=
\lfloor\alpha_{\ell+\half,\lambda}\rfloor_{2\pi}\leq \lfloor\alpha_{\ell+\tqt,\lambda}\rfloor_{2\pi}=\lfloor\alpha_{\ell+1,\lambda}\rfloor_{2\pi},\] which implies that $\alpha$ never goes below an integer multiple of $2\pi$ that it passes. This means $\alpha_\ell\ge a_\diamondsuit$ and $\alpha_\ell-a_\diamondsuit\ge \hat \alpha_\ell$  for $\ell\ge \ell_1$.
 
Lemma \ref{lem:alphasum}  provides the bound
 \begin{eqnarray*}
|\ev \alpha_\ell- a_\diamondsuit|&\le& (a-a_\diamondsuit)+c
(y+\sqrt{\eps})+\ev \hat \alpha_{\ell-1}/2+\sum_{j=\ell_1}^{\ell-2} b_j
\ev \hat \alpha_j.
 \end{eqnarray*}
Since $|\ev \alpha_\ell- a_\diamondsuit|\ge \ev \hat \alpha_\ell$,  inequality (\ref{eq:tightness}) follows for $\tilde  a=a_{\diamondsuit}$ via  Lemma \ref{lem:turbogw}.

In order to deal with the $\tilde a=a^{\diamondsuit}$ case in (\ref{eq:tightness}) we define $T\in \ZZ/2$ the first time when $\alpha_T\ge a^\diamondsuit$.
Note that $\alpha$ can only pass an integer multiple of $2\pi$ in the $\ell\to \ell+\qt$ or $\ell+\half\to \ell+\tqt$ steps, and $\vfi$ evolves deterministically in these steps. This means that  $T-\half$ is a stopping time with respect to $\FF_j$, $j\in \ZZ/2$. 

For large enough $\mathcal{K}$ we have $\frac{\bar \lambda}{4
\sqrt{n_0 k}}\le \frac1{10}$. Then by Lemma \ref{lem:ash} we get the uniform bound
\[
\alpha_{j+\qt,\lambda}-\alpha_{j,\lambda}\le c n_0^{-1/2}, \quad  j\le \ell_2, j \in \ZZ/2.
\]
By the strong Markov property and the bound (\ref{eq:alpha1}) we get
\[
\ev[(\alpha_T-a^{\diamondsuit}) \mathbf{1}(T\le \ell_2)]\le c n_0^{-1/2} \quad \textup{and} \quad\ev[(\alpha_{T+\half}-a^{\diamondsuit}) \mathbf{1}(T\le \ell_2)]\le c n_0^{-1/2}.
\]
Using this together with the first part of the proof and the strong Markov property again we get
\begin{eqnarray}
\ev (\alpha_{\ell_2}-a^\diamondsuit)^{+}
  \nonumber
\nonumber
 &=&
\ev \left[ \mathbf{1}{(T\le \ell_2)}\ev\left[\alpha_{\ell_2}-a^\diamondsuit\big | \mathcal F_T  \right]\right]\\
 \nonumber
 &\le&
 c_1 (\ev \left[(\alpha_{T}-a^\diamondsuit) \mathbf{1}{(T\le \ell_2)} \right]\ + \sqrt{\eps}+ y)\\
&\le&c_1' (\sqrt{\eps}+y), \label{e_tightness2}
\end{eqnarray}
Lemma \ref{lem:alphasum} gives
\[
|\ev \alpha_{\ell}-a^\diamondsuit | \le (a^\diamondsuit-a)+c
(y+\sqrt{\eps})+\ev \hat \alpha_{\ell-1}/2+\sum_{j=\ell_1}^{\ell-2} b_j
\ev \hat \alpha_j.
\]
Then by (\ref{e_tightness2}) and the identity $|a|=-a+2
a^+$ we get
\begin{eqnarray*}
\ev |\alpha_{\ell}-a^\diamondsuit |&\le& |\ev \alpha_{\ell}-a^\diamondsuit| +2 \ev (\alpha_{\ell}-a^\diamondsuit)^+\\
&\le& (a^\diamondsuit-a)+c (y+\sqrt{\eps})+\ev
\hat \alpha_{\ell-1}/2+\sum_{j=\ell_1}^{\ell-2} b_j \ev \hat \alpha_j.
\end{eqnarray*}
Since $\hat \alpha_{\ell}\le |\alpha_\ell-a^\diamondsuit|$,  the Gronwall-type estimate in
Lemma \ref{lem:turbogw} implies (\ref{eq:tightness}) with $\tilde a=a^{\diamondsuit}$.
\end{proof}

\subsection{The uniform limit}

\begin{proposition}\label{prop:unifmiddle}
Assume that $\mathcal{K}=\mathcal{K}_n$ with $\mathcal{K}\to \infty$ and that $n_0^{-1}\mathcal{K}(n_1^{1/3}\vee 1)\to 0$. Then, $\varphi_{n_2,0}$ modulo $2\pi$ converges in distribution to the uniform distribution on $(0,2\pi).$
\end{proposition}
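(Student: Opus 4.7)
My plan is to prove uniformity modulo $2\pi$ via Weyl's equidistribution criterion: it suffices to show that for every integer $\kappa\ne 0$, $\ev[e^{i\kappa\varphi_{n_2,0}}]\to 0$ as $n\to\infty$. I would first fix a small $\eps>0$, set $\ell_1=\lfloor(1-\eps)n_0\rfloor$, and condition on $\FF_{\ell_1}$. Iterating the conditioning step $\ell\to\ell+1$ from $n_2-1$ down to $\ell_1$, and pulling absolute values outside each inner conditional expectation, yields the bound
\[
|\ev[e^{i\kappa\varphi_{n_2,0}}]|\le\ev\Bigl[\prod_{\ell=\ell_1}^{n_2-1}\bigl|\ev[e^{i\kappa\Delta\varphi_{\ell,0}}\,|\,\FF_\ell]\bigr|\Bigr],
\]
so the task reduces to showing that this random product tends to $0$.

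Second, I would invoke the single-step estimates from Proposition \ref{prop:onestepfi}. Writing $k=n_0-\ell$, the conditional drift is $\O(k^{-1})$, the conditional second moment has non-oscillatory part $\tfrac{4+\Re(\rho_\ell^2+\hat\rho_\ell^2)}{\beta k}\ge\tfrac{2}{\beta k}$ (since $|\rho_\ell|=|\hat\rho_\ell|=1$), and the third absolute moment is $\O(k^{-3/2})$. A second-order Taylor expansion of the characteristic function of $\Delta\varphi_{\ell,0}$, uniformly valid on the middle stretch because $k\ge\mathcal K(n_1^{1/3}\vee 1)\to\infty$ forces $\kappa^2/k\to 0$, gives the pointwise bound
\[
\bigl|\ev[e^{i\kappa\Delta\varphi_{\ell,0}}\,|\,\FF_\ell]\bigr|\le\exp\!\Bigl(-\tfrac{\kappa^2}{2}\,\sigma_\ell^2+C\kappa^3 k^{-3/2}\Bigr),
\]
where $\sigma_\ell^2=\mathrm{Var}(\Delta\varphi_{\ell,0}\,|\,\FF_\ell)$ equals the deterministic non-oscillatory piece plus an oscillation term of the form $\mathrm{osc}_\ell/n_0$.

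Third, I would sum the logarithms. The non-oscillatory contribution gives
\[
\sum_{\ell=\ell_1}^{n_2-1}\frac{2}{\beta k}\;\ge\;\frac{c}{\beta}\log\!\frac{\eps n_0}{\mathcal K(n_1^{1/3}\vee 1)}\;\longrightarrow\;\infty
\]
by the hypothesis $n_0^{-1}\mathcal K(n_1^{1/3}\vee 1)\to 0$, while the Taylor error sums to $\O(\sum_k k^{-3/2})=\O(1)$. I plan to control the oscillation contributions $\sum_\ell \mathrm{osc}_\ell/n_0$ using the Abel-type estimate of Lemma \ref{l_sum}, in the same spirit as in the proofs of Lemmas \ref{lem:bsum} and \ref{lem:alphasum}. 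Combining these, the random product is dominated by $\exp\bigl(-c_\kappa\log(\eps n_0/(\mathcal K(n_1^{1/3}\vee 1)))+\O(1)\bigr)\to 0$, which finishes the Weyl argument.

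The main obstacle I anticipate is the pathwise control of the oscillation terms $\mathrm{osc}_\ell$, whose coefficients involve random unit-modulus factors such as $e^{-i\varphi_{\ell,0}}$ and $\eta_\ell$ with successive differences only of size $\O(k^{-1/2})$, so that a crude Abel-summation bound on $\sum|\Delta(\text{coefficient})|$ diverges. Following the strategy of Lemma \ref{lem:alphasum}, I would partition $[\ell_1,n_2]$ into finitely many sub-intervals according to whether $|\hat\rho_\ell^2\rho_\ell^2+1|$ is large or small and apply summation by parts with the explicit growth estimates for $F^{(p)}_{\ell,\ell'}$ on each piece: in the rapidly oscillating regime summation by parts delivers the required decay, while in the resonant regime a direct estimate already gives an $\O(1)$ contribution.
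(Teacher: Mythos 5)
The first displayed inequality in your proposal,
\[
\bigl|\ev[e^{i\kappa\varphi_{n_2,0}}]\bigr|\le\ev\Bigl[\prod_{\ell=\ell_1}^{n_2-1}\bigl|\ev[e^{i\kappa\Delta\varphi_{\ell,0}}\,|\,\FF_\ell]\bigr|\Bigr],
\]
is false in general, and this is a genuine gap. The usual tower/martingale iteration lets you peel off the \emph{innermost} conditional expectation, but once you take its modulus you destroy the oscillatory structure needed to peel off the next one; the product bound would only hold if the increments $\Delta\varphi_{\ell,0}$ were conditionally independent across $\ell$, which they are not (they are linked through the Markov state). A three-step counterexample: let $\varphi_0=0$, $\Delta_0\in\{0,1\}$ uniformly, and $\Delta_1=-\Delta_0$. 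Then $\varphi_2\equiv 0$ so $|\ev[e^{i\varphi_2}]|=1$, while $|\ev[e^{i\Delta_0}|\FF_0]|=|\tfrac{1+e^{i}}{2}|<1$ and $|\ev[e^{i\Delta_1}|\FF_1]|=1$, so the right-hand side equals $|\tfrac{1+e^i}{2}|<1$. Your subsequent bound via a second-order Taylor expansion of each single-step characteristic function is otherwise on the right track (and your identification of the non-oscillatory variance term $\tfrac{4+\Re(\rho_\ell^2+\hat\rho_\ell^2)}{\beta k}\ge\tfrac{2}{\beta k}$ and the resulting logarithmic divergence match the ingredients the paper also relies on), but without a valid way to convert single-step bounds into a bound on $|\ev[e^{i\kappa\varphi_{n_2,0}}]|$ the argument does not close.

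What you actually need is control of $\sup_x\bigl|\ev[e^{i\kappa\varphi_{n_2,0}}\,|\,\varphi_{\ell',0}=x]\bigr|$ for some $\ell'<n_2$, i.e. a statement that the conditional law of $\varphi_{n_2,0}$ given $\FF_{\ell'}$ spreads out. The paper obtains exactly this via a conditional CLT rather than a multiplicative characteristic-function bound: it sets $\xi=\lfloor\mathcal{K}(n_1^{1/3}\vee 1)\rfloor$, chooses $\tau$ with $\tau\xi\le n_2$ (possible since $n_0^{-1}\mathcal{K}(n_1^{1/3}\vee 1)\to0$ permits $\tau\to\infty$), and re-runs the SDE-limit machinery of Proposition~\ref{prop:phisde} on the block $[n_2-\tau\xi,n_2]$ to show that the conditional law of $\varphi_{n_2,0}$ given $\FF_{n_2-\tau\xi}$ converges to a Gaussian whose variance is at least $\int_0^{1-(1+\tau)^{-1}}\tfrac{2}{\beta(1-t)}\,dt\to\infty$. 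From there one can, if desired, finish with Weyl's criterion applied to the limiting Gaussian, but the essential step is the conditional functional CLT, not a product bound on conditional characteristic functions. To repair your write-up you should replace the first inequality with an invocation of Proposition~\ref{prop:phisde} on a trailing block as above, after which the divergent $\sum 2/(\beta k)$ you computed becomes the variance lower bound that forces the conditional characteristic function to vanish.
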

\begin{proof}
We can use exactly the same argument as in Proposition 33, \cite{carousel}. We show that given $\epsilon>0,$ every subsequence has a further subsequence along which $\varphi_{n_2,0}$ modulo $2\pi$ is eventually $\epsilon$-close to the uniform distribution. 
We set $\xi=\lfloor\mathcal{K}(n_1^{1/3}\vee 1)\rfloor$ and pick $\tau=\tau(\epsilon)$ with $\tau \xi \le n_2$. Because of $n_0^{-1}\mathcal{K}(n_1^{1/3}\vee 1)\to 0$ we will be able to let  $\tau\to \infty$.
We will show that for any fixed $\tau$ the distribution of $\varphi_{n_2,0}-\varphi_{n_2-\tau\xi,0}$ given $\mathcal{F}_{n_2-\tau\xi}$ is asymptotically normal with a variance going to $\infty$ as $\tau\to \infty$. From that the statement will follow.

Note that the arguments of Proposition \ref{prop:phisde} can be repeated for the evolution of $\vfi_{n_2-\tau \xi+\ell,0}$ with $0\le \ell\le \tau \xi$ which gives that $\vfi_{n_2,0}$ conditioned on $\FF_{n_2-\tau\xi}$ converges to 
$\vfi_0(1-(1+\tau)^{-1})$ where $\vfi_{0}(t)$ is the solution of (\ref{eq:sdelimfi}) with $\lambda=0$. This is just a normal random variable, its variance is given by the integrating the sum of the squares of the independent diffusion coefficients on $[0,1-(1+\tau)^{-1}]$. This is at least as big  as the variance coming from the $dW$ term which gives
$\int_{0}^{1-(1+\tau)^{-1}}\frac{2}{\beta(1-t)}dt$. This goes to $\infty$ if $\tau\to \infty$ as required.
\end{proof}

\section{Last stretch}\label{s:last}
The purpose of this section is to prove that on the interval $[n_2,n]$ the relative target phase 
function $\alpha^{\odot}_{\ell,\lambda}=\vfi^{\odot}_{\ell,\lambda}-\vfi_{\ell, 0}^{\odot}$ does not change much.
\begin{proposition}\label{prop:end}
For any fixed $\lambda\in \R$ and $\mathcal{K}>0$ we have $\alpha^{\odot}_{n_2,\lambda}\cp 0$.
\end{proposition}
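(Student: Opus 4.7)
We follow the strategy used for the analogous Proposition 17 of \cite{carousel}, adapted to the Laguerre setting. The backward evolution
\[
\phi^{\odot}_\ell = \phi^{\odot}_{\ell+1} \lstar \hat{\mbf{M}}_\ell^{-1} \hat{\mbf{J}}_\ell^{-1} \mbf{M}_\ell^{-1} \mbf{J}_\ell^{-1}, \qquad \phi^{\odot}_n = 0,
\]
splits naturally at $\ell = n_0$: the operator $\mbf{J}_\ell \hat{\mbf{J}}_\ell$ is elliptic for $\ell < n_0$ but becomes loxodromic for $\ell \ge n_0$, since the quantities under the square roots in (\ref{def:rho}) and (\ref{def:rrho}) change sign there.

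The first step handles the edge stretch $\ell \in [n_0, n]$, working directly with the unregularized phase $\phi^{\odot}_{\ell, \lambda}$. On this stretch $\mbf{J}_\ell \hat{\mbf{J}}_\ell$ has two real fixed points on $\partial \HH$, and its backward iteration contracts $\partial \UU$ toward the attracting fixed point of the inverse. The $\lambda$-dependent perturbations $\mbf{M}_\ell, \hat{\mbf{M}}_\ell$ shift this attracting direction by only $\O(\lambda/(n_0^{1/2} p_\ell))$. The plan is to show that the cumulative contraction from $\ell = n$ down to any point just past $n_0$ is strong enough that two orbits evolving under $\lambda$- and $0$-dependent dynamics from the common initial condition $\phi^{\odot}_n = 0$ remain $o(1)$-close in probability.

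The second step handles the near-edge elliptic stretch $\ell \in [n_2, n_0)$, where the regularization of Section \ref{s:phase} applies and we can work with $\vfi^{\odot}_{\ell,\lambda}$. This stretch has only $\O(\mathcal{K}(n_1^{1/3} \vee 1))$ steps, negligible compared to $n_0$. We would derive single-step bounds for the backward increments of $\vfi^{\odot}_{\ell,\lambda} - \vfi^{\odot}_{\ell,0}$ analogous to those of Proposition \ref{prop:alpha1step}, and combine them with a Gronwall estimate as in Lemma \ref{lem:turbogw} to conclude that the additional change of $\alpha^{\odot}$ over this stretch is also $o(1)$ in probability.

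The main technical obstacle will be the transition region near $\ell = n_0$, where the hyperbolic contraction of the edge stretch degenerates to parabolic and the conjugation $\mbf{T}_\ell$ used in the elliptic regularization simultaneously becomes singular. Matching the one-sided estimates across $n_0$ requires care, and is the step where the Laguerre-specific asymptotics (\ref{eq:rhodef})--(\ref{eq:hrhodef}) together with the moment bounds (\ref{eq:moments}) must substitute for their Hermite counterparts in \cite{carousel}. Once the matching is done, the rest of the argument proceeds essentially verbatim from that reference.
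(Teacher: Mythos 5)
Your proposal takes a genuinely different route from the paper, and it has a significant gap.

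The paper splits into two cases according to whether $n_1 = n - n_2 - \mathcal{K}(n_1^{1/3}\vee 1)$ stays bounded or tends to infinity. In the bounded case the argument is elementary: only finitely many steps remain, and since $\Lambda - \mu_n = \lambda/(4\sqrt{n_0}) \to 0$ while $m-n\to\infty$, each M\"obius map $\mbf{R}_{n-j,\lambda}$ converges to $\mbf{R}_{n-j,0}$, so $\alpha^{\odot}_{n_2,\lambda}\to 0$ deterministically. In the unbounded case the paper observes that the recursion on $[n_2,n]$ is governed by the trailing $(2n^*)\times(2n^*)$ block, distributed as $\tilde A_{n^*,m^*}^{D^{(n^*)}}$ with $n^*=n-n_2$, $m^*=m-n_2$, and uses the identity $\sqrt{m_1}\pm\sqrt{n_1}=\mu_n$ to recognize the original bulk scaling as the \emph{edge} scaling of this smaller Laguerre matrix. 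Then $\alpha^{\odot}_{n_2,\lambda}\not\to 0$ (i.e.\ not converging to $0$ mod $2\pi\Z$) is precisely the event that the reduced problem has an eigenvalue in a rescaled interval, and under edge rescaling that interval shrinks to a point; by Theorem \ref{thm:softedge} and the non-atomicity of $\Airyb$ with general boundary data (\cite{RRV}, Remark 3.8), that probability goes to $0$.

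The gap in your contraction argument is exactly what this eigenvalue-counting reduction is designed to handle. The target phase $\phi^{\odot}_{\ell,\lambda}$ lives on $\R$, the universal cover of $\partial\UU$. Even if the loxodromic maps $\mbf{J}_\ell\hat{\mbf{J}}_\ell$ contract the two orbits arbitrarily close together in $\partial\UU$, the lifted orbits in $\R$ may still end up differing by a nonzero multiple of $2\pi$ — and by the Sturm-type counting formula (\ref{evcount}), that winding difference \emph{is} the number of eigenvalues of the trailing block in $(\mu_n, \mu_n+\lambda/(4\sqrt{n_0})]$. Contraction on the circle simply cannot see this integer; you would have to separately show that with high probability no eigenvalue falls in that window. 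Once you try to do so you are forced into precisely the edge analysis the paper invokes (or, if $n_1$ stays bounded, into the trivial case). A secondary issue: you propose to work with $\vfi^{\odot}_{\ell,\lambda}$ on the elliptic near-edge stretch $[n_2,n_0)$ via a Gronwall estimate, but the paper's proof does not require any such estimate on that stretch — once the edge reduction is in place, the statement is proved directly at $\ell = n_2$. So the Gronwall step you outline is superfluous in the paper's framework, and the contraction step you outline cannot close the argument on its own.
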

The length of the interval $[n_2,n]$ is equal to $n_1+\mathcal{K}(n_1^{1/3}\vee 1)$, up to an error of order 1. By taking an appropriate subsequence of $n$ (see Remark \ref{rem:subseq}) we may assume that $n_1$ has a finite or infinite limit. We will consider these two cases separately.

\begin{proof}[Proof of Proposition \ref{prop:end} in the $\lim n_1<\infty$ case] By (\ref{eq:n0n1}) we have that $|m-n-\mu_n^2|/\mu_n$ converges, by taking an appropriate subsequence we can assume that the limit also exists without the absolute values.
Note that condition (\ref{eq:mu_cond}) implies that $\lim m/n>1$ and thus $m-n\to \infty$.

We may also assume that $n- n_2$ is eventually equal to an integer $\xi$. From there we proceed similarly as in \cite{carousel}. We first note that by (\ref{eq:target1}) and (\ref{eq:target2})
we have 
\[
\vfi^{\odot}_{n-\xi,\lambda}\lstar \mbf{Q}_{n-\xi-1}^{-1}=0\lstar \mbf{R}_{n-1,\lambda}\dots \mbf{R}_{n-\xi,\lambda} \mathbf{T}_{n-\xi}
\]
where $ \mbf{R}_{\ell,\lambda} =\hat {\mbf{M}}_\ell^{-1} \hat{\mbf{J}}_\ell^{-1}  {\mbf{M}}_\ell^{-1}  {\mbf{J}}_\ell^{-1}$. From (\ref{eq:recrr}) and (\ref{eq:recr}) we get
\[
 r\ldot \mbf{R}_{n-j,\lambda} =\frac{r s_{n-j}^2+r s_{n-j} Y_{n-j}-s_{n-j} \Lambda }{p_{n-j} p_{n-j+1}-\Lambda ^2+p_{n-j+1} X_{n-j}+r s_{n-j} \Lambda +r Y_{n-j} \Lambda }
\]
where $\Lambda=\mu_n+\lambda/(4 n_0^{1/2})$. Using (\ref{eq:n0n1}),  $s_{n-j}=\sqrt{j-1/2}$, $p_{n-j}=\sqrt{m-n+j-1/2}$ and $m-n\to \infty$ we get that $r\ldot \mbf{R}_{n-j,\lambda} -r\ldot \mbf{R}_{n-j,0} \cp 0$ for any fixed $j$. This leads to  
\[
\alpha^{\odot}_{n_2,\lambda}=\vfi^{\odot}_{n-\xi,\lambda}\lstar \mbf{Q}_{n-\xi-1}^{-1}-\vfi^{\odot}_{n-\xi,0}\lstar \mbf{Q}_{n-\xi-1}^{-1}\to 0.\qedhere
\]
\end{proof}
If $\lim n_1=\infty$ then we will need the edge scaling results proved in \cite{RRV} which are summarized in Theorem \ref{thm:softedge} of the introduction. 
The initial condition
$f(0)=0, f'(0)=1$ for the operator $\mathcal{H}_\beta$ in the theorem comes from the fact that the discrete eigenvalue
equation for $A_{n,m} A_{n,m}^T$ with
an eigenvalue $\Lambda=(\sqrt{n}+ \sqrt{m})^2+\frac{(\sqrt{m}+ \sqrt{n})^{4/3}}{(mn)^{1/6}} \nu$ is
equivalent to a three-term recursion for the vector entries
$w_{\ell,\Lambda}$ (c.f. (\ref{ev}))
with the initial condition  $w_{0,\nu}=0$ and $w_{1,\nu}\neq 0$.

By \cite{RRV}, Remark 3.8, the results of \cite{RRV} extend to solutions of the
same three-term recursion with  more general initial conditions.
We say that a value of $\nu$ is an eigenvalue for a family of
recursions parameterized by $\nu$ if the corresponding recursion
reaches $0$ in its last step.
%
Suppose that for given $\zeta\in
[-\infty,\infty]$ the initial condition for the three-term recursion equation
satisfies
\begin{equation}\label{eq:initial}
\frac{(mn)^{-1/3}}{(\sqrt{m}+\sqrt{n})^{-2/3}}\frac{w_{0,\nu}}{
(w_{1,\nu}-w_{0,\nu})}=\frac{(mn)^{-1/3}}{(\sqrt{m}+\sqrt{n})^{-2/3}} (r_{0,\nu}-1)^{-1} \cp \zeta,
\end{equation}
uniformly in $\nu$ with $r_0:=w_{1,\nu}/w_{0,\nu}$. Here
the factor $\frac{(mn)^{1/3}}{(\sqrt{m}+\sqrt{n})^{2/3}}$ is the spatial scaling for the
problem (\cite{RRV}, Section 5).   Then the eigenvalues of this family of recursions
converge to those of the stochastic Airy operator with initial
condition $f(0)/f'(0)=\zeta$. The corresponding point process
$\Xi_\zeta$ is also a.s.~simple and it will satisfy the following non-atomic property: for any $x\in \R$ we have $\pr(x\in \Xi_\zeta)=0$ (see \cite{RRV}, Remark
3.8). Similar statement holds at the lower edge if $\liminf m/n>1$ with $(r_{n,\nu}+1)^{-1}$ in (\ref{eq:initial}). (In this case one first multiplies the off-diagonal entries of  $A_{n,m} A_{n,m}^T$ by $-1$ before applying the arguments of \cite{RRV}, this will not change the eigenvalues.)

If $m/n\to \gamma\in [1,\infty)$ then we can rewrite (\ref{eq:initial}) jointly for the upper and lower soft edge as
\begin{equation}
\frac{\gamma^{-1/3}}{(\sqrt{\gamma}\pm 1)^{-2/3}} n^{-1/3}(r_{0,\nu}\mp 1)^{-1} \cp \zeta,\label{eq:initial1}.
\end{equation}
The multiplier is 1 in the case $\gamma=\infty$ and it is 
always a finite nonzero value unless $\gamma=1$ and we are at the lower edge.

\begin{proof}[Proof of Proposition \ref{prop:end} if $\lim n_1=\infty$]

By taking an appropriate subsequence, we may assume that $\mu_n-\sqrt{m-n}$ is always positive or always negative.
According to the proof of Lemma 34 in \cite{carousel} we need to consider  the family of recursions 
\[
r_{\ell+1, \nu}=r_{\ell,\nu} \ldot{\mbf{J}}_\ell {\mbf{M}}_\ell  \hat{\mbf{J}}_\ell \hat {\mbf{M}}_\ell, \quad n_2\le \ell\le n
\]
with initial condition 
\[
r_{n_2,\nu}=x\ldot \mbf{T}_{n_2}^{-1}=\Im(\rho_{n_2}) x+\Re(\rho_{n_2})
\]
for a given $x\in \R$ and show that the probability of having an eigenvalue in $[0,\lambda]$ converges to 0 as $n\to \infty$. 

We introduce
\[
n^*=n-n_2, \quad m^*=m-n_2.
\]
Note that the recursion is determined by the bottom $(2n^*)\times(2  n^*)$ submatrix of $\tilde A_{n,m}^{D^{(n)}}$ in (\ref{eq:tridag}) which has the exact same  distribution as the matrix $\tilde A_{n^*,  m^*}^{D^{(n^*)}}$. Thus we can consider the eigenvalue equation for $\tilde A_{n^*,  m^*}^{D^{(n^*)}}$ with generalized initial condition 
\[
r^*_{0,\lambda}=\Im(\rho_{n_2}) x+\Re(\rho_{n_2}).
\]
(We will use $*$ to denote that we are working with the smaller matrix.)
We will transform this back to an eigenvalue equation for $\tilde A_{n^*,  m^*}$ and then $A_{ n^*, m^*} A_{ n^*, m^*}^T$ with a generalized initial condition.

Recall the recursion  (\ref{eq:recrr}) and  that $r^*_\ell=u^*_{\ell+1}/v^*_{\ell}$,  $\hat r^*_{\ell}=v^*_{\ell}/u^*_{\ell}$. From this we get
\begin{eqnarray}\nonumber
\frac{v^*_1}{v^*_0}&=&r^*_0 \hat r^*_1=r^*_0 \left(-\frac1{r^*_0}\cdot \frac{s^*_0}{p^*_0} +\frac{\Lambda}{p^*_0}\right)\left(1+\frac{X^*_0}{p^*_0}\right)^{-1}\\
&=&\left(- \frac{s^*_0}{p^*_0} +(\Im(\rho_{n_2}) x+\Re(\rho_{n_2}))\frac{\Lambda}{p^*_0}\right)\frac{\beta p_1^* p_0^*}{\tilde \chi^2_{\beta(m^*-1)}}\label{eq:init0}
\end{eqnarray}
where $\Lambda=\mu_n+\frac{\lambda}{4\sqrt{n_0}}$. Denote the solution of the generalized eigenvalue equation for $\tilde A_{n^*,  m^*}$  corresponding to $\Lambda$ with $\tilde u_1^*, \tilde v_1^*, \dots$. If we remove the conjugation with $D^{(n^*)}$ from $\tilde A_{n^*,  m^*}^{D^{(n^*)}}$ then from (\ref{eq:init0}) we get
\begin{equation}\label{eq:init1}
\frac{\tilde v^*_1}{\tilde v^*_0}=\left(- \frac{s^*_0}{p^*_0} +(\Im(\rho_{n_2}) x+\Re(\rho_{n_2}))\frac{\Lambda}{p^*_0}\right) \frac{\chi_{\beta(n^*-1)} p_0^*}{ \tilde \chi_{\beta(m^*-1)}  s_1^*}.
\end{equation}
By Remark \ref{rem:sing} this is exactly the initial condition for the generalized eigenvalue equation for 
$A_{ n^*, m^*} A_{ n^*, m^*}^T$ with eigenvalue $\Lambda^2=\mu_n^2+\frac{\mu_n \lambda}{2\sqrt{n_0}}+\frac{\lambda^2}{16 n_0}$.

The spectrum of the matrix $A_{ n^*, m^*} A_{ n^*, m^*}^T$ is concentrated asymptotically to $[(\sqrt{m^*}-\sqrt{n^*})^2,(\sqrt{m^*}+\sqrt{n^*})^2]$. A direct computation shows that
$
\sqrt{m_1}\pm \sqrt{n_1}=\mu_n,
$
where we have $+$ if $\mu_n-\sqrt{m-n}>0$ and $-$ otherwise. This means that if $\mu_n<\sqrt{m-n}$ then our original bulk scaling around $\mu_n$ corresponds to the lower edge scaling of  $A_{ n^*, m^*} A_{ n^*, m^*}^T$ and if $\mu_n> \sqrt{m-n}$ then we get the upper edge scaling. (Note that because of our assumptions if $\mu_n<\sqrt{m-n}$ than $\liminf m/n>1$.)

Again, by taking an appropriate subsequence, we may assume that $m^*/n^*\to \gamma^*\in[1,\infty]$.
We first check that the initial condition (\ref{eq:init1}) satisfies (\ref{eq:initial1}), this is equivalent to showing that
$
(n^*)^{1/3} \left( \frac{\tilde v^*_1}{\tilde v^*_0}\mp 1\right)
$
converges in probability to a constant. Since $n_1\to \infty$ we have
\begin{align*}
&m^*>n^*\to \infty, \qquad \rho_{n_2}=\pm 1+i \sqrt{\mathcal{K}} (n^*)^{-1/3}+o( (n^*)^{-1/3}),\\
&\hskip10mmn_1=n^*-\mathcal{K} (n^*)^{1/3}, \qquad m_1=m^*-\mathcal{K} (n^*)^{1/3},\\
&\mu_n=\sqrt{m^*}\pm\sqrt{n^*}-\frac{\mathcal{K}}{2} (n^*)^{1/3}\left((m^*)^{-1/2}\pm(n^*)^{-1/2}\right)+o((n^*)^{-2/3}).
\end{align*}
Since $\ell^{-1/2}{\chi_\ell}\cp 1$ as $\ell\to \infty$ we get that in probability
\[
\lim \, (n^*)^{1/3} \left( \frac{\tilde v^*_1}{\tilde v^*_0}\mp 1\right)=\sqrt{\mathcal{K}} x \left(1 \pm (\gamma^*)^{-1/2}\right)
\]
and the convergence is uniform if we assume that $\lambda$ is bounded. This means that we may apply the edge scaling result, we just need to convert the scaling from the bulk to the edge. In the bulk we were interested in the interval $I=[\mu_n^2, (\mu_n+\lambda n_0^{-1/2}/ 4)^2]$. If we apply the edge scaling, then this interval becomes $\frac{(m^* n^*)^{1/6}}{(\sqrt{m^*}\pm\sqrt{n^*})^{4/3}} (I-(\sqrt{m^*}\pm \sqrt{n^*})^2)$. Using our asymptotics for $\mu_n$ we get that for the end point of the interval 
\[
\frac{(m^* n^*)^{1/6}}{(\sqrt{m^*}\pm\sqrt{n^*})^{4/3}} (\mu_n^2-(\sqrt{m^*}\pm \sqrt{n^*})^2)\to-\frac{\mathcal{K}}{2} (1\pm (\gamma^*)^{-1/2})^{1/6},
\]
and for the length we get
\[
\lim \left((\mu_n+\lambda n_0^{-1/2}/ 4)^2-\mu_n^2\right)\frac{(m^* n^*)^{1/6}}{(\sqrt{m^*}\pm\sqrt{n^*})^{4/3}} =\frac{(\gamma^*)^{1/6}}{(\sqrt{\gamma^*}\pm 1)^{1/3}} \lim\frac{\lambda}{\sqrt{n_0}}=0.
\]
(We used that if $\gamma^*=1$ then we must have $+$ in the $\pm$.)
This means that after the edge rescaling the interval shrinks to a point, meaning that the probability that 
our original recursion has an eigenvalue in $[0,\lambda]$ converges to the probability that the limiting edge point process has a point at a given value  which is equal to 0. 
\end{proof}

\section{Appendix}
In this section we provide the needed oscillation estimates.
\begin{lemma}\label{l_vdcorp}
Suppose that $2\pi>\theta_1>\theta_2>\ldots>\theta_m>0$ and
let $s_\ell=\sum_{j=1}^\ell \theta_j$. Then
\[
|\sum_{j=1}^m e^{i s_j}|\le
c(\theta_m^{-1}+(2\pi-\theta_1)^{-1})\le c'(|e^{i\theta_m/2}-1|^{-1}+|e^{i\theta_1/2}+1|^{-1}).
\]
\end{lemma}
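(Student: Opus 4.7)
The standard approach is Abel summation, exploiting the monotonicity of the sequence $\theta_j$. The starting point is the identity
\[
e^{is_j} = \frac{e^{is_j} - e^{is_{j-1}}}{u_j}, \qquad u_j := 1 - e^{-i\theta_j},
\]
which is valid because $s_j - s_{j-1} = \theta_j$ (with $s_0 = 0$). Applying summation by parts gives
\[
\sum_{j=1}^m e^{is_j} = \frac{e^{is_m}}{u_m} - \frac{1}{u_1} + \sum_{j=1}^{m-1} e^{is_j}\left(\frac{1}{u_j} - \frac{1}{u_{j+1}}\right).
\]
The two boundary terms contribute at most $1/|u_m| + 1/|u_1|$. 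Since $|u_j| = 2\sin(\theta_j/2)$ with $\theta_j \in (0,2\pi)$, elementary estimates give $1/|u_m| \le c\,\theta_m^{-1}$ (for small $\theta_m$ we have $\sin(\theta_m/2)\gtrsim \theta_m$; otherwise $|u_m|$ is bounded below) and similarly $1/|u_1| \le c\,(2\pi - \theta_1)^{-1}$.

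The key observation for controlling the oscillating sum is the exact formula
\[
\frac{1}{u_j} = \frac{1}{1 - e^{-i\theta_j}} = \frac{1}{2} - \frac{i}{2}\cot(\theta_j/2).
\]
Thus $\Re(1/u_j) = 1/2$ is \emph{constant} in $j$, while $\Im(1/u_j) = -\tfrac12\cot(\theta_j/2)$ is monotone in $j$, since $\theta_j$ is strictly decreasing and $\cot$ is strictly monotone on $(0,\pi)$. Consequently the differences $1/u_j - 1/u_{j+1}$ are purely imaginary and all carry the same sign. The sum of their absolute values therefore telescopes:
\[
\sum_{j=1}^{m-1}\left|\frac{1}{u_j} - \frac{1}{u_{j+1}}\right| = \left|\frac{1}{u_1} - \frac{1}{u_m}\right| \le \frac{1}{|u_1|} + \frac{1}{|u_m|},
\]
and the triangle inequality, applied to the Abel identity above, yields the first stated bound. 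The second inequality follows from the elementary comparisons $|e^{i\theta_m/2}-1| = 2\sin(\theta_m/4) \asymp \theta_m$ and $|e^{i\theta_1/2}+1| = 2\sin((2\pi-\theta_1)/4) \asymp 2\pi - \theta_1$, valid on the relevant ranges.

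\textbf{Main obstacle.} The only nontrivial step is recognizing the monotonicity structure of $1/u_j$ -- constant real part, monotone imaginary part. Without this observation one would bound $|1/u_j - 1/u_{j+1}|$ term by term, losing a factor that can be as large as $m$ and failing to reproduce the sharp $\theta_m^{-1} + (2\pi-\theta_1)^{-1}$ estimate. Once that structure is noticed, the rest is a routine Abel summation.
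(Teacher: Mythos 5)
Your Abel-summation argument is sound, and the key observation — that $1/u_j = 1/(1-e^{-i\theta_j}) = \tfrac12 - \tfrac{i}{2}\cot(\theta_j/2)$ has constant real part and monotone imaginary part, so the differences telescope — is exactly the right structural insight. (The paper itself does not prove this lemma; it cites Lemma 36 of \cite{carousel}, so your proof is a genuine self-contained argument.) The reduction $|e^{i\theta_1/2}+1| = 2\sin((2\pi-\theta_1)/4)$ and $|e^{i\theta_m/2}-1| = 2\sin(\theta_m/4)$ for the second inequality is also correct.

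One imprecision worth fixing: the claim $1/|u_m|\le c\,\theta_m^{-1}$ is not literally true — if $\theta_m$ is close to $2\pi$ (which is allowed, since only $\theta_m<\theta_1<2\pi$), then $|u_m|=2\sin(\theta_m/2)$ is small while $\theta_m^{-1}$ is bounded, so $1/|u_m|$ cannot be controlled by $\theta_m^{-1}$ alone; the parenthetical ``otherwise $|u_m|$ is bounded below'' fails in that regime. The correct statement is $1/|u_j| \le c\bigl(\theta_j^{-1} + (2\pi-\theta_j)^{-1}\bigr)$ for each $j$, and one then uses the monotonicity $\theta_m<\theta_1$ to get $\theta_1^{-1}\le\theta_m^{-1}$ and $(2\pi-\theta_m)^{-1}\le(2\pi-\theta_1)^{-1}$, whence both $1/|u_1|$ and $1/|u_m|$ are bounded by $c\bigl(\theta_m^{-1}+(2\pi-\theta_1)^{-1}\bigr)$. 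With that repair the proof is complete.
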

\begin{proof}
The first inequality is the same as Lemma 36  from \cite{carousel} and the second inequality is straightforward.
\end{proof}

\begin{lemma}\label{l_sum}
Let $1\le \ell_1<\ell_2\le n_0$ and $F^{(i)}_{\ell_1,\ell_2}=\sum_{j=\ell_1}^{\ell_2} \eta_j^i$ for $i=1,2$ and $g_\ell\in \CC\,$. Then for $i=1, 2$:
\begin{align}\label{partial1}
&\left|\Re\sum_{\ell=\ell_1}^{\ell_2}g_\ell \eta_{\ell}^i\right|\leq  |F^{(i)}_{\ell_1,\ell_2}| |g_{{\ell_2}}|
+\sum_{\ell=\ell_1}^{\ell_2-1}|F^{(i)}_{\ell_1,\ell}||g_{\ell+1}-g_\ell|,\\
&\left|\Re\sum_{\ell=\ell_1}^{\ell_2}g_\ell \eta_{\ell}^i\right|\leq  |F^{(i)}_{\ell_1,\ell_2}|g_{{\ell_1}}||
+\sum_{\ell=\ell_1}^{\ell_2-1}|F^{(i)}_{{\ell+1},\ell_2}||g_{\ell+1}-g_\ell|. \label{partial2}
\end{align}
We also have the following estimates:
\begin{eqnarray}\label{F1}
|F^{(1)}_{\ell_1,\ell_2}|&\le& c(1+n_1^{1/2} k_2^{-1/2})
\end{eqnarray}
\begin{eqnarray}\label{F2}
|F^{(2)}_{\ell_1,\ell_2}|&\le& \left\{ \begin{array}{ll}
c\left(|\rho_{\ell_1}^2 \hat \rho_{\ell_1}^2+1|^{-1} +1+n_1^{1/2} k_2^{-1/2}\right)&\textup{if ($\star$) or ($\star\star$)},\\[5pt]
c\left(|\rho_{\ell_2}^2 \hat \rho_{\ell_2}^2+1|^{-1} +1+n_1^{1/2} k_1^{-1/2}\right)&\textup{if ($\star\star\star$)}
\end{array}\right.
\end{eqnarray}
where the conditions are given by
\begin{eqnarray}\nonumber
(\star)&:& \mu_n\ge \sqrt{m-n} \textup{ with } k_0, k_1\ge \sqrt{m_1 n_1} \,\textup{ or }\,  k_0, k_1\le \sqrt{m_1 n_1},\\
(\star\star)&:& \mu_n< \sqrt{m-n} \textup{ with } k_0, k_1\ge \sqrt{m_1 n_1},\\
(\star\star\star)&:& \mu_n< \sqrt{m-n} \textup{ with } k_0, k_1\le \sqrt{m_1 n_1}.\nonumber
\end{eqnarray}
\end{lemma}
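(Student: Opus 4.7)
The first pair of inequalities is summation by parts. Setting $F^{(i)}_{\ell_1,\ell_1-1}:=0$ and writing $\eta_\ell^i=F^{(i)}_{\ell_1,\ell}-F^{(i)}_{\ell_1,\ell-1}$, Abel's identity
\[
\sum_{\ell=\ell_1}^{\ell_2} g_\ell \eta_\ell^i \,=\, g_{\ell_2} F^{(i)}_{\ell_1,\ell_2} - \sum_{\ell=\ell_1}^{\ell_2-1}(g_{\ell+1}-g_\ell)F^{(i)}_{\ell_1,\ell}
\]
yields (\ref{partial1}) after taking absolute values of the real part; the reversed bound (\ref{partial2}) follows from the symmetric decomposition $\eta_\ell^i=F^{(i)}_{\ell,\ell_2}-F^{(i)}_{\ell+1,\ell_2}$.

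For the oscillatory estimates (\ref{F1}) and (\ref{F2}) the plan is to write $\eta_\ell=e^{i\sigma_\ell}$ with angular increments $\theta_\ell:=\arg(\rho_\ell^2\hat\rho_\ell^2)\in(0,2\pi)$, and to apply the van der Corput estimate of Lemma \ref{l_vdcorp}. From (\ref{eq:rhodef})--(\ref{eq:hrhodef}), $\theta_\ell$ is an elementary function of $k=n_0-\ell$: in the regime $\mu_n\ge\sqrt{m-n}$ it equals $2\arctan\sqrt{k/n_1}+2\arctan\sqrt{k/m_1}$ and is strictly monotone in $k$, while in the regime $\mu_n<\sqrt{m-n}$ it equals $2\pi-2\arctan\sqrt{k/n_1}+2\arctan\sqrt{k/m_1}$, whose derivative in $k$ vanishes only at $k_\star:=\sqrt{n_1 m_1}$ and is monotone on each side. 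A direct computation shows that $\theta_\ell=\pi$ (equivalently $\rho_\ell^2\hat\rho_\ell^2+1=0$) if and only if $k=k_\star$; this is the stationary-phase point for the squared angular increment $2\theta_\ell\pmod{2\pi}$ of $\eta_\ell^2$. The hypotheses $(\star),(\star\star),(\star\star\star)$ are precisely what is needed to keep $[\ell_1,\ell_2]$ on one monotone side of $k_\star$, so that both $\sigma_\ell$ and $2\sigma_\ell$ have monotone constant-sign increments on the whole range (after possibly replacing $\theta$ by $2\pi-\theta$ to match the orientation convention in Lemma \ref{l_vdcorp}).

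Given this monotonicity, Lemma \ref{l_vdcorp} bounds $F^{(1)}$ by a constant multiple of $\theta^{-1}_{\textup{end}}+(2\pi-\theta_{\textup{end}})^{-1}$. Using $n_1\le m_1$, at the small-$k$ endpoint $\theta$ (or $2\pi-\theta$) is comparable to $2\sqrt{k/n_1}$, yielding $\theta_{\ell_2}^{-1}\le c(1+\sqrt{n_1/k_2})$ and hence (\ref{F1}). For $F^{(2)}$ the same argument applied to $\eta_\ell^2=e^{2i\sigma_\ell}$ gives a bound where the endpoint closest to $k_\star$ contributes $|2\theta-2\pi|^{-1}$, which is comparable to $|\rho^2\hat\rho^2+1|^{-1}$, while the other endpoint again contributes the $1+n_1^{1/2}k^{-1/2}$ factor as in (\ref{F1}). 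The direction of monotonicity of $\theta$ determines which of $\ell_1,\ell_2$ plays each role: in cases $(\star)$ and $(\star\star)$ the endpoint $\ell_1$ is closest to $k_\star$, producing the first line of (\ref{F2}) with the $k_2^{-1/2}$ tail, while in $(\star\star\star)$ the reversed monotonicity swaps the two endpoints and yields the second line with $k_1^{-1/2}$.

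The main technical hurdle is the case analysis: matching the monotonicity direction of $\theta_\ell$ and of $2\theta_\ell\pmod{2\pi}$ in each sub-region to the convention of Lemma \ref{l_vdcorp}, verifying that the endpoint estimate collapses uniformly to the clean form $1+n_1^{1/2}k^{-1/2}$ using $n_1\le m_1$, and correctly identifying the stationary-phase factor $|\rho^2\hat\rho^2+1|^{-1}$ at the right endpoint in each of the three configurations.
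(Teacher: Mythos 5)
Your proposal follows the same route as the paper: summation by parts (Abel) for \eqref{partial1}--\eqref{partial2}, then the van der Corput estimate of Lemma~\ref{l_vdcorp} applied to the angular increments $\theta_\ell=\arg(\rho_\ell^2\hat\rho_\ell^2)$ for $F^{(1)}$ and $2\theta_\ell$ for $F^{(2)}$, with a split at $k_\star=\sqrt{m_1 n_1}$. Your explicit formulas for $\theta_\ell$ in terms of $\arctan\sqrt{k/n_1}$ and $\arctan\sqrt{k/m_1}$ are correct, as is the identification of $k_\star$ as the critical splitting point and the endpoint asymptotics $\theta\sim 2\sqrt{k/n_1}$ near the small-$k$ end (using $n_1\le m_1$).

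There is however a genuine conflation in your central geometric claim. You assert that ``$\theta_\ell=\pi$ (equivalently $\rho_\ell^2\hat\rho_\ell^2+1=0$) if and only if $k=k_\star$; this is the stationary-phase point for the squared angular increment $2\theta_\ell\pmod{2\pi}$.'' This fuses two different mechanisms that operate in the two sign regimes. In the regime $\mu_n\ge\sqrt{m-n}$, $\theta_\ell$ is strictly monotone, and $\theta_\ell=\pi$ at $k=k_\star$; there is no stationary point, but $2\theta_\ell$ crosses $2\pi$ there, so $2\theta_\ell\pmod{2\pi}$ wraps around and the split is needed for $F^{(2)}$ only. In the regime $\mu_n<\sqrt{m-n}$, one has $\theta_\ell\in(\pi,2\pi)$ for \emph{all} $k$ --- the real part of $\rho_\ell\hat\rho_\ell$ in \eqref{roro2} is $-\frac{k+\sqrt{m_1n_1}}{\sqrt{k+m_1}\sqrt{k+n_1}}<0$, so $\theta_\ell$ never equals $\pi$. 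Here $k_\star$ is instead the point where $\theta_\ell'$ vanishes, so $\theta_\ell$ itself fails to be monotone across it, and the split is needed for both $F^{(1)}$ and $F^{(2)}$. Your wording attributes the $\theta_\ell=\pi$ property to $k_\star$ in general, which is false in the second regime. The conclusion is still reachable --- the split is at $k_\star$ in all cases and the endpoint contributions have the stated form --- but the justification you give for why $k_\star$ is special does not survive in the $\mu_n<\sqrt{m-n}$ case, so you would need to separate the two regimes explicitly (as the paper does, via the two formulas for $\rho_\ell\hat\rho_\ell$) rather than presenting a single stationary-phase picture. Once that distinction is made, the rest of your plan, including which endpoint receives the $|\rho^2\hat\rho^2+1|^{-1}$ factor and which receives the $1+n_1^{1/2}k^{-1/2}$ factor, matches the paper.
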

\begin{proof}
The bounds (\ref{partial1}) and (\ref{partial2}) follow from partial summation. In order to prove the bounds on $F^{(i)}_{\ell_1, \ell_2}$
we will apply Lemma \ref{l_vdcorp}, but we need to consider various cases. Note that the constant $c$ might change from line to line.\\
\noindent \textbf{Case 1: $\mu_n\ge \sqrt{m-n} , \Re\rho_\ell\ge0$}.\\
We have the bounds
\begin{eqnarray*}
k^{1/2} (n_1+k)^{-1/2}\le \arg \rho_{\ell}, \quad n_1^{1/2} (n_1+k)^{-1/2}\le \pi/2-\arg \rho_{\ell},\\
k^{1/2} (m_1+k)^{-1/2}\le \arg \hat\rho_{\ell}, \quad m_1^{1/2} (m_1+k)^{-1/2}\le \pi/2-\arg \hat\rho_{\ell}.
\end{eqnarray*}
and $\arg(\rho_\ell \hat \rho_\ell)$ is decreasing.
The sequence $\arg (\hat \rho_\ell^2 \rho_\ell^2)$ satisfies the conditions of Lemma \ref{l_vdcorp}
and using $m_1>n_1$ and $m_1>c n>c n_0$ we get the bound
 \[
|\sum_{\ell=\ell_1}^{\ell_2} \eta_\ell|\le c\left((n_1+{k_2})^{1/2} k_2^{-1/2}+(k_1+{m_1})^{1/2} m_1^{-1/2}\right)\le c(n_1^{1/2} k_2^{-1/2}+1).
\]
We have
\begin{equation}\nonumber
\rho_\ell \hat \rho_\ell=\frac{\sqrt{m_1 n_1}-k}{\sqrt{k+m_1} \sqrt{k+n_1}}+\frac{i \sqrt{k} \left(\sqrt{m_1}+\sqrt{n_1}\right)}{\sqrt{k+m_1} \sqrt{k+n_1}}
\end{equation}
which means that if $k_1,k_2\ge \sqrt{m_1n_1}$ or $k_1,k_2\le \sqrt{m_1 n_1}$ then we can use Lemma \ref{l_vdcorp} for the sequence $\arg (\hat \rho_\ell^4 \rho_\ell^4)$. (In the first case $\pi/2\le \arg (\hat \rho_\ell \rho_\ell)< \pi$ while in the second case $0< \arg (\hat \rho_\ell \rho_\ell)\le \pi/2$.) From the lemma we get
\[
|\sum_{\ell=\ell_1}^{\ell_2} \eta_\ell^{{{2}}}|\le c\left(|\rho_{\ell_1}^2 \hat \rho_{\ell_1}^2+1|^{-1} +|\rho_{\ell_2}^2 \hat \rho_{\ell_2}^2-1|^{-1}\right)
\]
and explicit computation together with $m_1>c n>c k$  gives
\[
|\rho_\ell^2 \hat \rho_\ell^2-1|^{-1}=\frac{\sqrt{k+m_1} \sqrt{k+n_1}}{2 \sqrt{k} \left(\sqrt{m_1}+\sqrt{n_1}\right)}\le c(1+n_1^{1/2} k^{-1/2}).
\]
This finishes the proof of the lemma in this case.\\
\noindent \textbf{Case 2: $\mu_n< \sqrt{m-n} , \Re\rho_\ell<0$}.\\ Now we have
\begin{equation}\label{roro2}
\rho_\ell \hat \rho_\ell=-\frac{k+\sqrt{m_1} \sqrt{n_1}}{\sqrt{k+m_1} \sqrt{k+n_1}}+\frac{i \sqrt{k} \left(\sqrt{m_1}-\sqrt{n_1}\right)}{\sqrt{k+m_1} \sqrt{k+n_1}}.
\end{equation}
meaning $\pi/2< \arg (\hat \rho_\ell \rho_\ell)< \pi$. Differentiation of the real part shows that $\arg\rho_\ell \hat \rho_\ell$ decreases if $k>\sqrt{m_1 n_1}$ and
then it increases. This means that we can apply Lemma \ref{l_vdcorp} for the sequences  $\arg (\hat \rho_\ell^2 \rho_\ell^2)$ or  $\arg (\hat \rho_\ell^4 \rho_\ell^4)$ if $k_1, k_2\ge \sqrt{n_1 m_1}$ and the reversed versions of these sequences if  $k_1, k_2\le \sqrt{n_1 m_1}$.
From (\ref{roro2}) we get
\begin{equation}\label{roro2bnd}
\pi\le \arg \rho_\ell^2 \hat \rho_\ell^2, \qquad 2\pi-\arg \rho_\ell^2 \hat \rho_\ell^2\ge  \frac{2 \sqrt{k} \left(\sqrt{m_1}-\sqrt{n_1}\right)}{\sqrt{k+m_1} \sqrt{k+n_1}} \ge c (k^{-1/2} n_1^{1/2}+1)^{-1}
\end{equation}
where we  used $\sqrt{m_1-n_1}\ge c \sqrt{m_1}$ which follows from $m_1>c n>c n_1$.
Applying Lemma \ref{l_vdcorp} to $\arg (\hat \rho_\ell^2 \rho_\ell^2)$ (or the reversed sequence) we get
\[
|\sum_{\ell=\ell_1}^{\ell_2} \eta_\ell|\le
\left\{
\begin{array}{ll}
c(n_1^{1/2} k_{{1}}^{-1/2}+1)&\textup{if } k_1, k_2\ge \sqrt{n_1 m_1},\\
c(n_1^{1/2} k_{{2}}^{-1/2}+1)&\textup{if } k_1, k_2\le \sqrt{n_1 m_1}.
\end{array}
\right.
\]
Noting that $k_1^{-1/2}\le k_2^{-1/2}$ this proves (\ref{F1}) in Case 2 if $k_1, k_2\le \sqrt{n_1m_1}$ or $k_1, k_2\ge \sqrt{n_1m_1}$. If $k_1>\sqrt{n_1m_1}$ and $k_{{2}}<\sqrt{n_1m_1}$ then we can cut the sum in to parts at $\sqrt{n_1m_1}$ and since  for $k\ge \sqrt{m_1 n_1}$ we have $n_1^{1/2} k^{-1/2} \le 1$ we have (\ref{F1}) in this case as well.

To prove (\ref{F2}) we  apply Lemma \ref{l_vdcorp} to $\arg (\hat \rho_\ell^4 \rho_\ell^4)$ (or its reversed) to get
\[
|\sum_{\ell=\ell_1}^{\ell_2} \eta_\ell^2|\le \left\{ \begin{array}{ll}
c\left(|\rho_{\ell_1}^2 \hat \rho_{\ell_1}^2+1|^{-1} +|\rho_{\ell_2}^2 \hat \rho_{\ell_2}^2-1|^{-1}\right)&\textup{if }k_1, k_2\ge \sqrt{n_1 m_1},\\[5pt]
c\left(|\rho_{\ell_2}^2 \hat \rho_{\ell_2}^2+1|^{-1} +|\rho_{\ell_1}^2 \hat \rho_{\ell_1}^2-1|^{-1}\right)&\textup{if } k_1, k_2 \le \sqrt{n_1 m_1}.
\end{array}\right.
\]
From this (\ref{F2}) follows by noting that
\[
|\rho_\ell^2 \hat \rho_\ell^2-1|^{-1}=\frac{\sqrt{k+m_1} \sqrt{k+n_1}}{2 \sqrt{k} \left(\sqrt{m_1}-\sqrt{n_1}\right)}\le c(n_1^{1/2} k^{-1/2}+1).
\]
where the first equality is explicit computation and the inequality is from (\ref{roro2bnd}).
\end{proof}

\end{document}